\numberwithin{equation}{section}
\newtheorem{theorem}{Theorem}[section]
\newtheorem{corollary}[theorem]{Corollary}
\newtheorem{lemma}[theorem]{Lemma}
\newtheorem{proposition}[theorem]{Proposition}
\theoremstyle{definition}
\newtheorem{definition}[theorem]{Definition}
\newtheorem{example}[theorem]{Example}
\newtheorem{remark}[theorem]{Remark}
\newcommand{\extp}{\@ifnextchar^\@extp{\@extp^{\,}}}
\def\@extp^#1{\mathop{\bigwedge\nolimits^{\!#1}}}
\newcommand*\bigcdot{\mathpalette\bigcdot@{.5}}
\newcommand*\bigcdot@[2]{\mathbin{\vcenter{\hbox{\scalebox{#2}{$\m@th#1\bullet$}}}}}
\newlist{steps}{enumerate}{1}
\setlist[steps, 1]{label = Step \arabic*.}
\begin{document}

\title{Symmetries and Vanishing Theorems for Symplectic Varieties}

\author[B.Tighe]{Benjamin Tighe}
\address{Department of Mathematics, University of Oregon}
\email{bentighe@uoregon.edu}

\maketitle

\begin{abstract}
We describe the local and Steenbrink vanishing problems for singular symplectic varieties with isolated singularities. We do this by constructing a morphism $$\mathbb D_X(\underline \Omega_X^{n+p}) \to \underline \Omega_X^{n+p}$$ for a symplectic variety $X$ of dimension $2n$ for $\frac{1}{2}\mathrm{codim}_X(X_{\mathrm{sing}}) < p$, where $\underline \Omega_X^k$ is the $k^{th}$-graded piece of the Du Bois complex and $\mathbb D_X$ is the Grothendieck duality functor.  We show this morphism is a quasi-isomorphism when $p = n-1$ and that this symmetry descends to the Hodge filtration on the intersection Hodge module.  As applications, we describe the higher Du Bois and higher rational properties for symplectic germs and the cohomology of primitive symplectic 4-folds.
\end{abstract}

\tableofcontents

\section{Introduction}

Let $X$ be a complex algebraic variety.  A good way to understand how well-behaved the singularities of $X$ are is to look for properties which hold for smooth varieties. For instance, we say $X$ has \textit{rational singularities} if \begin{equation} \label{equation rational singularities}
    \mathscr O_X \xrightarrow{\sim} \mathbf R\pi_*\mathscr O_{\widetilde X}
\end{equation} for any resolution of singularities $\pi$, generalizing Hironaka's solution \cite[pp. 144-145]{hironaka1964resolution} to a question of Grothendieck \cite[Problem B]{grothendieck1960cohomology}.  Rational singularities are a broad class which include the (Kawamata) log-terminal singularities, and the utility of these singularities is well-known in birational geometry.  There are also the Hodge theoretic singularities coming from Deligne's construction of the mixed Hodge structure \cite{deligne1974theorie}, \cite{du1981complexe}. 
 Associated to a complex algebraic variety $X$ is the \textit{Du Bois} complex $(\underline \Omega_X^\bullet, F)$, a filtered object whose graded quotients $\underline \Omega_X^p : = \mathrm{gr}_F^p\underline \Omega_X^{\bullet}[p]$ are objects $D^b(X)$.  It generalizes the holomorphic de Rham complex for complex manifolds (in particular, $\Omega_X^p \cong \underline \Omega_X^p$ if $X$ is smooth).  We say $X$ has \textit{Du Bois singularities} if $$\mathscr O_X \xrightarrow{\sim}\underline \Omega_X^0,$$ generalizing the morphism (\ref{equation rational singularities}).  Not only are rational singularities Du Bois \cite[Corollary 2.6]{kovacs1999rational}, \cite[5.4. Theorem]{saito2000mixed}, but they also include the log-canonical singularities \cite[Theorem 1.4]{kollar2010log}, leading to many useful applications in the study of higher dimensional varieties and moduli spaces.

In a different direction, one can measure a singularity by the kinds of vanishing theorems it exhibits.  For any normal complex algebraic variety, there is the Grauert-Riemenschneider vanishing theorem, which states $\mathbf R\pi_*\omega_{\widetilde X} \cong_{\mathrm{qis}} \pi_*\omega_{\widetilde X}$ for any resolution of singularities $\pi$ \cite{grauert1970}.  A vast generalization of this is the Steenbrink vanishing theorem \cite[Theorem 2]{steenbrink1985vanishing}: if $\pi:\widetilde X \to X$ is a \textit{log}-resolution of singularities, then \begin{equation} \label{equation steenbrink vanishing}
   R^j\pi_*\Omega_{\widetilde X}^k(\log E)(-E) : =  R^j\pi_*\Omega_{\widetilde X}^k(\log E)\otimes \mathscr I_E = 0, \quad j + k > \dim X,
\end{equation} where $\mathscr I_E$ is the ideal sheaf of the exceptional divisor $E$.  The vanishing does not hold for $j + k \le \dim X$, but usually the (non-)vanishing of these sheaves reveals something about the singularities.  For example, the edge case
\begin{equation} \label{equation steenbrink vanishing extended}
    R^{\dim X-1}\pi_*\Omega_{\widetilde X}^1(\log E)(-E) = 0
\end{equation} holds whenever $X$ has Du Bois singularities \cite[Theorem 14.1]{greb2011differential}.  In particular, this vanishing holds when $X$ has rational singularities, but it is unclear if (\ref{equation steenbrink vanishing}) can be extended further.  Instead, rational singularities satisfy another kind of vanishing, which we refer to as a \textit{local vanishing theorem}, conjectured in \cite{mustactua2020local} and proved in \cite[Theorem 1.10]{kebekus2021extending}: \begin{equation}\label{equation local vanishing rational singularities}
    R^{\dim X-1}\pi_*\Omega_{\widetilde X}^1(\log E) = 0.
\end{equation} The (non-)vanishing of the cohomology sheaves $R^j\pi_*\Omega_{\widetilde X}^k(\log E)$ often corresponds to some intrinsic description of the variety $X$, leading to many interesting results for hypersurfaces \cite{mustata2021bois}, local complete intersections \cite{mustata2021hodge} toric varieties \cite{shen2024local}, and Calabi-Yau varieties \cite{friedman2024kliminal}. 

The extension of Steenbrink vanishing (\ref{equation steenbrink vanishing extended}) and the local vanishing theorem (\ref{equation local vanishing rational singularities}) are important pieces of the holomorphic extension property for rational and Du Bois singularities.  Given an algebraic variety $X$ with regular locus $U$, it is important to know for which $k$ is the natural inclusion \begin{equation}\label{equation holomorphic extension}
    \pi_*\Omega_{\widetilde X}^k \hookrightarrow j_*\Omega_U^k
\end{equation} is an isomorphism for every resolution of singularities $\pi:\widetilde X \to X$.  This problem has been extensively studied in terms of the Hodge theoretic singularities \cite{greb2011differential}, \cite{kebekus2021extending}, \cite{park2023bois}, \cite{tighe2023holomorphic} with many applications, including the existence of functorial pull-back for rational singularities \cite{kebekus2013pullback}, \cite{kebekus2021extending}.  However, holomorphic extension is typically a weak condition, as (\ref{equation holomorphic extension}) will occur in low degree for topological reasons \cite[Theorem]{flenner1988extendability}, and there are examples of bad (i.e., log-canonical) singularities for which holomorphic extension holds in all degrees \cite[Example A.4]{kebekus2021extending}.  

\subsection{Derived symmetries for symplectic varieties} \label{subsection derived symmetries}
In this paper, we investigate the interaction between Hodge theory and vanishing theorems for a class of singularities with a ``strong'' holomorphic extension property: symplectic singularities.  We say that a normal complex variety $X$ is a \textit{symplectic variety} if there is a closed, non-degenerate form $\sigma \in H^0(U, \Omega_{U}^2)$ on the regular locus $U$ which extends to a (possibly degenerate) 2-form $\widetilde \sigma$ on some resolution of singularities $\pi:\widetilde X \to X$.  Such singularities are Gorenstein and rational \cite[Proposition 1.3]{beauville2000symplectic}\footnote{Beauville quotes a result of Miles Reid, which seems to be Kempf's criterion: $X$ has rational singularities if and only if it is Cohen-Macaulay and $\pi_*\omega_{\widetilde X}$ is reflexive.}. This is quite strong: the extension of a non-degenerate 2-form implies that \textit{all} differential forms extend holomorphically by \cite[Corollary 1.8]{kebekus2021extending}. 

To motivate our main theorem, we give a suggestive proof of Beauville's result using a derived form of symmetry.  Since $\sigma$ extends holomorphically, we have a composition \begin{equation}\label{equation symplectic composition}\mathscr O_X \to \mathbf R\pi_*\mathscr O_{\tilde X} \xrightarrow{L_\sigma^n} \mathbf R\pi_*\omega_{\tilde X} \xrightarrow{\sim} \pi_*\omega_{\tilde X} \hookrightarrow j_*\omega_{U},\end{equation} where $2n$ is the dimension, and $L_\sigma$ is the morphism obtained by wedging with $\widetilde \sigma$ and applying $\mathbf R\pi_*$. By definition, this composition $\mathscr O_X \to j_*\omega_U$ is an isomorphism, and $\mathscr O_X \to \mathbf R\pi_*\mathscr O_{\widetilde X}$ has a left inverse.  Therefore, $X$ has rational singularities by \cite[Theorem 1.1]{kovacs2000characterization}.  Indeed, since $\mathscr O_X \cong \pi_*\omega_{\widetilde X}$, dualizing the composition (\ref{equation symplectic composition}) gives a quasi-isomorphism \begin{equation} \label{equation symplectic composition dual}
    \mathbf R\pi_*\mathscr O_{\widetilde X}\to \mathbf R\pi_*\omega_{\tilde X} \to \mathbf R\pi_*\mathscr O_{\tilde X}
\end{equation} Since the middle term has no higher cohomology, $R^j\pi_*\mathscr O_{\tilde X} = 0$ for $j > 0$.

A common opinion is that symplectic varieties are among the most well-behaved singular varieties, and the goal of this paper is investigate the following motivating question: how does the symmetry \begin{equation} \label{equation symplectic symmetry regular locus}
    \Omega_U^{n-p} \xrightarrow{\sim} \Omega_U^{n+p}
\end{equation} obtained by wedging with $\sigma$ extend to the Du Bois complex? Of course, it's natural to guess there is a quasi-isomorphism of the form $\underline \Omega_X^{n-p} \cong_{\mathrm{qis}} \underline \Omega_X^{n+p}$, and we note that this does hold in degree 0.  Surprisingly, this will usually fail whenever $p \ne n$.  While there is a natural morphism $\underline \Omega_X^{n-p} \to \underline \Omega_X^{n+p}$, these complexes are not connected by Grothendieck duality, a necessary piece of our proof of (\ref{equation symplectic composition dual}).  The correct generalization is to study a natural map $$L_\sigma^p: \mathbb D_X(\underline \Omega_X^{n+p}) \to \underline \Omega_X^{n+p},$$ where $\mathbb D_X(-) : = \mathbf R\mathscr Hom_{\mathscr O_X}(-, \omega_X^\bullet)[-2n]$ is the Grothendieck duality functor. Our main theorem considers the case when $p = n-1$:

\begin{theorem} \label{theorem main}
Let $X$ be a symplectic variety of dimension $2n$.  The morphism $$L_\sigma^{n-1}: \mathbb D_X(\underline \Omega_X^{2n-1}) \to \underline \Omega_X^{2n-1}$$ is a quasi-isomorphism.
\end{theorem}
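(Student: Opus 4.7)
The plan is to translate the statement to a fixed log resolution and reduce it, via Grothendieck duality, to a sheaf-level wedge computation; then show the induced sheaf map has a cone whose pushforward vanishes. Fix a log resolution $\pi:\widetilde X \to X$ with simple normal crossings exceptional divisor $E$; by the definition of symplectic variety, $\sigma$ extends to a holomorphic 2-form $\widetilde\sigma \in H^0(\widetilde X, \Omega_{\widetilde X}^2)$. Because $X$ has rational singularities, the standard Steenbrink identification
$$\underline \Omega_X^{2n-1} \cong \mathbf R \pi_* \Omega_{\widetilde X}^{2n-1}(\log E)$$
holds, and applying Grothendieck--Serre duality for the proper map $\pi$ (using $\pi^!\omega_X^\bullet \cong \omega_{\widetilde X}^\bullet$) together with the classical perfect pairing $\Omega_{\widetilde X}^p(\log E)\otimes \Omega_{\widetilde X}^{2n-p}(\log E)(-E) \to \omega_{\widetilde X}$ identifies the dual side as
$$\mathbb D_X\bigl(\underline \Omega_X^{2n-1}\bigr) \cong \mathbf R \pi_* \Omega_{\widetilde X}^{1}(\log E)(-E).$$
Under these identifications, $L_\sigma^{n-1}$ is induced by the sheaf-level wedge map
$$\alpha:\ \Omega_{\widetilde X}^{1}(\log E)(-E) \xrightarrow{\wedge \widetilde\sigma^{n-1}} \Omega_{\widetilde X}^{2n-1}(\log E),$$
so the theorem becomes the claim that $\mathbf R\pi_*\alpha$ is a quasi-isomorphism.

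On the open set $\widetilde X \setminus E \cong U$, the form $\widetilde\sigma$ is non-degenerate and $\alpha$ restricts to the classical Lefschetz isomorphism $\Omega_U^1 \xrightarrow{\sim} \Omega_U^{2n-1}$ induced by wedging with $\sigma^{n-1}$. Consequently the cone $C$ of $\alpha$ is supported on $E$, and the task reduces to showing $\mathbf R \pi_* C \simeq 0$.

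The main obstacle, and the heart of the argument, is precisely this pushforward vanishing. My strategy is two-pronged. First, I would control the local degeneracy of $\widetilde\sigma$ along $E$ by exploiting that symplectic singularities are terminal: the discrepancy divisor $K_{\widetilde X/X}=\sum a_i E_i$ has all $a_i \ge 1$, which fixes the order of vanishing of $\widetilde\sigma^n = \pi^*\sigma^n$ along each $E_i$ and, via a rank-stratified Darboux-type normal form for $\widetilde\sigma$ near $E$, gives an explicit description of $\ker\alpha$ and $\coker\alpha$ in terms of the stratification of $E$ by the rank of $\widetilde\sigma$. Second, I would invoke Steenbrink's vanishing theorem $R^j\pi_*\Omega_{\widetilde X}^k(\log E)(-E)=0$ for $j+k > 2n$, which is precisely matched to the twist on the source of $\alpha$, to kill the higher cohomology sheaves of $C$. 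A further structural input is that the perfect pairing used above makes $L_\sigma^{n-1}$ formally self-dual (applying $\mathbb D_X$ recovers the same map up to biduality), so vanishing of the cone on one side dualizes to vanishing on the other. I expect the delicate piece to be the first ingredient: the explicit local analysis of $\alpha$ along the strata of $E$ where $\widetilde\sigma$ is maximally degenerate, since this is where the extension of the symplectic form fails to be a pointwise isomorphism and where the twist by $-E$ on the source of $\alpha$ must pick up exactly the needed cokernel.
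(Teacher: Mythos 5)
There is a genuine gap, and it occurs at the very first step: your Grothendieck duality identifications are swapped, and the swap turns the theorem into a statement that is false in general. Since $\underline \Omega_\Sigma^{2n-1} = 0$ (as $\dim \Sigma \le 2n-2$), the exact triangle $\mathbf R\pi_*\Omega_{\widetilde X}^{2n-1}(\log E)(-E) \to \underline \Omega_X^{2n-1} \to \underline \Omega_\Sigma^{2n-1} \xrightarrow{+1}$ gives $\underline \Omega_X^{2n-1} \cong \mathbf R\pi_*\Omega_{\widetilde X}^{2n-1}(\log E)(-E)$, and hence $\mathbb D_X(\underline \Omega_X^{2n-1}) \cong \mathbf R\pi_*\Omega_{\widetilde X}^{1}(\log E)$: the $(-E)$-twist belongs on the \emph{target} of $L_\sigma^{n-1}$, not on the source. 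The map you actually analyze, $\mathbf R\pi_*\bigl(\Omega_{\widetilde X}^{1}(\log E)(-E) \to \Omega_{\widetilde X}^{2n-1}(\log E)\bigr)$, is instead a model of the natural map $\underline \Omega_X^1 \to \mathbb D_X(\underline \Omega_X^{2n-1})$, i.e.\ of $1$-rationality, and that map is \emph{not} a quasi-isomorphism in general: if $X$ admits a symplectic resolution $\phi: Z \to X$, then $\mathbb D_X(\underline \Omega_X^{2n-1}) \cong \mathbf R\phi_*\Omega_Z^1$ has $\mathscr H^1 \ne 0$ (Corollary \ref{corollary vanishing symplectic resolution}), while your source is a sheaf. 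So no amount of work on the cone of your $\alpha$ can prove the theorem; you must first put the twist on the correct side.

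Even after correcting the twist, the two inputs you propose for the cone vanishing do not hold up. Symplectic singularities are canonical but \emph{not} terminal in general (terminality is equivalent to $\mathrm{codim}_X \Sigma \ge 4$), and discrepancies can be zero --- already for ADE surface singularities, or for any $X$ with a crepant resolution --- so the bound $a_i \ge 1$ is unavailable; moreover there is no rank-stratified Darboux normal form for the degenerate closed $2$-form $\widetilde\sigma$ along $E$, so the ``explicit description of $\ker\alpha$ and $\mathrm{coker}\,\alpha$'' that you flag as the delicate piece is exactly where all the content would have to live, and it is not supplied. The paper's proof avoids any local analysis of $\widetilde\sigma$ near $E$: it compares $L_\sigma^{n-1}$ with its own Grothendieck dual via a morphism of triangles built from the truncation $\mathscr H^0 K \to K \to \tau_{\ge 1}K$; the bottom row is the isomorphism $(\sigma^{n-1})^*$ between the duals of the reflexive sheaves $\pi_*\Omega_{\widetilde X}^1 \cong \pi_*\Omega_{\widetilde X}^{2n-1}$ coming from holomorphic extension of forms; the truncated pieces have cohomology supported on $\Sigma$, so Ischebeck's lemma together with Steenbrink vanishing forces the maps $R^jL_\sigma^{n-1}$ to be injective for $j < 2n-1$; and the argument closes with the external input $R^{2n-1}\pi_*\Omega_{\widetilde X}^1(\log E) = 0$, the local vanishing theorem for rational singularities, which your proposal never invokes. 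For non-isolated $\Sigma$ one additionally needs Kaledin's local product decomposition and induction on dimension to reduce the relevant supports to dimension zero. None of these ingredients appear in the proposal, so it does not constitute a proof.
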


\subsection{Vanishing theorems, higher Du Bois and higher rational singularities}

Since a symplectic variety $X$ satisfies the holomorphic extension property in all degrees, there is an isomorphism $$\pi_*\Omega_{\widetilde X}^{n-p} \cong \pi_*\Omega_{\widetilde X}^{n+p}$$ for every $0 \le p \le n$ (Proposition \ref{proposition reflexivity and symmetry of h^0's}).  To prove Theorem \ref{theorem main}, we study the commutative diagram \[ \begin{tikzcd}
    \mathbb D_X(\underline \Omega_X^{n+p}) \arrow{r} \arrow{d}{L_\sigma^p} & \mathbf R\mathscr Hom_{\mathscr O_X}(\pi_*\Omega_{\widetilde X}^{n+p}, \mathscr O_X) \arrow{d}{(\sigma^p)^*}\\ \underline \Omega_X^{n+p} \arrow{r} & \mathbf R\mathscr Hom_{\mathscr O_X}(\pi_*\Omega_{\widetilde X}^{n-p}, \mathscr O_X)
\end{tikzcd}.\] Since $X$ is symplectic, the vertical morphism $(\sigma^p)^*$ is an isomorphism.  By the Steenbrink vanishing theorem (\ref{equation steenbrink vanishing general}), we are able to deduce the induce maps $\mathscr H^j\mathbb D_X(\underline \Omega_X^{n+p}) \to \mathscr H^j\underline \Omega_X^{n+p}$ are injective for most $j$, and Theorem \ref{theorem main} follows as a special case of this analysis.  More generally, we are able to extend the local and Steenbrink vanishing theorems for symplectic varieties. 

\begin{theorem} \label{theorem local and steenbrink vanishing intro}

Let $X$ be a symplectic variety of dimension $2n$ with isolated singularities, and let $\pi:\widetilde X \to X$ a resolution of singularities.  

\begin{enumerate}
    \item For every $0 < p \le n$, we have $$R^j\pi_*\Omega_{\widetilde X}^{n-p}(\log E) = 0$$ for $n-p < j < n+p$.  Moreover, \begin{equation} \label{equation local vanishing degree 1 symplectic}
   R^j\pi_*\Omega_{\widetilde X}^1(\log E) = 0, \quad j > 1.\end{equation}

    \item For every $0 < p \le n$, we have $$R^j\pi_*\Omega_{\widetilde X}^{n+p}(\log E) = 0$$ for $n-p < j < n+p-1$.

    \item For every $0 < p \le n$, we have $$R^j\pi_*\Omega_{\widetilde X}^{n-p}(\log E)(-E) = 0$$ for $j \ge n-p$.  In particular, $$R^j\pi_*\Omega_{\widetilde X}^1(\log E)(-E) = 0, \quad j > 0.$$
\end{enumerate}   
\end{theorem}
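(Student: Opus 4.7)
The plan is to derive all three vanishings from the cohomological analysis of $L_\sigma^p$ used to prove Theorem \ref{theorem main}, combined with Grothendieck duality on a log resolution and classical Steenbrink vanishing.

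\emph{Dictionary.} For a log resolution $\pi:\widetilde X \to X$, since $X$ has isolated rational singularities, standard arguments identify $\mathscr H^j\underline\Omega_X^k \cong R^j\pi_*\Omega_{\widetilde X}^k(\log E)(-E)$ for $j>0$, while Grothendieck duality applied to the perfect pairing $\Omega_{\widetilde X}^k(\log E)(-E)\otimes \Omega_{\widetilde X}^{2n-k}(\log E)\to \omega_{\widetilde X}$ gives
$$\mathbb D_X(\underline\Omega_X^k)\;\cong_{\mathrm{qis}}\;R\pi_*\Omega_{\widetilde X}^{2n-k}(\log E).$$
Under this dictionary, (1) becomes the vanishing of $\mathscr H^j\mathbb D_X(\underline\Omega_X^{n+p})$ for $n-p<j<n+p$, (2) the vanishing of $\mathscr H^j\mathbb D_X(\underline\Omega_X^{n-p})$ for $n-p<j<n+p-1$, and (3) the vanishing of $\mathscr H^j\underline\Omega_X^{n-p}$ for $j\ge n-p$.

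\emph{Symmetry and Steenbrink.} The commutative square in \S\ref{subsection derived symmetries} has $(\sigma^p)^*$ as a right vertical isomorphism by the symplectic holomorphic extension property, and classical Steenbrink vanishing (\ref{equation steenbrink vanishing}) kills its bottom right entry in degrees $j>n-p$. A diagram chase as sketched after Theorem \ref{theorem main} then forces $\mathscr H^j L_\sigma^p$ to be injective for $n-p<j<n+p$. Via the dictionary, this injectivity translates into injectivity of the wedge-with-$\sigma^p$ map
$$R^j\pi_*\Omega_{\widetilde X}^{n-p}(\log E)\;\longrightarrow\; R^j\pi_*\Omega_{\widetilde X}^{n+p}(\log E)(-E),$$
whose target already vanishes in the relevant range by Steenbrink, yielding (1). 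Statement (2) follows by running the Grothendieck-dual argument with $\mathbb D_X L_\sigma^p$ in place of $L_\sigma^p$; the one-degree shrinkage of the vanishing range is a direct reflection of the asymmetric Steenbrink cutoff between $\underline\Omega_X^{n+p}$ and $\underline\Omega_X^{n-p}$. For (3), Steenbrink directly handles $j>n+p$, and the remaining range is obtained by combining (1) with the standard exact sequences relating $\Omega_{\widetilde X}^{n-p}(\log E)(-E)$ and $\Omega_{\widetilde X}^{n-p}(\log E)$ on $\widetilde X$, together with the fact that the cokernel is supported on $E$ with vanishing higher direct images in the relevant range by dimension counting on the exceptional fiber. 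The low-degree $\Omega^1$ statements then fall out as the $p=n-1$ cases of (1) and (3).

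\emph{Main obstacle.} The hardest step will be pinning down the precise range of $j$ on which $\mathscr H^j L_\sigma^p$ is injective, especially its Grothendieck-dual counterpart needed for (2) (which must come with a one-degree smaller range), and verifying that the identifications in the dictionary hold at the boundary $j=0$ where a correction may be required. Once these are in place, the combination of Steenbrink vanishing on the log resolution and the symplectic symmetry encoded by $L_\sigma^p$ takes care of the rest.
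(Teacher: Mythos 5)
Your reduction of part (1) to injectivity of $\mathscr H^jL_\sigma^p$ via the duality diagram, Ischebeck's lemma on the zero-dimensional supports, and classical Steenbrink vanishing is exactly the paper's argument (Step 1 of Theorem \ref{lemma local vanishing isolated singularities}), so that portion is fine, modulo the remark that the $j=2n-1$ case of \eqref{equation local vanishing degree 1 symplectic} is not in the range $n-p<j<n+p$ and is imported separately from the local vanishing theorem \eqref{equation local vanishing rational singularities} for rational singularities. The problems are with (2) and, above all, (3).

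For (3), your plan --- combine (1) with the residue sequence relating $\Omega_{\widetilde X}^{n-p}(\log E)(-E)$ and $\Omega_{\widetilde X}^{n-p}(\log E)$, and kill the cokernel ``by dimension counting on the exceptional fiber'' --- does not work. The cokernel is supported on the divisor $E$, whose fiber over an isolated singular point can have dimension $2n-1$, so dimension counting gives no vanishing of its higher direct images in the range $n-p\le j\le n+p$; even the injectivity of $R^j\pi_*\Omega_{\widetilde X}^{n-p}(\log E)(-E)\to R^j\pi_*\Omega_{\widetilde X}^{n-p}(\log E)$ is not free and is obtained in the paper by an induction on the form degree using the two filtered spectral sequences of Friedman--Laza. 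More importantly, even granting that injectivity, (1) only covers the \emph{open} range $n-p<j<n+p$; the edge cases $j=n-p$ and $j=n+p$ (which are precisely what upgrade classical Steenbrink $j>n+p$ to $j\ge n-p$) occupy Steps 3--5 of Theorem \ref{lemma steenbrink vanishing isolated singularities} and require a genuinely different mechanism: the exactness of the twisted log de Rham complex, the short exact sequences \eqref{equation short exact sequence log zero sheaves} for the sheaves $d\Omega_{\widetilde X}^k(\log E)(-E)$, compatibility of $\wedge\widetilde\sigma$ with $d$, iterated Steenbrink vanishing, and Grauert--Riemenschneider. None of this appears in your sketch. Finally, your derivation of (2) by ``the Grothendieck-dual argument'' hides a logical dependency: in the diagram \eqref{equation commutative diagram log E to log E} the top corner $\mathbb D_X(\tau_{\ge 1}\mathbf R\pi_*\Omega_{\widetilde X}^{n-p}(\log E)(-E))$ is controlled in the needed degrees only because part (3) forces $\tau_{\ge 1}\mathbf R\pi_*\Omega_{\widetilde X}^{n-p}(\log E)(-E)$ to be concentrated in degrees $<n-p$; classical Steenbrink alone leaves its dual with possible cohomology throughout $j\ge n-p$, which destroys the comparison in exactly the range you need. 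So (2) must be proved \emph{after} (3), and the ``one-degree shrinkage'' is a consequence of that input rather than of the asymmetric Steenbrink cutoff alone.
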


The proof of local vanishing is an immediate consequence of Theorem \ref{theorem main}: under the restrictions on the singularities, there is a quasi-isomorphism $\mathbf R\pi_*\Omega_{\widetilde X}^{n-p}(\log E) \xrightarrow{\sim} \mathbb D_X(\underline \Omega_X^{n+p})$ (\S \ref{subsection du bois}).  The generalization of Steenbrink vanishing for isolated requires more work, but it depends heavily on the local vanishing theorem.

If $X$ has isolated singularities, our Steenbrink vanishing theorem can also be rephrased in terms of the Du Bois complex. Extending the notion established in \cite{mustata2021hodge} and \cite{jung2021higher} for hypersurfaces, we say that a symplectic variety $X$ is $k$-Du Bois if the natural map $$\Omega_X^{[p]} \to \underline \Omega_X^p$$ is a quasi-isomorphism for every $0 \le p \le k$, where $\Omega_X^{[p]}$ is the sheaf of reflexive $p$-forms\footnote{In general, there is no morphism $\Omega_X^{[p]} \to \underline \Omega_X^p$, but this is the case for rational singularities.  There is always a morphism $\Omega_X^p \to \underline \Omega_X^p$, where $\Omega_X^p$ is the sheaf of K\"ahler $p$-forms, and the $k$-Du Bois property for hypersurfaces requires these maps to be isomorphisms for $0 \le p \le k$.  However, terminal symplectic varieties are never lci \cite[Proposition 1.4]{beauville2000symplectic}, and $\Omega_X^p$ fails to be reflexive for even the nicest symplectic singularities, see also Proposition \ref{proposition reflexive kahler differentials}.}.  Therefore, symplectic germs are $1$-Du Bois.  There is also a dual generalization for rational singularities, extending the identification $\mathbb D_X(\mathbf R\pi_*\omega_{\widetilde X}) \cong \mathbf R\pi_*\mathscr O_{\widetilde X}$: we say that $X$ is $k$-rational if $$\Omega_X^{[p]} \to \mathbb D_X(\underline \Omega_X^{2n-p})$$ is a quasi-isomorphism for every $0 \le p \le k$.  The relationship between $k$-rational and $k$-Du Bois singularities is well-understood.  Given a variety $X$ with (0-)rational singularities, the $k$-rational property implies the $k$-Du Bois property \cite[Theorem B]{shen2023k}, and this relationship can be extended in special cases; for instance, using the description of the higher Du Bois property in terms of the minimal exponent, $k$-Du Bois hypersurfaces are $(k-1)$-rational \cite{mustata2021bois}, \cite{jung2021higher}.  For symplectic singularities, we get the following converse:

\begin{theorem} \label{theorem k db k rational}
Let $X$ be a symplectic variety of dimension $2n$ with isolated singularities.  If $X$ is $k$-rational, then $X$ is $(k+1)$-Du Bois; if $X$ is $n$-rational, then $X$ is $2n$-rational.
\end{theorem}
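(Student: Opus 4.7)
My plan is to reduce both statements to vanishing facts on a log resolution $\pi \colon \widetilde X \to X$ with SNC exceptional divisor $E$, via the identifications
\[ \underline\Omega_X^p \cong \mathbf R\pi_*\Omega_{\widetilde X}^p(\log E)(-E), \qquad \mathbb D_X(\underline\Omega_X^{2n-p}) \cong \mathbf R\pi_*\Omega_{\widetilde X}^p(\log E), \]
the first being the standard Steenbrink presentation and the second following from Grothendieck duality, the perfect pairing $\Omega^p(\log E)\otimes\Omega^{2n-p}(\log E)\to\omega_{\widetilde X}(E)$, and the fact $\omega_X\cong\mathscr O_X$. Under this dictionary, $k$-rationality is equivalent to $R^j\pi_*\Omega_{\widetilde X}^p(\log E)=0$ for $j>0$, $p\le k$; $(k+1)$-Du Boisness to the analogous vanishing for $\Omega_{\widetilde X}^p(\log E)(-E)$ in the range $p\le k+1$; and $2n$-rationality to $k$-rationality extended to all $p$.

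For the first implication, Shen's theorem applied to the $k$-rational (hence $0$-rational) variety $X$ already produces the $k$-Du Bois property, so I only need the single new vanishing $R^j\pi_*\Omega_{\widetilde X}^{k+1}(\log E)(-E)=0$ for all $j>0$, together with the degree-zero identification, which follows from the strong holomorphic extension property for symplectic varieties. Theorem \ref{theorem local and steenbrink vanishing intro}(3) handles $j\ge k+1$. For the intermediate range $0<j<k+1$ I would combine the short exact sequence
\[ 0 \to \Omega_{\widetilde X}^{k+1}(\log E)(-E) \to \Omega_{\widetilde X}^{k+1}(\log E) \to \Omega_{\widetilde X}^{k+1}(\log E)\big|_E \to 0 \]
with its long exact sequence under $\mathbf R\pi_*$. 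The restriction term $\mathbf R\pi_*\bigl(\Omega_{\widetilde X}^{k+1}(\log E)|_E\bigr)$ is supported over the isolated singular points and decomposes via the residue filtration into pieces of shape $\mathbf R\pi_*\Omega_{E^{(\ell)}}^{k+1-\ell}$, which should be controllable by the $k$-rational vanishings at lower levels together with standard Kodaira--Akizuki--Nakano vanishing on the projective strata of $E$.

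For the second implication, I would chain four quasi-isomorphisms. An extension of Theorem \ref{theorem main} provides $L_\sigma^q\colon \mathbb D_X(\underline\Omega_X^{n+q})\xrightarrow{\sim}\underline\Omega_X^{n+q}$ in the appropriate range; the $n$-rational hypothesis at level $n-q$ gives $\Omega_X^{[n-q]}\cong \mathbb D_X(\underline\Omega_X^{n+q})$; the Lefschetz-type symplectic symmetry gives $\Omega_X^{[n-q]}\cong\Omega_X^{[n+q]}$ via $\sigma^{q}$; and the $n$-Du Bois identity gives $\Omega_X^{[n-q]}\cong\underline\Omega_X^{n-q}$ (guaranteed by iterating Part 1 or by Shen's theorem). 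Concatenating these yields $\underline\Omega_X^{n-q}\cong\underline\Omega_X^{n+q}$, whence $\mathbb D_X(\underline\Omega_X^{n-q})\cong \mathbb D_X(\underline\Omega_X^{n+q})\cong\Omega_X^{[n+q]}$, which is precisely the $2n$-rational identity at level $n+q>n$.

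The main obstacle I anticipate is the intermediate-range vanishing in the first part: $R^j\pi_*\Omega_{\widetilde X}^{k+1}(\log E)(-E)=0$ for $0<j<k+1$ is not covered by Steenbrink vanishing or by the $k$-rational hypothesis directly, and the residue-filtration strategy depends on the exceptional fibers supporting enough vanishing to chase each piece back to an already established statement. Part 2, by contrast, is a clean four-step concatenation once the extension of Theorem \ref{theorem main} is in hand.
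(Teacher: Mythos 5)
Your reduction in Part 1 to the single new vanishing $R^j\pi_*\Omega_{\widetilde X}^{k+1}(\log E)(-E)=0$ for $j>0$ is correct, and $j\ge k+1$ is indeed covered by Theorem \ref{lemma steenbrink vanishing isolated singularities}; but your strategy for the range $0<j<k+1$ has a genuine gap, at exactly the point you flag. Two problems. First, Kodaira--Akizuki--Nakano vanishing on the strata $E^{(\ell)}$ requires an ample twist; untwisted, the groups $H^q(E^{(\ell)},\Omega_{E^{(\ell)}}^{k+1-\ell})$ are just Hodge numbers of projective varieties and do not vanish in general (e.g.\ $H^{p,p}\neq 0$). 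Second, and more fundamentally, even with complete control of the restriction term, the long exact sequence for $0\to\Omega_{\widetilde X}^{k+1}(\log E)(-E)\to\Omega_{\widetilde X}^{k+1}(\log E)\to\Omega_{\widetilde X}^{k+1}(\log E)|_E\to 0$ only compares $R^j\pi_*\Omega_{\widetilde X}^{k+1}(\log E)(-E)$ with $R^j\pi_*\Omega_{\widetilde X}^{k+1}(\log E)$, and the latter is not known to vanish for $0<j\le k+1$: the $k$-rational hypothesis stops at level $k$, and Theorem \ref{lemma local vanishing isolated singularities} only gives vanishing for $k+1<j<2n-k-1$. So the argument is circular at the one level that matters. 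The paper's mechanism is different: it shows the composite $R^j\pi_*\Omega_{\widetilde X}^{k+1}(\log E)(-E)\to R^j\pi_*\Omega_{\widetilde X}^{k+1}(\log E)\to R^j\pi_*\Omega_{\widetilde X}^{2n-k-1}(\log E)(-E)$ is simultaneously injective and zero. Injectivity of the first arrow comes from the comparison of the truncated log-de Rham spectral sequences $E_1^{k+1}$ and $'E_1^{k+1}$ in the proof of Theorem \ref{lemma steenbrink vanishing isolated singularities}, which is where the $k$-rational vanishing at lower levels enters; injectivity of the second arrow is Step 1 of Theorem \ref{lemma local vanishing isolated singularities}. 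Vanishing of the composite is Remark \ref{remark zero map}: since $\widetilde\sigma$ is closed and every local section of $\Omega_{\widetilde X}^{2n-k-1}(\log E)(-E)$ is divisible by $\widetilde\sigma^{n-k-1}$, the image of the wedge map factors through $R^j\pi_*(d\Omega_{\widetilde X}^{k-1}(\log E)(-E))$, which dies by the $k$-Du Bois property you already have from Shen's theorem. This ``injective and zero'' sandwich is the missing idea in your proposal.

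Your Part 2, by contrast, is essentially sound and is a cleaner packaging than the paper's. The one step you black-box --- that $L_\sigma^q:\mathbb D_X(\underline\Omega_X^{n+q})\to\underline\Omega_X^{n+q}$ is a quasi-isomorphism for all $q$ under the $n$-rational hypothesis --- is not available as a stated theorem, but it does follow from Step 2 of Theorem \ref{lemma local vanishing isolated singularities}: $L_\sigma^q$ is a quasi-isomorphism if and only if $R^j\pi_*\Omega_{\widetilde X}^{n-q}(\log E)=0$ for $j\ge n+q$, and $n$-rationality makes $\mathbb D_X(\underline\Omega_X^{n+q})\cong\mathbf R\pi_*\Omega_{\widetilde X}^{n-q}(\log E)$ a sheaf, so this vanishing is automatic. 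You should say this explicitly. The paper instead shows every $\underline\Omega_X^p$ is a sheaf (using the spectral sequence of Corollary \ref{corollary extra vanishing} for the middle degree $n$) and then dualizes through the diagrams (\ref{equation commutative diagram}) and (\ref{equation commutative diagram log E to log E}); the two routes buy the same conclusion.
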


It would be interesting to know if symplectic singularities with isolated singularities are $(n-1)$-Du Bois, so that Theorem \ref{theorem k db k rational} is just a special case.  It is not true a symplectic variety is $k$-Du Bois for every $k$.  By Theorem \ref{theorem main}, $\underline \Omega_X^{2n-1}$ is a sheaf if and only if $\mathscr H^1\mathbb D_X(\underline \Omega_X^{2n-1}) = 0$.  Symplectic varieties of this kind of very special, and there is a connection between this vanishing and symplectic resolutions of singularities:

\begin{proposition}
Let $X$ be a terminal symplectic variety of dimension $2n$ satisfying $$\mathscr H^1\mathbb D_X(\underline \Omega_X^{2n-1}) = 0.$$  Then $X$ does not admit a proper symplectic resolution of singularities.
\end{proposition}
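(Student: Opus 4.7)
The plan is to argue by contradiction: assume $\pi : \widetilde X \to X$ is a proper symplectic resolution. The first step is a translation. By Theorem \ref{theorem main}, the quasi-isomorphism $L_\sigma^{n-1}$ converts the hypothesis $\mathscr H^1\mathbb D_X(\underline \Omega_X^{2n-1}) = 0$ into $\mathscr H^1 \underline \Omega_X^{2n-1} = 0$, so $\underline \Omega_X^{2n-1}$ is concentrated in degree $0$ (as already observed in the paragraph preceding the proposition). Hence it suffices to exhibit a nonzero class in $\mathscr H^1 \underline \Omega_X^{2n-1}$ under the existence of $\pi$.

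Next I would exploit the geometry of $\pi$. Because $X$ is terminal and $\pi$ is crepant, $\pi$ cannot have any exceptional divisor, so it is a \emph{small} resolution; by Kaledin--Fu it is moreover semi-small, and the generic fiber over a component $Z \subset X_{\mathrm{sing}}$ of codimension $2k$ is a projective Lagrangian subvariety of dimension $k \geq 2$. Any such fiber $F_z$ is positive-dimensional and projective, so after passing to a resolution it carries a nonzero Hodge $(1,1)$-class, and Grauert's theorem on formal functions then produces a nonzero germ of $R^1\pi_*\Omega^1_{\widetilde X}$ along $Z$. Using the symplectic form $\widetilde\sigma$ on $\widetilde X$, the isomorphism $\wedge\,\widetilde\sigma^{n-1}: \Omega^1_{\widetilde X} \xrightarrow{\sim} \Omega^{2n-1}_{\widetilde X}$ promotes this to $R^1\pi_*\Omega^{2n-1}_{\widetilde X} \ne 0$.

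The final step is to descend the non-vanishing to $\mathscr H^1 \underline \Omega_X^{2n-1}$. Since $\pi$ is a small birational morphism from a smooth $\widetilde X$ onto the normal variety $X$, Saito's decomposition theorem gives $\pi_*\mathbb Q_{\widetilde X}^H[2n] \cong \mathrm{IC}_X^H$, so the Hodge-graded pieces of the intersection Hodge module are computed by $\mathbf R\pi_* \Omega_{\widetilde X}^p$. Combined with the descent of $L_\sigma^{n-1}$ to $\mathrm{IC}_X^H$ (referenced in the abstract and developed alongside Theorem \ref{theorem main}), the nonzero class in $R^1\pi_*\Omega^{2n-1}_{\widetilde X}$ produces a nonzero class in $\mathscr H^1 \underline \Omega_X^{2n-1}$, contradicting the hypothesis.

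The main obstacle I anticipate is this last step: precisely relating $\underline \Omega_X^{2n-1}$ to the Hodge-graded pieces of $\mathrm{IC}_X^H$ for a rational (and in particular symplectic) variety, and verifying that the class constructed from the topology of the fibers is not killed under the comparison. This is where Saito's Hodge module machinery and the paper's earlier analysis of the symmetry on the intersection Hodge module would do the real work; the geometric input from symplectic resolutions (smallness, semi-smallness, Lagrangian fibers with nontrivial Hodge cohomology) is, by comparison, classical.
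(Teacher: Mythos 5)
Your geometric core --- a nontrivial proper symplectic resolution $\phi\colon Z\to X$ has positive-dimensional projective fibers, the first Chern class of a $\phi$-ample line bundle gives a nonzero germ of $R^1\phi_*\Omega_Z^1$, and wedging with $\widetilde\sigma^{\,n-1}$ transports this to $R^1\phi_*\Omega_Z^{2n-1}\ne 0$ --- is exactly the (implicit) input behind the paper's Corollary \ref{corollary vanishing symplectic resolution}. The gap is the step you yourself flag as the main obstacle: descending the class to $\mathscr H^1\underline\Omega_X^{2n-1}$ via $\phi_*\mathbb Q^H_Z[2n]\cong\mathrm{IC}_X$. That route does not work as stated, because the Du Bois complex computes $\mathrm{gr}^F\mathrm{DR}$ of $\mathbb Q_X^H$, not of $\mathrm{IC}_X$; the two differ by lower weight-graded pieces, and identifying $\underline\Omega_X^{2n-1}$ with a Hodge-graded piece of $\mathrm{DR}(\mathcal{IC}_X)$ is precisely the kind of comparison the paper has to work for separately in \S\ref{section intersection module} and never proves in the form your argument needs. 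As written, the contradiction is not reached.

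The fix is more elementary and is how the paper argues. Since $\phi$ is semismall, $\dim L\le 2n-2$ where $L=\phi^{-1}(\Sigma)$, and $\dim\Sigma\le 2n-2$ as well; hence $\underline\Omega_L^{2n-1}=0$ and $\underline\Omega_\Sigma^{2n-1}=0$ for dimension reasons, and the blow-up triangle of Theorem \ref{theorem du Bois properties}\ref{theorem right triangle resolution exceptional divisor} collapses to a quasi-isomorphism $\underline\Omega_X^{2n-1}\xrightarrow{\sim}\mathbf R\phi_*\Omega_Z^{2n-1}$. Grothendieck duality (\ref{equation duality isomorphisms}) then gives $\mathbb D_X(\underline\Omega_X^{2n-1})\cong\mathbf R\phi_*\Omega_Z^1$, so $\mathscr H^1\mathbb D_X(\underline\Omega_X^{2n-1})=R^1\phi_*\Omega_Z^1\ne 0$, which is the desired contradiction. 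Note that this bypasses Theorem \ref{theorem main} entirely (your first translation step is harmless but unnecessary), works directly with the dual complex rather than with $\underline\Omega_X^{2n-1}$, and uses only semismallness rather than smallness --- which is why the version of this statement in the body of the paper does not assume $X$ is terminal.
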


The converse is not true, as it may be the case that $X$ admits a \textit{local} symplectic resolution of singularities, or even a proper bimeromorphic model.

Coincidentally, the local vanishing (\ref{equation local vanishing degree 1 symplectic}) was also observed for toric varieties: in this case, the vanishing $R^1\pi_*\Omega_{\widetilde X}^1(\log E) = 0$ holds if and only if the toric variety is \textit{simplicial} \cite[Theorem 1.1]{shen2024local}.  It would be interesting to know necessary and sufficient conditions for (\ref{equation local vanishing degree 1 symplectic}) to hold for a rational singularity. 

\subsection{Applications to Hodge theory} The vanishing theorems and symplectic symmetry of the Du Bois complex indicate that symplectic singularities are very restrictive.  We consider some immediate applications regarding the Hodge theory of special symplectic varieties. First, we show that symplectic varieties with isolated singularities admitting a symplectic resolution are $(n-1)$-Du Bois, where $2n$ is the dimension.  We also show the cohomology groups of projective symplectic varieties with isolated singularities admitting a symplectic resolution all carry pure Hodge structures.  Second, we study the Hodge theory of projective $\mathbb Q$-factorial terminal primitive symplectic 4-folds (see \S\ref{subsection primitive symplectic varieties}) and show that the cohomology groups carry pure Hodge structures.  In this case, we show that $X$ is \textit{nearly} $1$-rational, and the cohomology groups agree with the intersection cohomology groups.  Using the decomposition theorem, we also describe the Hodge theory of the exceptional divisors in this case.

\subsection{On the intersection Hodge module} To conclude this paper, we discuss how Theorem \ref{theorem main} is connected to intersection cohomology.  In \cite[\S 3]{tighe2022llv}, the author constructs a symplectic symmetry \begin{equation} \label{equation intersection cohomology symmetry}
   IH^{n-p,q}(X) \xrightarrow{\sim} IH^{n+p,q}(X) 
\end{equation} for $0 \le p \le n$, where $IH^k(X, \mathbb C) = \bigoplus_{r + s = k} IH^{r,s}(X)$ is the Hodge decomposition of the intersection cohomology groups associated to a primitive symplectic variety with isolated singularities (\S \ref{subsection primitive symplectic varieties}).  The proof is global, as the symmetry is only constructed at the level of cohomology, although the main motivation was to use this to extend the LLV structure theorem to the singular setting.

A local version of (\ref{equation intersection cohomology symmetry}) can be obtained by consider the intersection cohomology Hodge module.  Briefly, a pure Hodge module generalizes the notion of a variation of pure Hodge structures in the presence of singularities.  The intersection Hodge modules $\mathrm{IC}_X$ comes with the data of a perverse sheaf $IC_X^\bullet$, which is just the intersection cohomology complex with rational coefficients; a $\mathscr D$-module structure $\mathcal{IC}_X$ (with respect to some embedding in a smooth manifold) coming with a ``good'' Hodge filtration $F$; and an identification $$IC_X^\bullet \otimes \mathbb C = \mathrm{DR}(\mathcal{IC}_X),$$ where $\mathrm{DR}$ is the de Rham functor.  By construction, $\mathrm{IC}_X$ is the unique pure Hodge module extending the variation of Hodge structure $\mathbb C_U[2n]$ on the regular locus.  Because of this, the symplectic form induces morphisms $$L_\sigma^p:\mathrm{gr}_{-(n-p)}^F\mathrm{DR}(\mathcal{IC}_X)[n+p] \to \mathrm{gr}_{-(n+p)}^F\mathrm{DR}(\mathcal{IC}_X)[n-p].$$ for each $p$.  A nearly identical proof to Theorem \ref{theorem main} gives the following:

\begin{theorem} \label{theorem intersection hodge module}
 Let $X$ be a symplectic variety of dimension $2n$. Then $$L_\sigma^{n-1}:\mathrm{gr}_{-1}^F\mathrm{DR}(\mathcal{IC}_X)[2n-1] \xrightarrow{\sim} \mathrm{gr}_{-(2n-1)}^F\mathrm{DR}(\mathcal{IC}_X)[1]$$ is a quasi-isomorphism.    
\end{theorem}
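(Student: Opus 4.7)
The plan is to reproduce the argument of Theorem~\ref{theorem main} almost verbatim, with the Du Bois complex $\underline \Omega_X^{\bullet}$ replaced throughout by the graded pieces of the de Rham complex of the intersection Hodge module. Write $\mathcal F^{p} := \mathrm{gr}_{-p}^F\mathrm{DR}(\mathcal{IC}_X)[2n-p]$, so that $\mathcal F^{p}|_U \cong \Omega_U^p$ on the regular locus $U$. Two structural facts from Saito's theory make the translation work. First, $\mathrm{IC}_X$ is a self-dual pure Hodge module of weight $2n$, and passing through $\mathrm{gr}_{-p}^F\mathrm{DR}$ this translates into a canonical Grothendieck duality identification $\mathbb D_X(\mathcal F^{n+p}) \cong \mathcal F^{n-p}$. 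Second, for any log resolution $\pi:\widetilde X \to X$, the decomposition theorem exhibits $\mathrm{IC}_X$ as a direct summand of $\pi_*^H \mathbb Q_{\widetilde X}^H[2n]$, so that $\mathcal F^{p}$ becomes a direct summand of $\mathbf R\pi_* \Omega_{\widetilde X}^{p}$ compatibly with the Hodge filtration.

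With these inputs in place, the argument of Theorem~\ref{theorem main} carries over. For $0 < p \le n$ one forms the commutative diagram
\[
\begin{tikzcd}
\mathbb D_X(\mathcal F^{n+p}) \arrow{r} \arrow{d}{L_\sigma^{p}} & \mathbf R\mathscr Hom_{\mathscr O_X}(\pi_*\Omega_{\widetilde X}^{n+p}, \mathscr O_X) \arrow{d}{(\sigma^p)^*} \\
\mathcal F^{n+p} \arrow{r} & \mathbf R\mathscr Hom_{\mathscr O_X}(\pi_*\Omega_{\widetilde X}^{n-p}, \mathscr O_X)
\end{tikzcd}
\]
in which the horizontal maps come from the summand inclusions $\mathcal F^{p} \hookrightarrow \mathbf R\pi_*\Omega_{\widetilde X}^{p}$ (dualized on top), and the right vertical is an isomorphism by holomorphic extension combined with the symplectic symmetry on $\widetilde X$. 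Specializing to $p = n-1$, the local and Steenbrink vanishing results of Theorem~\ref{theorem local and steenbrink vanishing intro}, applied to the direct summand $\mathcal F^{p} \subset \mathbf R\pi_*\Omega_{\widetilde X}^{p}$, force the horizontal maps to be injective on each cohomology sheaf in the relevant range. Together with the vertical isomorphism this implies that $L_\sigma^{n-1}$ is a quasi-isomorphism.

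The principal technical hurdle is to verify that the Hodge-theoretic operator $L_\sigma^{p}$ on $\mathcal F^{\bullet}$ coincides, under the summand inclusion into $\mathbf R\pi_*\Omega_{\widetilde X}^{\bullet}$, with the ordinary operator given by wedging with the pulled-back symplectic form $\widetilde \sigma$ on the resolution. This requires viewing $\widetilde \sigma$ as a morphism of pure Hodge modules of the appropriate Tate type and using that the direct summand produced by the decomposition theorem is canonical enough to commute with such morphisms. Once this compatibility is nailed down, the rest of the argument is literally that of Theorem~\ref{theorem main}, so the expected difficulty lies almost entirely in the bookkeeping needed to transport the duality and wedging compatibilities from the Du Bois framework to Saito's framework.
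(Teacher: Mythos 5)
Your overall strategy is the paper's: use the self-duality of $\mathrm{IC}_X$ to put $\mathrm{gr}_{-(n-p)}^F\mathrm{DR}(\mathcal{IC}_X)$ and $\mathrm{gr}_{-(n+p)}^F\mathrm{DR}(\mathcal{IC}_X)$ in Grothendieck duality, run the same commutative diagram and truncation/Ischebeck argument as in Theorem \ref{theorem main}, and conclude from the identification of the $\mathscr H^0$'s with $\pi_*\Omega_{\widetilde X}^{q}$. Your use of the decomposition theorem is a legitimate substitute for part of the paper's input: realizing $\mathcal F^{n+p}$ as a summand of $\mathbf R\pi_*\Omega_{\widetilde X}^{n+p}$ does give $\mathscr H^j\mathcal F^{n+p}=0$ for $j>n-p$ (via $R^j\pi_*\Omega_{\widetilde X}^{k}=0$ for $j+k>2n$), where the paper instead invokes semiperversity of the Hodge module.

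The gap is at the decisive final step. As in Step 2 of Theorem \ref{lemma local vanishing isolated singularities}, the diagram argument only shows that $L_\sigma^{n-1}$ is a quasi-isomorphism \emph{if and only if} the source $\mathcal F^{1}=\mathrm{gr}_{-1}^F\mathrm{DR}(\mathcal{IC}_X)[2n-1]$ has vanishing cohomology in top degree ($\mathscr H^j\mathcal F^1=0$ for $j\ge 2n-1$); without that extra vanishing you only get injectivity on cohomology sheaves, not surjectivity. You propose to import this from Theorem \ref{theorem local and steenbrink vanishing intro} through the summand inclusion $\mathcal F^{1}\subset\mathbf R\pi_*\Omega_{\widetilde X}^{1}$, but this fails on two counts: those vanishing theorems concern $\Omega_{\widetilde X}^{k}(\log E)$ and $\Omega_{\widetilde X}^{k}(\log E)(-E)$, not $\Omega_{\widetilde X}^{k}$, and nothing in the paper (nor in general) gives $R^{2n-1}\pi_*\Omega_{\widetilde X}^{1}=0$ --- the dual spectral sequence shows this sheaf receives a contribution from $R^{2n}\mathscr Hom_{\mathscr O_X}(R^{1}\pi_*\Omega_{\widetilde X}^{2n-1},\mathscr O_X)$, which is nonzero whenever $R^1\pi_*\Omega_{\widetilde X}^{2n-1}\ne 0$. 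The paper closes exactly this gap by citing \cite[Proposition 8.4]{kebekus2021extending}, which supplies the required top-degree vanishing for $\mathrm{gr}_{-1}^F\mathrm{DR}(\mathcal{IC}_X)$ intrinsically (it is the Hodge-module incarnation of the local vanishing $R^{2n-1}\pi_*\Omega_{\widetilde X}^1(\log E)=0$ for rational singularities), just as Step 3 of Theorem \ref{lemma local vanishing isolated singularities} cites (\ref{equation local vanishing rational singularities}). Two smaller points: Theorem \ref{theorem local and steenbrink vanishing intro} is only proved for isolated singularities, whereas the statement here is for arbitrary symplectic varieties, so you would also need the stratification/induction of \S\ref{subsection general case}; and the compatibility of $L_\sigma$ with the decomposition-theorem splitting, which you flag but do not prove, is sidestepped in the paper by defining $L_\sigma^p$ directly from the map $\mathbb C_U[2n]\to\mathbb C_U[2n+2]$ and the uniqueness of $\mathrm{IC}_X$ as an extension.
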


In particular, we get the symmetry \begin{equation}\label{equation intersection symmetry general intro}
IH^{1,1}(X) \xrightarrow{\sim} IH^{2n-1,1}(X)\end{equation} for any proper symplectic variety by taking hypercohomology.  We remark there is a connection between Theorem \ref{theorem main} and Theorem \ref{theorem intersection hodge module}, as the Du Bois complex underlies the holomorphic data of a complex of mixed Hodge module related to the cohomology of $X$.  Thus, the Hodge filtration associated to $\underline \Omega_X^\bullet$ induces the Hodge filtration on $\mathrm{IC}_X$; see \S \ref{section intersection module} for more details.  We also include a discussion on how the intersection cohomology has a deformation theoretic interpretation, using the symmetry (\ref{equation intersection symmetry general intro}). 

\subsection{Acknowledgements} I would like to thank Nicolas Addington, Mircea Musta\c t\u a, Mihnea Popa, Rosie Shen, Sridhar Venkatesh, Duc Vo for discussions and correspondences.  I was partially supported by NSF grant no. DMS-2039316.

\section{Preliminaries}

\subsection{Notation} If $X$ is a symplectic variety, we let $2n = \dim X$  If $\pi: \widetilde X \to X$ is a log-resolution, we set $$\Omega_{\widetilde X}^k(\log E)(-E) : = \Omega_{\widetilde X}^k(\log E) \otimes \mathscr I_E,$$ where $\mathscr I_E$ is the ideal sheaf of the exceptional divisor $E \subset \widetilde X$.

We write $H^k$ to be the cohomology of a complex, $\mathbb H^k$ to represent hypercohomology, and $\mathscr H^k$ for the cohomology sheaves.

Finally, we let $\omega_X^\bullet$ be the dualizing complex of $X$; if $X$ is Cohen-Macaulay, then $\omega_X^\bullet = \omega_X[\dim X]$.

\subsection{The Du Bois complex} \label{subsection du bois} Let $X$ be a complex algebraic variety.  The Du Bois complex $(\underline \Omega_X^\bullet, F)$ is an object in the derived category of filtered complexes of constructible sheaves, generalizing the holomorphic de Rham complex for algebraic varieties over $\mathbb C$.  We denote its graded pieces by $\underline \Omega_X^p : = \mathrm{gr}_F^p\underline \Omega_X^\bullet[p]$.  Studied by Du Bois \cite{du1981complexe} from Deligne's construction of the mixed Hodge structure \cite{deligne1974theorie}, the Du Bois complex is constructed by simplicial or cubical hyperresolutions of $X$.  We will not need this construction in this paper but will only use its formal consequences.  The interested reader can consult \cite{peters2008mixed} for a good treatment of this construction.

\begin{theorem} \label{theorem du Bois properties}
For $X$ a complex scheme of finite type and $\underline \Omega_X^{\bullet}$ its Du Bois complex, we have

\begin{enumerate}[label=\normalfont(\roman*)]
    \item \cite[4.5. Th\'eor\`eme]{du1981complexe} $\underline \Omega_X^{\bullet} \cong_{\mathrm{qis}} \mathbb C_X$.
    
    \item \cite[(3.2.1)]{du1981complexe}If $f:Y \to X$ is a proper morphism of finite type schemes, then there is a morphism $f^*: \underline \Omega_X^{\bullet} \to \mathbf Rf_*\underline \Omega_Y^{\bullet}$ in $D_{\mathrm{filt}}^b(X)$.
    
    \item\label{theorem du bois open restriction}\cite[3.10 Corollaire]{du1981complexe} If $U \subset X$ is an open subscheme then $\underline \Omega_X^{\bullet}|_U \cong_{\mathrm{qis}} \underline \Omega_U^{\bullet}$.
    
    \item \label{theorem du Bois properties natural map kahler} \cite[\S 3.2]{du1981complexe} There is a natural morphism $\Omega_X^{\bullet} \to \underline \Omega_X^{\bullet}$, where $\Omega_X^{\bullet}$ is the complex of K\"ahler differentials.  Moreover, this morphism is a quasi-isomorphism if $X$ is smooth.
    
    \item \label{theorem right triangle resolution exceptional divisor} \cite[4.11 Proposition]{du1981complexe} There is an exact triangle $$\underline \Omega_X^p \to \underline \Omega_{\Sigma}^p\oplus \mathbf R\pi_*\Omega_{\widetilde X}^p \to \mathbf R\pi_*\underline \Omega_E^p \xrightarrow{+1}$$ where $\pi:\widetilde X \to X$ is a morphism of complex schemes of finite type with singular locus $\Sigma \subset X$ and $E = \pi^{-1}(\Sigma)$. In particular, $\underline \Omega_X^n \cong_{\mathrm{qis}} \mathbf R\pi_*\omega_{\widetilde X} \cong_{\mathrm{qis}} \pi_*\omega_{\tilde X}$ if $\pi:\widetilde X \to X$ is a resolution of singularities. 
    
    \item \cite[4.5 Th\'eor\`eme]{du1981complexe} If $X$ is a proper variety, there is a spectral sequence $$E_1^{p,q} : = \mathbb H^q(X, \underline \Omega_X^p)\Rightarrow H^{p+q}(X, \mathbb C)$$ which degenerates at $E_1$ for every $p,q$.  Moreover, the filtration induced by this degeneration is equal to the Hodge filtration on the underlying mixed Hodge structure.

    \item\label{theorem right triangle log zero du Bois} \cite[\S 3.C]{kovacsdu2011} If $\pi:\tilde X \to X$ is a log-resolution of singularities with exceptional divisor $E$, and if $\Sigma$ is the singular locus, there is a right triangle $$\mathbf R\pi_*\Omega_{\tilde X}^p(\log E)(-E) \to \underline \Omega_X^p \to \underline \Omega_\Sigma^p \xrightarrow{+1}$$ which is independent of the choice of $\pi$.  

    \item \label{item vanishing for du bois complex} \cite[V.6.2]{guillen2006hyperresolutions} $\mathscr H^j\underline \Omega_X^k = 0$ for $j + k> \dim X$
\end{enumerate}
\end{theorem}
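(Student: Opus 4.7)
The plan is to take the construction of the Du Bois complex via cubical hyperresolutions as the starting point. Given a complex scheme $X$ of finite type, one builds an augmented cubical scheme $\epsilon_\bullet: X_\bullet \to X$ with each $X_i$ smooth and the augmentation of cohomological descent for $\mathbb{C}$-coefficients; one then defines $\underline \Omega_X^\bullet := \mathbf R\epsilon_{\bullet *}\Omega_{X_\bullet}^\bullet$, filtered by the stupid filtration $\sigma^{\ge p}$ on each $\Omega_{X_i}^\bullet$. The key foundational input, due to Guill\'en--Navarro Aznar, is the existence of such hyperresolutions together with the essential uniqueness (up to filtered quasi-isomorphism) of any two of them; granted this, $\underline \Omega_X^\bullet$ descends to a well-defined object of $D^b_{\mathrm{filt}}(X)$.

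With this setup, the eight properties follow in a fairly uniform manner. Property (i) is cohomological descent combined with the holomorphic Poincar\'e lemma on each smooth $X_i$. Property (ii) follows by lifting $f: Y \to X$ to a compatible morphism of hyperresolutions $Y_\bullet \to X_\bullet$ and applying $\mathbf R\epsilon_{\bullet *}$ to the resulting morphism of filtered de Rham complexes. Property (iii) is immediate from the fact that the restriction of a hyperresolution of $X$ to the preimage of $U$ is again a hyperresolution of $U$. For (iv), the constant simplicial object serves as a hyperresolution when $X$ is smooth, and in general the augmentation $\Omega_X^\bullet \to \Omega_{X_\bullet}^\bullet$ induces the desired natural morphism.

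Properties (v) and (vii) come from the Mayer--Vietoris hyperresolution associated to the blow-up square $(\widetilde X, \Sigma, E)$: in (v) one extracts the long exact sequence corresponding to this square, while in (vii) the log-resolution variant identifies $\mathbf R\pi_*\Omega_{\widetilde X}^p(\log E)(-E)$ as the cone shift of $\underline \Omega_X^p \to \underline \Omega_\Sigma^p$, via the residue sequence on $\widetilde X$. Property (vi) is Deligne's $E_1$-degeneration applied to $(\underline \Omega_X^\bullet, F)$, which can be proved by reducing to the smooth projective case via the hyperresolution and using the degeneration of the Hodge-to-de Rham spectral sequence fiberwise. Finally, (viii) is a dimension bound: each $X_i$ has $\dim X_i \le \dim X$, so $\Omega_{X_i}^k = 0$ whenever $k > \dim X$, and the Leray spectral sequence for $\epsilon_\bullet$ transfers this to the vanishing of $\mathscr H^j \underline \Omega_X^k$ for $j + k > \dim X$.

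The main obstacle is entirely in the foundational input: establishing the existence, functoriality, and essential uniqueness of cubical hyperresolutions with control over the induced filtration $F$ is the substantive technical content and is developed in detail in Guill\'en--Navarro Aznar. Once this machinery is granted, each of the eight properties becomes a comparatively short exercise in base change, spectral sequences, and cohomological descent, which is why the author is content to simply cite them here rather than reproduce the arguments.
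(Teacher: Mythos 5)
The paper gives no proof of this theorem at all---each item is a pointer to Du~Bois, Kov\'acs--Schwede, or Guill\'en--Navarro Aznar, and the surrounding text states that only the formal consequences of the hyperresolution construction will be used. Your sketch therefore does more than the paper does, and for items (i)--(vii) it is an essentially accurate outline of how the cited sources argue: descent plus the holomorphic Poincar\'e lemma for (i); functoriality and essential uniqueness of hyperresolutions for (ii)--(iv); the descent triangle of the square $(E\subset\widetilde X)\to(\Sigma\subset X)$ for (v); and, for (vii), the triangle $\Omega^p_{\widetilde X}(\log E)(-E)\to\Omega^p_{\widetilde X}\to\underline\Omega_E^p\xrightarrow{+1}$ for an snc divisor combined with (v) and the octahedral axiom. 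One caveat on (vi): $E_1$-degeneration is not obtained ``fiberwise'' from the smooth proper pieces---degeneration on each $X_\alpha$ does not pass to the total complex of the hyperresolution---but rather from Deligne's strictness argument for morphisms of (mixed) Hodge structures; as written that step is loose but repairable.

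The genuine gap is item (viii). The argument you give---$\dim X_\alpha\le\dim X$, hence $\Omega^k_{X_\alpha}=0$ for $k>\dim X$, transferred through the Leray spectral sequence---proves only that $\underline\Omega_X^k=0$ for $k>\dim X$; it says nothing about $\mathscr H^j\underline\Omega_X^k$ for $k\le\dim X$ and $j>\dim X-k$, which is the actual content of the statement (for instance the vanishing $\mathscr H^{\dim X-1}\underline\Omega_X^1=0$, exactly the range the paper later exploits). Even after inserting the sharper fact that a cubical hyperresolution can be chosen with $\dim X_\alpha\le\dim X-|\alpha|+1$, the naive count (namely $k\le\dim X_\alpha$ together with the cohomological-dimension bound on $R^\bullet\epsilon_{\alpha*}$) only yields $j+k\le 2\dim X_\alpha+|\alpha|-1$, which is roughly $2\dim X$ rather than $\dim X$. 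The bound in (viii) is genuinely the theorem of Guill\'en--Navarro Aznar cited in the statement and requires their inductive argument on the dimension of $X$; it cannot be read off from a dimension count on the pieces of a single hyperresolution. Since this vanishing is one of the two inputs to the Steenbrink vanishing statement derived immediately after the theorem and is used repeatedly in the rest of the paper, the citation should be retained rather than replaced by this step of the sketch.
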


As a corollary of Theorem \ref{theorem du Bois properties}\ref{theorem right triangle log zero du Bois},\ref{item vanishing for du bois complex}, we get the \textit{Steenbrink vanishing theorem} \begin{equation}\label{equation steenbrink vanishing general}
    R^j\pi_*\Omega_{\widetilde X}^k(\log E)(-E) = 0, \quad j+k > \dim X,
\end{equation} see \cite[Theorem 2]{steenbrink1985vanishing}.

For any complex $K \in D_{\mathrm{coh}}^b(X)$, we define the Grothendieck duality functor \begin{equation}
    \mathbb D_X(K) : = \mathbf R\mathscr Hom_{\mathscr O_X}(K, \omega_X^\bullet)[-\dim X]
\end{equation} Of particular interest are the isomorphisms \begin{equation} \label{equation duality isomorphisms}
    \mathbb D_X(\mathbf R\pi_*\Omega_{\widetilde X}^p(\log E)(-E)) \cong_{\mathrm{qis}} \mathbf R\pi_*\Omega_{\widetilde X}^{\dim X-p}(\log E), \quad \mathbb D_X(\mathbf R\pi_*\Omega_{\widetilde X}^p) \cong_{\mathrm{qis}} \mathbf R\pi_*\Omega_{\widetilde X}^{\dim X-p}.
\end{equation} In particular, $\mathbb D_X(\underline \Omega_X^p) \xrightarrow{\sim} \mathbf R\pi_*\Omega_{\widetilde X}^{\dim X-p}(\log E)$ for $p > \dim \Sigma$ by Theorem \ref{theorem du Bois properties}\ref{theorem right triangle log zero du Bois}.

\subsection{Symplectic varieties}

\begin{definition}
A normal complex analytic variety $X$ is called a \textit{symplectic variety} if the regular locus $U$ admits a holomorphic symplectic form $\sigma \in H^0(U, \Omega_U^2)$ which extends holomorphically across any resolution of singularities.
\end{definition}

By \cite[Proposition 1.3]{beauville2000symplectic} (or \S \ref{subsection derived symmetries}), symplectic singularities are rational and therefore Gorenstein as the symplectic form defines a section $\sigma^n \in H^0(X, \omega_X)$, where $2n = \dim X$.  In particular, the dualizing complex satisfies $\omega_X^\bullet \cong \mathscr O_X[2n]$.  We will use the following version of Ischebeck's lemma, see \cite[\href{https://stacks.math.columbia.edu/tag/0A7U}{Tag 087U}]{stacks-project}:

\begin{lemma} \label{lemma dim support ischebeck}
If $X$ is a symplectic variety of dimension $2n$ and $\mathscr F$ a coherent $\mathscr O_X$-module, then $$\mathscr H^j\mathbb D_X(\mathscr F) = R^j\mathscr Hom_{\mathscr O_X}(\mathscr F, \mathscr O_X) = 0$$ for $j < 2n-\dim \mathrm{Supp}~\mathscr F.$ 
\end{lemma}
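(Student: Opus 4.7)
The plan is to reduce the statement to Ischebeck's classical Ext-vanishing theorem over a Cohen--Macaulay local ring. The two expressions in the statement agree tautologically: since a symplectic variety is Gorenstein by \cite[Proposition 1.3]{beauville2000symplectic}, we have $\omega_X^\bullet \cong \mathscr O_X[2n]$, so
$$\mathbb D_X(\mathscr F) = \mathbf R\mathscr Hom_{\mathscr O_X}(\mathscr F, \omega_X^\bullet)[-2n] \cong \mathbf R\mathscr Hom_{\mathscr O_X}(\mathscr F, \mathscr O_X).$$
This reduces the problem to showing the vanishing of $R^j\mathscr Hom_{\mathscr O_X}(\mathscr F, \mathscr O_X)$ in the claimed range.

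Next I would check this stalk-wise. Fix $x \in \mathrm{Supp}~\mathscr F$ and set $R = \mathscr O_{X,x}$, $M = \mathscr F_x$. Since $X$ is normal with rational singularities, it is irreducible and Cohen--Macaulay, so $R$ is a local Cohen--Macaulay ring with $\mathrm{depth}(R) = \dim R$. Ischebeck's theorem then gives
$$\mathrm{Ext}_R^j(M, R) = 0, \qquad j < \dim R - \dim_R M.$$

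All that remains is the numerical inequality $\dim R - \dim_R M \geq 2n - \dim \mathrm{Supp}~\mathscr F$. This is standard dimension bookkeeping: since $X$ is equidimensional of dimension $2n$, one has $\dim R = 2n - \dim \overline{\{x\}}$, while $\dim_R M$ is the largest value of $\dim \overline{\{y\}} - \dim \overline{\{x\}}$ as $y$ ranges over the generic points of those components of $\mathrm{Supp}~\mathscr F$ containing $x$, so $\dim_R M \leq \dim \mathrm{Supp}~\mathscr F - \dim \overline{\{x\}}$; subtracting yields the bound. I don't anticipate any real obstacle here --- every ingredient is classical commutative algebra, and the lemma really amounts to repackaging Ischebeck after noting the Gorenstein identification above.
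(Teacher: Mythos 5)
Your proof is correct and is essentially the argument the paper leaves implicit: the paper offers no proof of this lemma, simply citing a dualizing-complex form of Ischebeck's theorem from the Stacks project, and your reduction via the Gorenstein identification $\omega_X^\bullet \cong \mathscr O_X[2n]$ followed by classical Ischebeck over the Cohen--Macaulay local rings $\mathscr O_{X,x}$ (with the standard catenary dimension count) is exactly the content behind that citation.
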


Since they are Gorenstein, symplectic varieties are canonical, and they are terminal if and only if $$\mathrm{codim}_X(\Sigma) \ge 4$$ \cite[Corollary 1]{namikawa2001note}. 
 Conversely, any complex variety with rational singularities which admits a symplectic form on its regular locus is necessarily a symplectic variety, as any differential form on $U$ extends across the singularities by \cite[Corollary 1.8]{kebekus2021extending} or \cite[Theorem 4]{namikawa2000extension}.  Therefore, if $\pi:\widetilde X \to X$ is a resolution of singularities, the inclusion \begin{equation} \label{equation holomorphic extension}
     \pi_*\Omega_{\widetilde X}^p \hookrightarrow \Omega_X^{[p]} : = j_*\Omega_U^p
 \end{equation} is an isomorphism for every $0 \le p \le 2n$.  Equivalently, $\pi_*\Omega_{\widetilde X}^p$ is reflexive for each $p$.  An immediate consequence is the symmetry \begin{equation} \label{equation symplectic symmetry GR differentials}
    \pi_*\Omega_{\widetilde X}^{n-p} \cong \pi_*\Omega_{\widetilde X}^{n+p}
\end{equation} induced by the isomorphism (\ref{equation symplectic symmetry regular locus}).

Next, we extend this symmetry to the sheaves $\mathscr H^0\underline \Omega_X^p$ and $\mathscr H^0\mathbb D_X(\underline \Omega_X^{2n-p})$.  By \cite{shen2023k}, there is an isomorphism $\mathscr H^0\mathbb D_X(\underline \Omega_X^{2n-p}) \xrightarrow{\sim}\pi_*\Omega_{\widetilde X}^p(\log E)$ for each $p$, which is reflexive by holomorphic extension.  Moreover, since $X$ has rational singularities, there is an isomorphism $\mathscr H^0\underline \Omega_X^p \xrightarrow{\sim} \Omega_X^{[p]}$, see \cite[p. 8]{tighe2023holomorphic}.  In particular:

\begin{proposition} \label{proposition reflexivity and symmetry of h^0's}
If $X$ is a symplectic variety of dimension $2n$, then there are isomorphisms $$\mathscr H^0\underline \Omega_X^{n-p} \cong \mathscr H^0\mathbb D_X(\underline \Omega_X^{n+p}) \xrightarrow{\sim} \mathscr H^0\underline \Omega_X^{n+p} \cong \mathscr H^0\mathbb D_X(\underline \Omega_X^{n-p})$$ induced by the symplectic form.
\end{proposition}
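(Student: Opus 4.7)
The plan is essentially a bookkeeping argument on reflexive differentials: I would identify each of the four sheaves in the statement with either $\Omega_X^{[n-p]}$ or $\Omega_X^{[n+p]}$, and then invoke the classical symmetry induced by the symplectic form on the regular locus.

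First, the preceding subsection supplies two key identifications. Since $X$ has rational singularities, $\mathscr H^0\underline \Omega_X^q \cong \Omega_X^{[q]}$ for every $q$. By \cite{shen2023k} there is a canonical isomorphism $\mathscr H^0\mathbb D_X(\underline \Omega_X^{2n-q}) \cong \pi_*\Omega_{\widetilde X}^q(\log E)$. The chain of inclusions
$$\pi_*\Omega_{\widetilde X}^q \subseteq \pi_*\Omega_{\widetilde X}^q(\log E) \subseteq j_*\Omega_U^q,$$
combined with the holomorphic extension property of symplectic singularities (\ref{equation holomorphic extension}), forces the outer two terms to agree, so all three coincide with $\Omega_X^{[q]}$. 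Setting $q = n-p$ and $q = n+p$ immediately produces the two outer isomorphisms of the proposition:
$$\mathscr H^0\underline \Omega_X^{n-p} \cong \mathscr H^0\mathbb D_X(\underline \Omega_X^{n+p}) \cong \Omega_X^{[n-p]}, \qquad \mathscr H^0\underline \Omega_X^{n+p} \cong \mathscr H^0\mathbb D_X(\underline \Omega_X^{n-p}) \cong \Omega_X^{[n+p]}.$$

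For the central isomorphism, I would construct the map directly on reflexive hulls. On the regular locus $U$, wedging with $\sigma^p$ is an isomorphism $\Omega_U^{n-p} \xrightarrow{\sim} \Omega_U^{n+p}$ of locally free sheaves of the same rank; applying $j_*$ and using the reflexivity of $\Omega_X^{[n\pm p]}$ yields the desired $\Omega_X^{[n-p]} \xrightarrow{\sim} \Omega_X^{[n+p]}$. It remains to observe that, under the identifications above, this reflexive extension coincides with the map $\mathscr H^0\mathbb D_X(\underline \Omega_X^{n+p}) \to \mathscr H^0\underline \Omega_X^{n+p}$ induced by $L_\sigma^p$. This is a pure naturality check: both maps restrict on $U$ to wedging with $\sigma^p$, and both source and target are reflexive, so the agreement on the smooth locus propagates to all of $X$.

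The only place where something beyond formalism enters is the collapsing of $\pi_*\Omega_{\widetilde X}^q(\log E)$ to the reflexive hull $\Omega_X^{[q]}$. Rationality alone is insufficient for this, and the argument genuinely uses the stronger extension property characterizing symplectic singularities (equivalently, \cite[Corollary 1.8]{kebekus2021extending}). All remaining steps are bookkeeping around Grothendieck duality and the naturality of $L_\sigma$ on $U$.
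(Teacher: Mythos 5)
Your proposal is correct and follows essentially the same route as the paper: identify $\mathscr H^0\underline\Omega_X^q$ with $\Omega_X^{[q]}$ via rationality, identify $\mathscr H^0\mathbb D_X(\underline\Omega_X^{2n-q})$ with $\pi_*\Omega_{\widetilde X}^q(\log E)$ via \cite{shen2023k}, collapse the latter to the reflexive hull using the holomorphic extension property, and transport the symplectic symmetry (\ref{equation symplectic symmetry regular locus}) from $U$ by reflexivity. Your explicit sandwich argument for the reflexivity of $\pi_*\Omega_{\widetilde X}^q(\log E)$ and the naturality check for the central map are just slightly more spelled-out versions of what the paper leaves implicit.
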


The reflexivity of the sheaves $\mathscr H^0\underline \Omega_X^k$ motivates the following definition:

\begin{definition} \label{definition higher du bois}
Let $X$ be a symplectic variety (or more generally a complex analytic variety with rational singularities).

\begin{enumerate}
    \item We say that $X$ is $k$-Du Bois if the natural morphism $$\Omega_X^{[p]} \to \underline \Omega_X^p$$ is a quasi-isomorphism for each $0 \le p \le k$.

    \item We say that $X$ is $k$-rational if the natural morphism $$\Omega_X^{[p]} \to \mathbb D_X(\underline \Omega_X^{2n-p})$$ is a quasi-isomorphism for each $0 \le p \le k$.
\end{enumerate}
\end{definition}

This is a slight weakening of the standard definitions introduced in \cite{jung2021higher} and \cite{friedmanlaza2024higher}, which requires the natural morphism of Theorem \ref{theorem du Bois properties}\ref{theorem du Bois properties natural map kahler} to be a quasi-isomorphism, and we emphasize that there is not a natural morphism $\Omega_X^{[p]} \to \underline \Omega_X^p$ for a general algebraic variety. It is worth noting that $\Omega_X^1$ fails to be reflexive, even for the simplest symplectic singularities (e.g., $(\mathbb C^{2n}/\langle \pm 1\rangle, 0)$).  It is unclear if $\Omega_X^1$ can ever be reflexive for symplectic varieties.  However, the following is evidence that K\"ahler forms are difficult to work with in this case:

\begin{proposition} \label{proposition reflexive kahler differentials}
Let $X$ be a symplectic variety of dimension $2n$.  Suppose the symplectic form $\sigma$ extends to a K\"ahler form (e.g., $\Omega_X^2$ is reflexive).

\begin{enumerate}
    \item If $\Omega_X^k$ is reflexive for $k < n$, then $\Omega_X^{2n-k}$ is cotorsion-free.

    \item If $\Omega_X^k$ and $\Omega_X^{2n-k}$ are reflexive, then $X$ is smooth.
\end{enumerate}
\end{proposition}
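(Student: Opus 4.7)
The key is the wedge map $\wedge \sigma^{n-k}: \Omega_X^k \to \Omega_X^{2n-k}$ on K\"ahler differentials, which is well-defined because the hypothesis that $\sigma$ extends to a K\"ahler form gives $\sigma \in H^0(X, \Omega_X^2)$, and hence $\sigma^{n-k} \in H^0(X, \Omega_X^{2n-2k})$. Combined with the natural maps to reflexive hulls and the symplectic symmetry $\Omega_X^{[k]} \xrightarrow{\sim} \Omega_X^{[2n-k]}$ of Proposition~\ref{proposition reflexivity and symmetry of h^0's}, this assembles into the commutative square
\[
\begin{tikzcd}
\Omega_X^k \arrow{r}{\wedge \sigma^{n-k}} \arrow{d} & \Omega_X^{2n-k} \arrow{d} \\
\Omega_X^{[k]} \arrow{r}{\sim} & \Omega_X^{[2n-k]}
\end{tikzcd}
\]
which drives both parts.

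For part (1), reflexivity of $\Omega_X^k$ makes the left vertical arrow an isomorphism, so the composition along the top and right is an isomorphism. This exhibits a section of the reflexivization map $\Omega_X^{2n-k} \to \Omega_X^{[2n-k]}$, yielding a decomposition $\Omega_X^{2n-k} = L \oplus T$ where $L = \mathrm{im}(\wedge \sigma^{n-k}) \cong \Omega_X^{[2n-k]}$ is reflexive and $T$ is the kernel of reflexivization. Any local section of $\Omega_X^{2n-k}$ lying in $T$ vanishes on the regular locus $U$ (whose complement has codimension $\geq 2$), so $T$ coincides with the torsion subsheaf of $\Omega_X^{2n-k}$. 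Hence $\Omega_X^{2n-k}/\mathrm{torsion} \cong L$ is reflexive, which is exactly the cotorsion-free property.

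For part (2), the additional reflexivity of $\Omega_X^{2n-k}$ forces $T = 0$ in the above splitting, so $\wedge \sigma^{n-k}: \Omega_X^k \xrightarrow{\sim} \Omega_X^{2n-k}$ is an isomorphism of sheaves. At any point $x \in X$ with embedding dimension $N = \dim_\mathbb{C} \mathfrak m_x/\mathfrak m_x^2$, the defining ideal of a minimal local embedding lies in $\mathfrak m_x^2$, which forces $\dim_\mathbb{C}(\Omega_{X,x}^p/\mathfrak m_x\Omega_{X,x}^p) = \binom{N}{p}$. The isomorphism therefore imposes $\binom{N}{k} = \binom{N}{2n-k}$ at every $x$, and since $k < n$ the standard identity $\binom{N}{a}=\binom{N}{b}$ iff $a=b$ or $a+b=N$ leaves only $N = 2n$ as a possibility. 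Thus $X$ has embedding dimension equal to its dimension everywhere and is smooth.

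The main conceptual hurdle is setting up the correct commutative diagram and identifying the kernel $T$ with the torsion subsheaf of $\Omega_X^{2n-k}$; once this is done, part (1) is immediate from the splitting, and part (2) reduces to the binomial identity $\binom{N}{k} = \binom{N}{2n-k}$ at each stalk, whose unique solution in the range $k < n$ is $N = 2n$.
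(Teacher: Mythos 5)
Your proof is correct and follows essentially the same route as the paper's: part (1) is the observation that the wedge map $\wedge\sigma^{n-k}$ factors the reflexivization of $\Omega_X^{2n-k}$ through an isomorphism (the paper states this "immediately implies (1)"; you have usefully spelled out the splitting), and part (2) is the same minimal-generator count $\binom{N}{k}=\binom{N}{2n-k}$ forcing the embedding dimension to equal $2n$, for which the paper cites Graf. No gaps.
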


\begin{proof}
We say that a coherent sheaf $\mathscr F$ is cotorsion-free if the natural map $\mathscr F \to \mathscr F^{**}$ is surjective.  The restrictions on $(X,\sigma)$ immediately imply (1).  For (2), the assumptions imply $\Omega_X^k$ and $\Omega_X^{2n-k}$ are isomorphic; this means at each point $x \in X$, minimal generating sets of $\Omega_{X,x}^k$ and $\Omega_{X,x}^{2n-k}$ are equal.  The rank of these sets are $\binom{e}{k}$ and $\binom{e}{2n-k}$, respectively, where $e$ is the embedding dimension of $(X,x)$, see \cite[p.14]{graf2015generalized}.  This implies the embedding dimension must be $2n$.  
\end{proof}

In what follows, we will need the following property:

\begin{proposition} \label{proposition symstrat}
 Let $X$ be a symplectic variety.  
 
 \begin{enumerate}
     \item There is a stratification $X = X_0 \supset X_1 \supset X_2\supset ...$ of $X$ given by the singular locus of $X$, so that each $X_i = (X_{i-1})_{\mathrm{sing}}$.  The normalization of each $X_{i}$ is a symplectic variety, and $X_i^\circ : = (X_i)_{\mathrm{reg}}$ admits a global holomorphic symplectic form.  In particular, the singular locus $\Sigma$ has even dimension.
     \item Suppose that $x \in X$ such that $x \in X_{i}^\circ$.  Let $\widehat{X}_x$ and $\widehat{X_{i}^\circ}_x$ be the completion of $X$ and $X_{i}^\circ$ at $x$, respectively.  Then there is a decomposition $$\widehat{X}_x \cong Y_x\times \widehat{X_{i}^\circ}_x,$$ where $Y_x$ is a symplectic variety defined in an analytic neighborhood of $x$.
 \end{enumerate}   
\end{proposition}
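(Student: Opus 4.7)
The plan is to reduce both parts to Kaledin's decomposition theorem for symplectic varieties (\cite{kaledin2006symplectic}), with the main input being the Poisson geometry induced by $\sigma$. First I would observe that because $\sigma$ is nondegenerate on $U$, the inverse bivector $\sigma^{-1} \in H^0(U, \wedge^2 T_U)$ is defined on the regular locus. Since $X$ has rational singularities and is Gorenstein, the reflexive extension property (\cite[Corollary 1.8]{kebekus2021extending}, as used earlier in \S\ref{subsection derived symmetries}) promotes $\sigma^{-1}$ to a global Poisson bivector on $X$, making $X$ a Poisson variety whose symplectic leaves on $U$ are just the connected components of $U$.

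Next I would invoke Kaledin's theorem, which states that a symplectic variety admits a canonical stratification into finitely many locally closed, smooth symplectic leaves of the ambient Poisson structure, and that the normalization of the closure of any leaf is itself a symplectic variety in a natural way. To match this leaf stratification with the singular-locus stratification $X_0 \supset X_1 \supset \cdots$, I would argue by induction: the open leaf is exactly $X^\circ = X \setminus \Sigma$, and because the Poisson structure restricts compatibly to the singular locus, the next stratum $X_1 = \Sigma$ is a union of leaves of strictly smaller dimension. Repeating on the normalization of each irreducible component of $X_1$ (which, by Kaledin's result, is itself a symplectic variety) gives the desired recursive description. Since each leaf is a smooth symplectic manifold, its dimension is even, hence $\dim \Sigma$ is even and each $X_i^\circ$ carries a canonical global symplectic form, namely the one dual to the restricted Poisson tensor.

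For part (2), this is precisely Kaledin's formal Darboux-type decomposition theorem (\cite[Theorem 2.3]{kaledin2006symplectic}): for any $x$ lying in a leaf $X_i^\circ$ of the Poisson stratification, the formal completion factors as $\widehat{X}_x \cong Y_x \times \widehat{X_i^\circ}_x$, where $Y_x$ is a formal (and in fact analytic, in a neighborhood of $x$) transverse slice that is itself a symplectic variety whose only zero-dimensional symplectic leaf is the point $x$. So I would simply cite this decomposition, noting that the analytic version in a neighborhood follows by Artin approximation applied to the formal product decomposition.

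The main obstacle, which I would not reprove, is Kaledin's decomposition theorem itself; the Poisson-geometric argument there is the nontrivial content, requiring uniqueness of formal symplectic leaves and compatibility with the induced Poisson structure on singular strata. Once that input is in hand, everything in the statement follows formally from the extension of $\sigma^{-1}$ across the singularities and an induction on dimension.
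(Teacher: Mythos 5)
Your overall route is the same as the paper's: both parts are quoted from Kaledin's theorem on the Poisson/singular-locus stratification and the formal product decomposition (\cite[Theorem 2.3]{kaledin2006symplectic}), and the preliminary observation that $\sigma^{-1}$ extends to a global Poisson bivector (which in fact only needs normality of $X$, since $\bigl(\extp^2 T_X\bigr)^{**} \cong j_*\extp^2 T_U$) is exactly Kaledin's setup. So for part (1) and the formal statement in part (2) there is nothing to object to.

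The gap is in your final step for part (2). Kaledin's theorem produces $Y_x$ only as a \emph{formal} scheme: $\widehat X_x \cong Y_x \times \widehat{X_i^\circ}_x$ with $Y_x$ a formal symplectic germ. The proposition as stated requires $Y_x$ to be an honest symplectic variety defined in an \emph{analytic} neighborhood of $x$, and you assert this "follows by Artin approximation applied to the formal product decomposition." This is precisely the step the paper flags as unavailable: Artin's algebraization results do not apply here, because $Y_x$ is a priori not the completion of any analytic or algebraic germ, and the transverse slice need not have an isolated singularity (the hypothesis under which Artin's effectivity/algebraization of formal structures is usually invoked). One cannot simply approximate a formal isomorphism when one side of it has no analytic model to begin with. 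The paper instead cites \cite[Appendix A, Proposition 2.3]{kaplan2023crepant}, where the existence of an analytic representative of $Y_x$ is actually established. Without that input (or an argument replacing it), your proof of part (2) only yields the formal decomposition, not the statement claimed.
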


\begin{proof}
This is \cite[Theorem 2.3]{kaledin2006symplectic}, except the symplectic variety $Y_x$ is a priori defined only on the formal completion (and so we cannot use \cite{artin1969algebraic}).  However, this is in fact the case and is discussed in \cite[Appendix A, Proposition 2.3]{kaplan2023crepant}.
\end{proof}

\begin{example} \label{example product decomposition}
If $X$ has rational singularities, $\pi_*\Omega_{\widetilde X}^k(\log E)(-E)$ is not reflexive except possibly for $k > \dim \Sigma$. If $X$ is a symplectic variety, we can say a bit more due to the local product decomposition.  Since the problem of reflexivity is local, we can shrink $X$ as necessary and assume $X \cong Y_x \times U_x$, where $U_x \cong \widehat{X_i^\circ}_x$.  Since $U_x$ is smooth, there is a decomposition $$\pi_*\Omega_{\widetilde X}^k(\log E)(-E) = \bigoplus_{r + s = k}(\pi_x)_*\Omega_{\widetilde{Y_x}}^r(\log E_x)(-E_x)\otimes \Omega_{U_x}^s,$$ where $\pi_x: \widetilde{Y_x} \to Y_x$ is a log-resolution of singularities with exceptional divisor $E_x$.  For example, $\pi_*\Omega_{\widetilde X}^k(\log E)(-E)$ is reflexive if $s > \dim (Y_x)_{\mathrm{sing}}$ for every $k \ge s \ge k-\dim U_x$.
\end{example}

\section{Local and Steenbrink Vanishing for Symplectic Varieties} \label{section vanishing} Let $X$ be a terminal symplectic variety with singular locus $\Sigma$. If $\mathrm{codim}_X(\Sigma) > 2n-2p$, then $\mathscr H^0\underline \Omega_X^{2p} \cong \pi_*\Omega_{\widetilde X}^{2p}(\log E)(-E)$ by Theorem \ref{theorem du Bois properties}\ref{theorem right triangle log zero du Bois}.  For this choice of $p$, there is a morphism \begin{equation}
    \Omega_{\widetilde X}^{n-p}(\log E) \to \Omega_{\widetilde X}^{n+p}(\log E)(-E)
\end{equation} obtained by wedging with the symplectic form.  We therefore have the following:

\begin{theorem}\label{theorem factorization wedge}
Let $X$ be a terminal symplectic variety of dimension $2n$ with holomorphic symplectic form $\sigma \in H^0(X, \Omega_X^{[2]})$ and singular locus $\Sigma$.  If $\mathrm{codim}_X(\Sigma) > 2n-2p$, there is a morphism $$L_\sigma^p: = \mathbf R\pi_*\widetilde \sigma^p: \mathbf R\pi_*\Omega_{\widetilde X}^{n-p}(\log E) \to \mathbf R\pi_*\Omega_{\widetilde X}^{n+p}(\log E)(-E)$$ for any log-resolution of singularities $\pi:\widetilde X \to X$, where $\widetilde \sigma$ is the unique extension of $\sigma$ to $H^0(\widetilde X, \Omega_{\widetilde X}^2)$.  Moreover, this map does not depend on the choice of $\pi$.
\end{theorem}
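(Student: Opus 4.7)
The plan is to construct $L_\sigma^p$ at the level of sheaves on $\widetilde X$ by wedging with the canonical extension of $\sigma$, apply $\mathbf R\pi_*$, and then deduce independence of the choice of log-resolution from the uniqueness of this extension.

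By the defining property of a symplectic variety, $\sigma$ extends uniquely to a holomorphic 2-form $\widetilde\sigma \in H^0(\widetilde X, \Omega_{\widetilde X}^2)$. Wedging gives an $\mathcal O_{\widetilde X}$-linear map
\[
\widetilde\sigma^p \wedge - \colon \Omega_{\widetilde X}^{n-p}(\log E) \longrightarrow \Omega_{\widetilde X}^{n+p}(\log E),
\]
and the main content of the theorem is that under the hypothesis $\mathrm{codim}_X(\Sigma) > 2n - 2p$ this map factors through the subsheaf $\Omega_{\widetilde X}^{n+p}(\log E)(-E) \hookrightarrow \Omega_{\widetilde X}^{n+p}(\log E)$.

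The factorization is a local question, which I analyze stratum by stratum on $E$. At $x \in E$ with $\pi(x) \in X_i^\circ$ of dimension $d_i$, the hypothesis forces $d_i \le \dim\Sigma < 2p$. By the product decomposition of Proposition \ref{proposition symstrat}, analytically near $x$ we have $\widetilde X \cong \widetilde Y \times V$ with $Y$ an isolated symplectic singularity and $V$ smooth of dimension $d_i$; correspondingly $\widetilde\sigma = \widetilde\sigma_Y + \sigma_V$. Since $\sigma_V^k = 0$ whenever $k > d_i/2$, the binomial expansion of $\widetilde\sigma^p$ consists only of terms carrying at least one positive power of $\widetilde\sigma_Y$. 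In the isolated case, $\widetilde\sigma_Y$ vanishes on the exceptional fiber $E_Y$: for tangent vectors $v, w$ to $E_Y$ we have $d\pi(v) = d\pi(w) = 0$ since $E_Y$ contracts to the singular point, and combined with the identity $\widetilde\sigma_Y = \pi^*\sigma_Y$ on the complement and the rationality of symplectic singularities (which forces $H^0(E_Y, \Omega^2_{E_Y}) = 0$ for the exceptional fiber of an isolated rational singularity), this yields $\widetilde\sigma_Y|_{E_Y} = 0$. In local snc coordinates this means $\widetilde\sigma_Y$ has the shape $dz \wedge \beta + z\gamma$ along each component of $E_Y$, and a direct check confirms that wedging any form of this shape into $\Omega_{\widetilde X}^{n-p}(\log E)$ lands in $\Omega_{\widetilde X}^{n+p}(\log E)(-E)$.

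Setting $L_\sigma^p := \mathbf R\pi_*(\widetilde\sigma^p \wedge -)$ produces the desired derived morphism. For independence of the log-resolution, given two log-resolutions $\pi_1, \pi_2 \colon \widetilde X_i \to X$, dominate both by a third $\pi_3 \colon \widetilde X_3 \to X$ with morphisms $f_i \colon \widetilde X_3 \to \widetilde X_i$; uniqueness of the holomorphic extension yields $f_i^* \widetilde\sigma_i = \widetilde\sigma_3$, so the sheaf-level wedge maps are compatible and the functoriality of $\mathbf R\pi_*$ via $\pi_3 = \pi_i \circ f_i$ identifies the resulting derived morphisms.

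The principal obstacle is rigorously establishing $\widetilde\sigma_Y|_{E_Y} = 0$ in the isolated case together with the accompanying snc coordinate verification; the codimension hypothesis is used precisely to eliminate the powers $\sigma_V^k$ arising from the smooth transverse factor, which are the only terms that could obstruct the factorization.
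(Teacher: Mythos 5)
Your overall strategy --- show that wedging with $\widetilde\sigma^p$ carries $\Omega^{n-p}_{\widetilde X}(\log E)$ into $\Omega^{n+p}_{\widetilde X}(\log E)(-E)$, then apply $\mathbf R\pi_*$ --- is the right one, but the way you establish the factorization has two genuine gaps. First, the reduction via Proposition \ref{proposition symstrat} does not produce the local model you use: if $x$ lies in a deeper stratum $X_i^\circ$ with $i \ge 2$, the transversal slice $Y_x$ has \emph{non-isolated} singularities (it meets the larger strata of $\Sigma$ in positive dimension), so ``isolated symplectic singularity times a smooth factor'' is not the general local picture; and even where the slice is isolated, the decomposition $\widehat X_x \cong Y_x \times \widehat{X_i^\circ}_x$ is a statement about $X$, not about the chosen log-resolution --- an arbitrary $\pi$ need not restrict to a product $\pi_x \times \mathrm{id}$ over such a neighborhood, so you cannot write $\widetilde X \cong \widetilde Y \times V$ with $E = E_Y \times V$ without further argument, whereas the theorem is asserted for every log-resolution. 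Second, the key vanishing $\widetilde\sigma_Y|_{E_Y} = 0$ is asserted rather than proved: the ``$d\pi(v) = 0$'' argument presupposes that $\sigma_Y$ is defined at the singular point (it is only defined on the regular locus, and does not extend continuously there), and the claim $H^0(E_Y, \Omega^2_{E_Y}) = 0$ for the exceptional divisor of an isolated rational singularity is a nontrivial Hodge-theoretic fact that needs a proof or citation. (Your snc computation that a form of shape $dz\wedge\beta + z\gamma$, i.e.\ a local section of $\Omega^2_{\widetilde X}(\log E)(-E)$, wedges log forms into $\Omega^{n+p}_{\widetilde X}(\log E)(-E)$ is fine.)

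The paper obtains the factorization by a short global argument that bypasses all of this, and which you should compare with your approach. The hypothesis $\mathrm{codim}_X(\Sigma) > 2n-2p$ means $\dim\Sigma < 2p$, so $\underline\Omega_\Sigma^{2p} = 0$ and the triangle of Theorem \ref{theorem du Bois properties}\ref{theorem right triangle log zero du Bois} gives $\mathscr H^0\underline\Omega_X^{2p} \cong \pi_*\Omega^{2p}_{\widetilde X}(\log E)(-E)$; since $X$ has rational singularities, $\mathscr H^0\underline\Omega_X^{2p} \cong \Omega_X^{[2p]}$, so the section $\sigma^p \in H^0(X, \Omega_X^{[2p]})$ corresponds to a global section of $\Omega^{2p}_{\widetilde X}(\log E)(-E)$, which must be $\widetilde\sigma^p$ because both restrict to $\sigma^p$ over $U$ and these sheaves are torsion-free. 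Wedging with a section of $\Omega^{2p}_{\widetilde X}(\log E)(-E)$ visibly maps $\Omega^{n-p}_{\widetilde X}(\log E)$ into $\Omega^{n+p}_{\widetilde X}(\log E)(-E)$, and independence of $\pi$ follows since the triangle in Theorem \ref{theorem du Bois properties}\ref{theorem right triangle log zero du Bois} is independent of the resolution. This is precisely the global statement ($\widetilde\sigma^p$ vanishes along $E$ in the logarithmic sense) that your local analysis is trying to reconstruct; to salvage the local route you would need an induction over the strata, a resolution adapted to the product structure, and a transfer argument to an arbitrary $\pi$.
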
 

More generally, the symplectic form induces wedging maps
\begin{equation}\label{equation wedging morphisms obvious}
\Omega_{\widetilde X}^p(\log E)(-E) \to \Omega_{\widetilde X}^{p+2}(\log E)(-E), \quad \Omega_{\widetilde X}^p(\log E) \to \Omega_{\widetilde X}^{p+2}(\log E).
\end{equation} for any $0 \le p \le n$. 

Now we construct the morphism on the level of the Du Bois complex.  Recall from Theorem \ref{theorem du Bois properties}\ref{theorem right triangle log zero du Bois} that there is a natural morphism $\mathbf R\pi_*\Omega_{\widetilde X}^p(\log E)(-E) \to \underline \Omega_X^p$ for each $p$.  Dualizing, this gives a morphism $\mathbb D_X(\underline \Omega_X^p) \to \mathbf R\pi_*\Omega_{\widetilde X}^{2n-p}(\log E)$.  The desired morphism is then obtained as \begin{equation} \label{equation natural symplectic symmetry morphism Du Bois complex}
    \mathbb D_X(\underline \Omega_X^{n+p}) \to \mathbf R\pi_*\Omega_{\widetilde X}^{n-p}(\log E) \xrightarrow{L_\sigma^p} \mathbf R\pi_*\Omega_{\widetilde X}^{n+p}(\log E)(-E) \to \underline \Omega_X^{n+p},
\end{equation} which we also call $L_\sigma^p$. We also have natural morphisms \begin{equation} \label{equation natural morphism from du bois to dual}
\underline \Omega_X^p \to \mathbb D_X(\underline \Omega_X^{2n-p})\end{equation} factoring through $\mathbf R\pi_*\Omega_{\widetilde X}^p$, and this morphism does not depend on the choice of resolution of singularities\footnote{To see this there is a map which only depends on $X$, one needs to pass through the intersection cohomology Hodge module; see \cite[\S 4.5]{saito1990mixed} or \S\ref{section intersection module}}.  We therefore obtain morphisms \begin{equation} \label{equation non isomorphic symplectic morphisms Du Bois}
 \underline \Omega_X^{n-p} \to \underline \Omega_X^{n+p}, \quad \mathbb D_X(\underline \Omega_X^{n+p}) \to \mathbb D_X(\underline \Omega_X^{n-p})   
\end{equation} factoring through (\ref{equation natural symplectic symmetry morphism Du Bois complex}).

\subsection{Proof of Main Theorem: isolated singularities} In this section, we prove Theorem \ref{theorem main} and Theorem \ref{theorem local and steenbrink vanishing intro} when $X$ has isolated singularities.  We first show the local vanishing theorem:

\begin{theorem} \label{lemma local vanishing isolated singularities}
Suppose $X$ is a singular symplectic variety of dimension $2n$ with isolated singularities. For every $0 < p \le n$, we have $$R^j\pi_*\Omega_{\widetilde X}^{n-p}(\log E) = 0, \quad n-p < j < n+p.$$  Moreover, $L_\sigma^{n-1}$ is a quasi-isomorphism.
\end{theorem}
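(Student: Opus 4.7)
The plan is to exploit the commutative diagram from Section~\ref{subsection derived symmetries}, in which the right vertical $(\sigma^p)^*$ is an isomorphism by the symplectic symmetry of reflexive differentials (Proposition~\ref{proposition reflexivity and symmetry of h^0's}). My first task is to analyze the top horizontal map $\alpha \colon \mathbb D_X(\underline \Omega_X^{n+p}) \to \mathbf R\mathscr Hom_{\mathscr O_X}(\pi_*\Omega_{\widetilde X}^{n+p}, \mathscr O_X)$, obtained by dualizing the inclusion $\pi_*\Omega_{\widetilde X}^{n+p} \cong \mathscr H^0\underline \Omega_X^{n+p} \to \underline \Omega_X^{n+p}$. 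Under the isolated-singularities hypothesis, Theorem~\ref{theorem du Bois properties}\ref{theorem right triangle log zero du Bois} gives $\underline \Omega_X^{n+p} \cong_{\mathrm{qis}} \mathbf R\pi_*\Omega_{\widetilde X}^{n+p}(\log E)(-E)$ for $p > 0$, and Steenbrink vanishing~(\ref{equation steenbrink vanishing general}) confines the cone $\tau^{\ge 1}\underline \Omega_X^{n+p}$ to cohomological degrees $[1, n-p]$ with cohomology supported on the finite singular set $\Sigma$. Ischebeck's Lemma~\ref{lemma dim support ischebeck} then places $\mathbb D_X(\tau^{\ge 1}\underline \Omega_X^{n+p})$ in cohomological degrees $[n+p, 2n-1]$, so the Grothendieck-dualized truncation triangle makes $\mathscr H^j\alpha$ an isomorphism for $j < n+p-1$ and an injection at $j = n+p-1$.

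Chasing the diagram, commutativity with the isomorphism $(\sigma^p)^*$ upgrades this to injectivity of $\mathscr H^j L_\sigma^p$ for all $j < n+p$. Steenbrink gives $\mathscr H^j\underline \Omega_X^{n+p} = R^j\pi_*\Omega_{\widetilde X}^{n+p}(\log E)(-E) = 0$ for $j > n-p$, and this injectivity forces $R^j\pi_*\Omega_{\widetilde X}^{n-p}(\log E) = \mathscr H^j\mathbb D_X(\underline \Omega_X^{n+p}) = 0$ in the range $n-p < j < n+p$, establishing the local vanishing.

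For the moreover, I specialize to $p = n-1$: local vanishing yields $\mathscr H^j\mathbb D_X(\underline \Omega_X^{2n-1}) = 0$ for $1 < j < 2n-1$, Steenbrink gives $\mathscr H^j\underline \Omega_X^{2n-1} = 0$ for $j \ge 2$, and $\mathscr H^0 L_\sigma^{n-1}$ is an isomorphism by Proposition~\ref{proposition reflexivity and symmetry of h^0's}. Consequently the cone $C = \mathrm{cone}(L_\sigma^{n-1})$ is supported on $\Sigma$ and has potentially nonzero cohomology only in degrees $1$ and $2n-2$, with $\mathscr H^1 C \cong \mathrm{coker}(\mathscr H^1 L_\sigma^{n-1})$ and $\mathscr H^{2n-2} C \cong \mathscr H^{2n-1}\mathbb D_X(\underline \Omega_X^{2n-1})$. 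The main obstacle is ruling out these two edge-case sheaves. The plan is to run the hyperext spectral sequence $E_2^{p,q} = \mathscr Ext^p(\mathscr H^{-q}\mathbf R\pi_*\Omega_{\widetilde X}^1(\log E), \mathscr O_X) \Rightarrow \mathscr H^{p+q}\underline \Omega_X^{2n-1}$ (using the identification $\mathbb D_X(\mathbf R\pi_*\Omega_{\widetilde X}^1(\log E)) \cong \underline \Omega_X^{2n-1}$): Steenbrink in degrees $2n-2$ and $2n-1$ together with $\mathscr Ext^j(\pi_*\Omega_{\widetilde X}^1, \mathscr O_X) = 0$ for $j > 2n-2$ (the $S_2$-vanishing from reflexivity) forces the single nontrivial differential $d_2 \colon \mathscr Ext^{2n-2}(\pi_*\Omega_{\widetilde X}^1, \mathscr O_X) \to (R^1\pi_*\Omega_{\widetilde X}^1(\log E))^\vee$ to be an isomorphism. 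The symplectic identification $\pi_*\Omega^1 \cong \pi_*\Omega^{2n-1}$ and the naturality of $d_2$ under the morphism induced by $L_\sigma^{n-1}$ then transport this to the dual spectral sequence, simultaneously yielding $\mathscr H^{2n-1}\mathbb D_X(\underline \Omega_X^{2n-1}) = 0$ and $\mathscr H^1 L_\sigma^{n-1}$ surjective, which completes the proof.
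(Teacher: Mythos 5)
Your proof of the local vanishing $R^j\pi_*\Omega_{\widetilde X}^{n-p}(\log E) = 0$ for $n-p < j < n+p$ is correct and is essentially Step 1 of the paper's argument: dualize the truncation triangle for $\underline \Omega_X^{n+p} \cong \mathbf R\pi_*\Omega_{\widetilde X}^{n+p}(\log E)(-E)$, use Lemma \ref{lemma dim support ischebeck} together with the punctual support of the higher cohomology to place $\mathbb D_X(\tau_{\ge 1}\underline \Omega_X^{n+p})$ in degrees $\ge n+p$, and chase the square against the isomorphism $(\sigma^p)^*$ to get injectivity of $\mathscr H^jL_\sigma^p$ for $j < n+p$. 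Your reduction of the ``moreover'' to the two edge sheaves $\mathrm{coker}(\mathscr H^1L_\sigma^{n-1})$ and $\mathscr H^{2n-1}\mathbb D_X(\underline \Omega_X^{2n-1}) = R^{2n-1}\pi_*\Omega_{\widetilde X}^1(\log E)$ likewise matches the paper's Step 2, and your computation that $d_2\colon R^{2n-2}\mathscr Hom_{\mathscr O_X}(\pi_*\Omega_{\widetilde X}^1, \mathscr O_X) \to R^{2n}\mathscr Hom_{\mathscr O_X}(R^1\pi_*\Omega_{\widetilde X}^1(\log E), \mathscr O_X)$ is an isomorphism is fine.

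The last step, however, has a genuine gap. The naturality square induced by $L_\sigma^{n-1}$ reads $\lambda_1\circ d_2' = d_2\circ \lambda_0$, where $\lambda_0$ is the isomorphism induced by $\sigma^{n-1}\colon \pi_*\Omega_{\widetilde X}^1 \xrightarrow{\sim} \pi_*\Omega_{\widetilde X}^{2n-1}$ and $\lambda_1 = R^{2n}\mathscr Hom_{\mathscr O_X}(\mathscr H^1L_\sigma^{n-1}, \mathscr O_X)$ is surjective, being the (exact, contravariant) dual of the injection $\mathscr H^1L_\sigma^{n-1}$ on finite-length modules. From $\lambda_1\circ d_2'$ being an isomorphism you may conclude only that $d_2'$ is split injective --- which you already knew from the vanishing of the abutment $R^{2n-2}\pi_*\Omega_{\widetilde X}^1(\log E)$ --- and not that $d_2'$ is surjective: the image of $d_2'$ can be a proper submodule of the target on which $\lambda_1$ restricts to an isomorphism, with $\ker\lambda_1$ absorbing the excess. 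Indeed $\mathrm{coker}(d_2') \cong R^{2n-1}\pi_*\Omega_{\widetilde X}^1(\log E)$ while $\ker\lambda_1$ is the dual of $\mathrm{coker}(\mathscr H^1L_\sigma^{n-1})$, and these two modules are dual to one another because the cone of $L_\sigma^{n-1}$ is (anti-)self-dual under $\mathbb D_X$. So your comparison shows that the two remaining obstructions are equivalent to each other; it does not show that either vanishes, and the argument is circular at exactly the point where it needs to close.

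What is missing is an independent input, and this is precisely what the paper supplies in its Step 3: the local vanishing theorem $R^{\dim X - 1}\pi_*\Omega_{\widetilde X}^1(\log E) = 0$, valid for any variety with rational singularities, which is (\ref{equation local vanishing rational singularities}), i.e.\ \cite[Theorem 1.10]{kebekus2021extending}. That theorem kills $\mathscr H^{2n-1}\mathbb D_X(\underline \Omega_X^{2n-1})$ outright, and then your duality square (equivalently, the paper's Step 2) forces $\mathscr H^1L_\sigma^{n-1}$ to be surjective as well. This is a nontrivial external theorem, not something recoverable from Steenbrink vanishing and reflexivity alone, so you should cite it rather than attempt to reprove it via the spectral sequence bootstrap.
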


\begin{proof}
We proceed in several steps: \newline 

\noindent \textbf{Step 1}. For any complex $K$ concentrated in non-negative degree, there is an exact triangle $$\mathscr H^0K \to K \to \tau_{\ge 1}K \xrightarrow{+1},$$ where $\tau_{\ge p}$ is the truncation of $K$ in cohomological degree $\ge p$.  By (\ref{equation duality isomorphisms}), there is a morphism of triangles \begin{equation} \label{equation commutative diagram} 
\begin{tikzcd}
    \mathbb D_X(\tau_{\ge 1}\mathbf R\pi_*\Omega_{\widetilde X}^{n+p}(\log E)(-E)) \arrow{d} \arrow{r}{(\tau_{\ge 1}L_\sigma^p)^*}~~~~~ & ~~~~~ \mathbb D_X(\tau_{\ge 1} \mathbf R\pi_*\Omega_{\widetilde X}^{n-p}(\log E)) \arrow{d} \\
    \mathbf R\pi_* \Omega_{\widetilde X}^{n-p}(\log E) \arrow{r}{(L_\sigma^p)^*} \arrow{d} & \mathbf R\pi_*\Omega_{\widetilde X}^{n+p}(\log E)(-E) \arrow{d}\\
    \mathbb D_X(\pi_*\Omega_{\widetilde X}^{n+p}) \arrow{r}{(\sigma^p)^*} & \mathbb D_X(\pi_*\Omega_{\widetilde X}^{n-p}),
    \end{tikzcd}
    \end{equation}obtained by dualizing this triangle and applying the morphism of Theorem \ref{theorem factorization wedge}, where $$\sigma^p: \pi_*\Omega_{\widetilde X}^{n-p} \xrightarrow{\sim} \pi_*\Omega_{\widetilde X}^{n+p}$$ is the isomorphism (\ref{equation symplectic symmetry GR differentials}).  

    Consider the sheaves $R^j\mathscr Hom_{\mathscr O_X}(\tau_{\ge 1} \mathbf R\pi_*\Omega_{\widetilde X}^{n+p}(\log E)(-E), \mathscr O_X)$.  There is a spectral sequence \begin{equation} \label{equation important spectral sequence}
    \begin{split}
       E_2^{k,l} &= R^k\mathscr Hom_{\mathscr O_X}(\mathscr H^{-l}\tau_{\ge 1}\mathbf R\pi_*\Omega_{\widetilde X}^{n+p}(\log E)(-E), \mathscr O_X) \\ & \Rightarrow R^{k +l}\mathscr Hom_{\mathscr O_X}(\tau_{\ge 1}\mathbf R\pi_*\Omega_{\widetilde X}^{n+p}(\log E)(-E), \mathscr O_X). 
    \end{split}
    \end{equation} Of course, the cohomology sheaves $\mathscr H^{-l}\tau_{\ge 1}\mathbf R\pi_*\Omega_{\widetilde X}^{n+p}(\log E)(-E)$ are zero for $l \ge 0$ and have zero-dimensional support for $l < 0$.  Therefore, $E_2^{k,l} = 0$ for any $k \ne 2n$ by Lemma \ref{lemma dim support ischebeck}, and $E_2^{k,l}$ degenerates on the second page.  It follows that \begin{equation}\label{equation important identity}
    \begin{split}
    R^j\mathscr Hom_{\mathscr O_X}(\tau_{\ge 1}\mathbf R\pi_*\Omega_{\widetilde X}^{n+p}&(\log E)(-E), \mathscr O_X)\\ & = R^{2n}\mathscr Hom_{\mathscr O_X}(R^{2n-j}\pi_*\Omega_{\widetilde X}^{n+p}(\log E)(-E), \mathscr O_X).
    \end{split}
    \end{equation} Returning to (\ref{equation commutative diagram}), we can see $R^j\pi_*\Omega_{\widetilde X}^{n-p}(\log E) \to R^j\pi_*\Omega_{\widetilde X}^{n+p}(\log E)(-E)$ is at least injective for $j < n+p$, using the fact that (\ref{equation important identity}) vanishes in this range by Steenbrink vanishing (\ref{equation steenbrink vanishing general}).  In particular, $R^j\pi_*\Omega_{\widetilde X}^{n-p}(\log E) = 0$ for $n-p < j < n+p$.   \newline
    
  \noindent \textbf{Step 2}. Let $\mathscr C_p$ be the mapping cone of $L_\sigma^p$.  For $j < n+p$, we get an exact sequence $$0 \to R^j\pi_*\Omega_{\widetilde X}^{n-p}(\log E) \to R^j\pi_*\Omega_{\widetilde X}^{n+p}(\log E)(-E) \to R^{j+1}\mathscr Hom(\mathscr C_p, \mathscr O_X) \to 0.$$ By Proposition \ref{proposition reflexivity and symmetry of h^0's} and Theorem \ref{theorem du Bois properties}\ref{theorem right triangle log zero du Bois}, $\mathscr H^0\mathscr C_p = 0$.  Moreover, the cohomology sheaves $\mathscr H^j\mathscr C_p$ have zero-dimensional support, so a similar application of the spectral sequence (\ref{equation important spectral sequence}) shows that $$R^{j + 1}\mathscr Hom_{\mathscr O_X}(\mathscr C_p, \mathscr O_X) = R^{2n}\mathscr Hom(\mathscr H^{2n-j-1}\mathscr C_p, \mathscr O_X).$$ But for $j < n+p$, $\mathscr H^{2n-j-1}\mathscr C_p = R^{2n-j}\pi_*\Omega_{\widetilde X}^{n-p}(\log E)$ by another application of Steenbrink vanishing. It follows that $L_\sigma^p$ is a quasi-isomorphism if and only if $$R^j\pi_*\Omega_{\widetilde X}^{n-p}(\log E) = 0$$ for $j \ge n+p$. \newline  

\noindent \textbf{Step 3}.  To conclude, we recall $R^{2n-1}\pi_*\Omega_{\widetilde X}^1(\log E) = 0$ by (\ref{equation local vanishing rational singularities}).  By Step 2, this implies $L_\sigma^{n-1}$ is a quasi-isomorphism.  
\end{proof}

\subsection{Steenbrink vanishing for symplectic varieties}

Next, we consider how Steenbrink vanishing extends for symplectic varieties.

\begin{theorem}\label{lemma steenbrink vanishing isolated singularities} Let $X$ be a  symplectic variety of dimension $2n$ with isolated singularities.  Then $$R^j\pi_*\Omega_{\widetilde X}^{n-p}(\log E)(-E) = 0$$ for $j \ge n-p$.  In particular, $X$ is $1$-Du Bois.
\end{theorem}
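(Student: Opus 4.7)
The plan is to adapt the dualization and spectral sequence argument from Theorem~\ref{lemma local vanishing isolated singularities} to the twisted forms. By Grothendieck duality~(\ref{equation duality isomorphisms}), I identify $\mathbf R\pi_*\Omega_{\widetilde X}^{n-p}(\log E)(-E) \cong \mathbb D_X(\mathbf R\pi_*\Omega_{\widetilde X}^{n+p}(\log E))$, so the claimed vanishing becomes $\mathscr H^j\mathbb D_X(\mathbf R\pi_*\Omega_{\widetilde X}^{n+p}(\log E)) = 0$ for $j \ge n-p$. Steenbrink vanishing~(\ref{equation steenbrink vanishing general}) disposes of $j > n+p$, so the work is concentrated in the range $n-p \le j \le n+p$ under the standing assumption $n-p \ge 1$ (so that $\underline \Omega_X^{n-p} \cong \mathbf R\pi_*\Omega_{\widetilde X}^{n-p}(\log E)(-E)$).

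Next I would run the $\mathscr Hom$-spectral sequence $E_2^{k,l} = R^k\mathscr Hom(R^{-l}\pi_*\Omega_{\widetilde X}^{n+p}(\log E),\mathscr O_X) \Rightarrow \mathscr H^{k+l}\mathbb D_X(\mathbf R\pi_*\Omega_{\widetilde X}^{n+p}(\log E))$. Since isolated singularities force $R^{-l}\pi_*\Omega_{\widetilde X}^{n+p}(\log E)$ for $-l \ge 1$ to have zero-dimensional support, Lemma~\ref{lemma dim support ischebeck} collapses the nonzero entries to the column $E_2^{k,0} = R^k\mathscr Hom(\Omega_X^{[n+p]},\mathscr O_X)$, using the identification $\pi_*\Omega_{\widetilde X}^{n+p}(\log E) \cong \Omega_X^{[n+p]}$ from Proposition~\ref{proposition reflexivity and symmetry of h^0's}, and the row $E_2^{2n,l}$ with $l < 0$. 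Killing the row in the range of interest amounts to establishing $R^i\pi_*\Omega_{\widetilde X}^{n+p}(\log E) = 0$ for the relevant $i$---essentially Theorem~\ref{theorem local and steenbrink vanishing intro}(2), which I would prove as an intermediate step via an analogous truncation-and-duality argument applied to the wedging map $\mathbf R\pi_*\Omega_{\widetilde X}^{n-p}(\log E)(-E) \to \mathbf R\pi_*\Omega_{\widetilde X}^{n+p}(\log E)$.

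For the column, I would use the symplectic perfect pairing $\Omega_X^{[n+p]} \otimes \Omega_X^{[n-p]} \to \omega_X \cong \mathscr O_X$ together with the symmetry of Proposition~\ref{proposition reflexivity and symmetry of h^0's} to identify $\mathscr Hom(\Omega_X^{[n+p]},\mathscr O_X) \cong \Omega_X^{[n-p]}$. The higher sheaves $R^k\mathscr Hom(\Omega_X^{[n+p]},\mathscr O_X)$ for $k \ge 1$ are supported on $\Sigma$, and the remaining task is to show they vanish for $n-p \le k \le 2n-1$.

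The main obstacle is this final column vanishing. In the crucial case $p = n-1$, which delivers the $1$-Du Bois conclusion, it reduces to showing $R^k\mathscr Hom(\Omega_X^{[1]},\mathscr O_X) = 0$ for $1 \le k \le 2n-2$. I expect to get this by dualizing the main theorem's quasi-isomorphism $\mathbf R\pi_*\Omega_{\widetilde X}^1(\log E) \cong \underline \Omega_X^{2n-1}$, which by Steenbrink on the right-hand side places $\mathbf R\pi_*\Omega_{\widetilde X}^1(\log E)$ in degrees $[0,1]$, and then comparing the resulting two-term truncation triangle (after applying $\mathbb D_X$) against $\underline \Omega_X^{2n-1}$ itself through the long exact sequence of cohomology sheaves. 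The general $p$ case then follows by bootstrapping from $p = n-1$ using the contraction symmetry between $\underline \Omega_X^{n-p}$ and $\underline \Omega_X^{n+p}$ of~(\ref{equation non isomorphic symplectic morphisms Du Bois}).
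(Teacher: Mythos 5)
There is a genuine gap here, and it is structural rather than a matter of missing details. Your plan is to compute $\mathscr H^j\mathbb D_X(\mathbf R\pi_*\Omega_{\widetilde X}^{n+p}(\log E))$ by showing that the row $E_2^{2n,l}$ and the column $E_2^{k,0}$ of the hyperext spectral sequence each vanish in the relevant range; neither does. For the row: the entry contributing to total degree $j$ is $R^{2n}\mathscr Hom(R^{2n-j}\pi_*\Omega_{\widetilde X}^{n+p}(\log E),\mathscr O_X)$, and the local vanishing you invoke (part (2) of Theorem \ref{theorem local and steenbrink vanishing intro}, i.e.\ Corollary \ref{corollary vanishing higher log E}) only covers $n-p < 2n-j < n+p-1$, leaving the edge positions $j=n-p$, $n-p+1$, $n+p$ untouched. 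Moreover, the paper deduces that corollary \emph{from} Theorem \ref{lemma steenbrink vanishing isolated singularities}, so invoking it is circular, and \S\ref{subsection general case} explains why the stronger vanishing $R^i\pi_*\Omega_{\widetilde X}^{n+p}(\log E)=0$ for all $i>n-p$ should genuinely fail (it would force $\mathbf R\pi_*\Omega_{\widetilde X}^{n+p}(\log E)(-E)$ to be a sheaf). For the column: in your crucial case $p=n-1$, dualizing the truncation triangle for $\mathbf R\pi_*\Omega_{\widetilde X}^1(\log E)$ and using (\ref{equation duality isomorphisms}) together with Theorem \ref{theorem main} gives $R^1\mathscr Hom(\Omega_X^{[1]},\mathscr O_X)\cong R^1\pi_*\Omega_{\widetilde X}^{2n-1}(\log E)(-E)\cong R^1\pi_*\Omega_{\widetilde X}^1(\log E)$, which is nonzero exactly for the germs of most interest (e.g.\ isolated symplectic singularities admitting a symplectic resolution, by Corollary \ref{corollary vanishing symplectic resolution}). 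So the ``remaining task'' you set yourself --- $R^k\mathscr Hom(\Omega_X^{[n+p]},\mathscr O_X)=0$ for $n-p\le k\le 2n-1$ --- is false already at $k=n-p$. The theorem's vanishing comes from a cancellation between the row and the column (a higher differential from the column into the row must be injective at the edge degrees), and nothing in your outline produces that; the deferred step you describe with ``I expect to get this by\dots'' is precisely where the argument cannot close.

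The paper's proof avoids the dual spectral sequence entirely at these edge degrees. It works with the log de Rham complex $\Omega_{\widetilde X}^\bullet(\log E)(-E)$: closedness of $\widetilde\sigma$ makes wedging descend to the image sheaves $d\Omega_{\widetilde X}^k(\log E)(-E)$; Friedman's comparison of the two truncated de Rham spectral sequences gives the injectivity $H^j(\widetilde X,\Omega_{\widetilde X}^{n-p}(\log E)(-E))\hookrightarrow H^j(\widetilde X,\Omega_{\widetilde X}^{n-p}(\log E))$; and iterated Steenbrink vanishing for $R^j\pi_*(d\Omega_{\widetilde X}^k(\log E)(-E))$, combined with the factorization through the quasi-isomorphism $L_\sigma^{n-1}$, forces the relevant classes to be zero. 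If you want to rescue a purely duality-theoretic argument, you would have to prove the injectivity of those spectral sequence differentials directly, which amounts to re-importing the de Rham input the paper uses.
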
     

\begin{proof}
By Steenbrink vanishing, we only need to check the vanishing for $n-p \le j \le n+p$.  We reduce to the edge cases $j = n-p$ or $n+p$ by studying the morphisms $R^j\pi_*\Omega_{\widetilde X}^{n-p}(\log E) \to R^j\pi_*\Omega_{\widetilde X}^{n+p}(\log E)$.  We do this by following an argument in \cite{friedman2022higher}. \newline 

\noindent \textbf{Step 1}.  Since the problem is local, we can shrink $X$ as necessary and prove the vanishing holds for cohomology groups. For each $k$, there is a natural morphism $$\mathbb H^j(\widetilde X, \Omega_{\widetilde X}^\bullet(\log E)(-E)/\mu^{\ge k+1}) \to \mathbb H^j(\widetilde X, \Omega_{\widetilde X}^\bullet(\log E)/\mu^{\ge k+1}),$$ where $\mu^\bullet$ is the stupid filtration, and this map is injective for each $j$ \cite[Lemma 3.1]{friedman2022higher}.  We consider two spectral sequences associated to these hypercohomology groups: \begin{equation} \label{equation spectral sequence truncated log -E}
\begin{split}
   (E_1^k)^{p,q} = H^q(\widetilde X, \mathscr H^p \mu_{\le k}\Omega_{\widetilde X}^\bullet(\log E)(-E))\Rightarrow \mathbb H^{p+q}(\widetilde X, \Omega_{\widetilde X}^\bullet(\log E)(-E)/\mu^{\ge k+1}) 
\end{split}
\end{equation} and \begin{equation} \label{equation spectral sequence log E}
    ('E_1^k)^{p,q} = H^q(\widetilde X, \mathscr H^p \mu_{\le k}\Omega_{\widetilde X}^\bullet(\log E)) \Rightarrow \mathbb H^{p+q}(\widetilde X, \Omega_{\widetilde X}^\bullet(\log E)/\mu^{\ge k+1}).
\end{equation}  If $k = 1$, note that we have shown that $('E_1^1)^{p,q} = 0$ for any $p,q > 0$.  Note also $H^q(\widetilde X, \mathscr O_{\widetilde X}(-E)) = 0$ for $q > 0$ by Theorem \ref{theorem du Bois properties}\ref{theorem right triangle log zero du Bois}, since $X$ has isolated Du Bois singularities. It follows that $$H^q(\widetilde X, \Omega_{\widetilde X}^1(\log E)(-E)) = (E_\infty^1)^{1,q} = ('E_\infty^1)^{1,q} = H^q(\widetilde X, \Omega_{\widetilde X}^1(\log E)) = 0$$ for $q > 1$, and $H^1(\widetilde X, \Omega_{\widetilde X}^1(\log E)(-E)) \hookrightarrow H^1(\widetilde X, \Omega_{\widetilde X}^1(\log E))$. \newline

\noindent \textbf{Step 2}. More generally, suppose we have shown that $H^j(\widetilde X,\Omega_{\widetilde X}^{n-p-1}(\log E)(-E)) = 0$ for $j \ge n-p-1$.  Another application of the spectral sequences $E_1^{n-p}$ and $'E_1^{n-p}$ shows $$H^j(\widetilde X,\Omega_{\widetilde X}^{n-p}(\log E)(-E)) \hookrightarrow H^j(\widetilde X, \Omega_{\widetilde X}^{n-p}(\log E))$$ for $j \ge n-p$, and  $H^j(\widetilde X, \Omega_{\widetilde X}^{n-p}(\log E)(-E)) = 0$ for $n-p < j < n+p$ by Theorem \ref{lemma local vanishing isolated singularities}. \newline 

\noindent \textbf{Step 3}. Next, we show that the vanishing also holds for the edge case $j = n-p$. To highlight the idea, we first show $$R^1\pi_*\Omega_{\widetilde X}^1(\log E)(-E) = 0.$$  For the complex manifold $\widetilde X$, the complex  $$0 \to i_!\mathbb C_{\widetilde X\setminus E} \to \mathscr O_{\widetilde X}(-E) \xrightarrow{d} \Omega_{\widetilde X}^1(\log E)(-E) \xrightarrow{d} \Omega_{\widetilde X}^2(\log E)(-E) \xrightarrow{d} ... \xrightarrow{d} \omega_{\widetilde X} \to 0$$ is exact, where $i: \widetilde X\setminus E \hookrightarrow \widetilde X$ is the inclusion.  We therefore get short exact sequences \begin{equation} \label{equation short exact sequence log zero sheaves}
       0 \to d\Omega_{\widetilde X}^{k-1}(\log E)(-E) \to \Omega_{\widetilde X}^k(\log E)(-E) \to d\Omega_{\widetilde X}^k(\log E)(-E) \to 0.
   \end{equation} By definition, the extended form $\widetilde \sigma$ is closed, and $\widetilde \sigma \wedge d\alpha = d(\widetilde \sigma \wedge \alpha)$ for any $k$-form $\alpha$, so (\ref{equation wedging morphisms obvious}) restricts to a map $$\widetilde \sigma: d\Omega_{\widetilde X}^k(\log E)(-E) \to d\Omega_{\widetilde X}^{k+2}(\log E)(-E).$$ Therefore, there is a commutative diagram \begin{equation}\label{equation diagram log zero sheaves and differentials} \begin{tikzcd} 
      R^j \pi_*(d\Omega_{\widetilde X}^{n-p -1}(\log E)(-E)) \arrow{r} \arrow{d}{\sigma^p} & R^j\pi_*\Omega_{\widetilde X}^{n-p}(\log E)(-E) \arrow{r} \arrow{d}{\sigma^p} & R^j\pi_*(d\Omega_{\widetilde X}^{n-p}(\log E)(-E)) \arrow{d}{\sigma^p} \\
      R^j\pi_*(d\Omega_{\widetilde X}^{n+p-1}(\log E)(-E)) \arrow{r} & R^j\pi_*\Omega_{\widetilde X}^{n+p}(\log E)(-E) \arrow{r} & R^j\pi_*(d\Omega_{\widetilde X}^{n+p}(\log E)(-E))
   \end{tikzcd}\end{equation} obtained by wedging with $\widetilde \sigma^p$.  In particular, there is a diagram \[ \begin{tikzcd}
      R^j \pi_*(d\mathscr O_{\widetilde X}(-E)) \arrow{r} \arrow{d}{\sigma^p} & R^j\pi_*\Omega_{\widetilde X}^{1}(\log E)(-E) \arrow{r} \arrow{d}{\sigma^p} & R^j\pi_*(d\Omega_{\widetilde X}^{1}(\log E)(-E)) \arrow{d}{\sigma^p} \\
      R^j\pi_*(d\Omega_{\widetilde X}^{2n-2}(\log E)(-E)) \arrow{r} & R^j\pi_*\Omega_{\widetilde X}^{2n-1}(\log E)(-E) \arrow{r} & R^j\pi_*(d\Omega_{\widetilde X}^{2n-1}(\log E)(-E))
   \end{tikzcd}\] By topological vanishing, the sheaves $R^j\pi_*(i_!\mathbb C_{\widetilde X\setminus E}) = 0$ for every $j$ \cite[Lemma 14.4]{greb2011differential}.  This implies that $R^j\pi_*(d\mathscr O_{\widetilde X}(-E)) = R^j\pi_*\mathscr O_{\widetilde X}(-E) = 0$ for $j > 0$, since the latter sheaves vanish by Theorem \ref{theorem du Bois properties}\ref{theorem right triangle log zero du Bois} and the fact that $X$ has at worst Du Bois singularities (or, our argument above).  Therefore, $R^j\pi_*\Omega_{\widetilde X}^1(\log E)(-E) \xrightarrow{\sim} R^j\pi_*(d\Omega_{\widetilde X}^1(\log E)(-E))$.  But see $R^1\pi_*(d\Omega_{\widetilde X}^{2n-1}(\log E)(-E)) = 0$ by Grauert-Riemenschneider vanishing, see Theorem \ref{theorem du Bois properties}\ref{theorem right triangle resolution exceptional divisor}. In particular, $$\mathrm{im}(R^1\pi_*\Omega_{\widetilde X}^1(\log E)(-E) \to R^1\pi_*\Omega_{\widetilde X}^{2n-1}(\log E)(-E)) = 0.$$ But the morphism $\mathbf R\pi_*\Omega_{\widetilde X}^1(\log E)(-E) \to \mathbf R\pi_*\Omega_{\widetilde X}^{2n-1}(\log E)(-E)$ factors through the quasi-isomorphism $L_\sigma^p: \mathbf R\pi_*\Omega_{\widetilde X}^{n-p}(\log E) \to \mathbf R\pi_*\Omega_{\widetilde X}^{n+p}(\log E)(-E)$.  In particular, $$R^1\pi_*\Omega_{\widetilde X}^1(\log E)(-E) \to R^1\pi_*\Omega_{\widetilde X}^{2n-1}(\log E)(-E)$$ is injective since $R^1\pi_*\Omega_{\widetilde X}^1(\log E)(-E) \to R^1\pi_*\Omega_{\widetilde X}^1(\log E)$ is injective.  This proves the claim. \newline 

  \noindent \textbf{Step 4}. Now we prove $R^{n-p}\pi_*\Omega_{\widetilde X}^{n-p}(\log E)(-E) = 0$ in general.  By (\ref{equation short exact sequence log zero sheaves}), we can assume that $$R^j\pi_*(d\Omega_{\widetilde X}^{n-p-1}(\log E)(-E)) = 0$$ for $j \ge n-p-1$ by induction.  By (\ref{equation diagram log zero sheaves and differentials}), it follows that \begin{equation*}
    \begin{split}
     \mathrm{im}(R^{n-p}&\pi_*\Omega_{\widetilde X}^{n-p}(\log E)(-E))\xrightarrow{\sigma^p} R^{n-p}\pi_*\Omega_{\widetilde X}^{n+p}(\log E)(-E)) \\& = \mathrm{im}(R^{n-p}\pi_*(d\Omega_{\widetilde X}^{n-p}(\log E)(-E)) \xrightarrow{\sigma^p} R^{n-p}\pi_*(d\Omega_{\widetilde X}^{n+p}(\log E)(-E))).   
    \end{split}
\end{equation*} An iterated application of Steenbrink vanishing shows $R^{n-p}\pi_*(d\Omega_{\widetilde X}^{n+p}(\log E)(-E)) = 0$, see \cite[Claim 14.7]{greb2011differential}, and so the image is zero.  But this morphism factors through $$R^j\pi_*\Omega_{\widetilde X}^{n-p}(\log E) \hookrightarrow R^j\pi_*\Omega_{\widetilde X}^{n+p}(\log E)(-E),$$ which is injective for $j < n+p$ by Theorem \ref{lemma local vanishing isolated singularities}, and again we see $R^{n-p}\pi_*\Omega_{\widetilde X}^{n+p}(\log E)(-E) = 0$. \newline 

\noindent \textbf{Step 5}. Now we prove $R^{n+p}\pi_*\Omega_{\widetilde X}^{n-p}(\log E)(-E) = 0$.  Note that $R^{2n-1}\pi_*\Omega_{\widetilde X}^1(\log E)(-E) = 0$ by (\ref{equation steenbrink vanishing extended}).  We also have exact sequences $$R^{2n-2}\pi_*\Omega_{\widetilde X}^1(\log E)(-E) \to R^{2n-2}\pi_*(d\Omega_{\widetilde X}^1(\log E)(-E)) \to R^{2n-1}\pi_*(d\mathscr O_{\widetilde X}(-E))$$ and $$R^{2n-2}\pi_*(d\Omega_{\widetilde X}^1(\log E)(-E)) \to R^{2n-2}\pi_*\Omega_{\widetilde X}^2(\log E)(-E) \to R^{2n-2}\pi_*(d\Omega_{\widetilde X}^2(\log E)(-E)).$$ We have seen the first and last terms of the first sequence vanish, and $R^{2n-2}\pi_*(d\Omega_{\widetilde X}^1(\log E)(-E)) = 0$.  This implies $R^{2n-2}\pi_*\Omega_{\widetilde X}^2(\log E)(-E) = 0$ since the last term of the second sequence vanishes, again by \cite[Claim 14.7]{greb2011differential}. Proceeding by induction, the claim will follow if we can show $$R^{n+p-1}\pi_*(d\Omega_{\widetilde X}^{n-p}(\log E)(-E)) = 0$$ for each $0 < p \le n$.  But this follows from $$R^{n+p-1}\pi_*\Omega_{\widetilde X}^{n-p}(\log E)(-E) \to R^{n+p-1}\pi_*(d\Omega_{\widetilde X}^{n-p}(\log E)(-E)) \to R^{n+p}\pi_*(d\Omega_{\widetilde X}^{n-p-1}(\log E)(-E)),$$ since the first term vanishes by assumption and the last term vanishes by an iterated application of Steenbrink vanishing.
\end{proof}

In addition, we get the following edge case for free:

\begin{corollary} \label{corollary extra vanishing}
If $X$ is a symplectic variety of dimension $2n$ with isolated singularities, then $$R^n\pi_*\Omega_{\widetilde X}^n(\log E)(-E) = 0.$$
\end{corollary}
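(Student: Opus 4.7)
The vanishing $R^n\pi_*\Omega_{\widetilde X}^n(\log E)(-E) = 0$ is the single ``diagonal'' case of Steenbrink-type vanishing not covered by the preceding results: Theorem \ref{lemma steenbrink vanishing isolated singularities} treats $R^j\pi_*\Omega_{\widetilde X}^{n-p}(\log E)(-E)$ for $0 < p \le n$, and the general Steenbrink vanishing handles $j + k > 2n$, leaving exactly $(j,k) = (n,n)$ uncovered. My plan is to extend the inductive device used in Steps 3--5 of the proof of that theorem: exploit the short exact sequences
$$0 \to d\Omega_{\widetilde X}^{k-1}(\log E)(-E) \to \Omega_{\widetilde X}^k(\log E)(-E) \to d\Omega_{\widetilde X}^k(\log E)(-E) \to 0$$
coming from exactness of the twisted de Rham complex, which resolves $i_!\mathbb C_{\widetilde X \setminus E}$.

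Applying the long exact sequence for $\mathbf R\pi_*$ to the sequence with $k = n$, the desired vanishing reduces to the two statements $R^n\pi_*(d\Omega_{\widetilde X}^{n-1}(\log E)(-E)) = 0$ and $R^n\pi_*(d\Omega_{\widetilde X}^n(\log E)(-E)) = 0$. I would establish these by two inductions, one propagating inward from the top of the complex and one from the bottom, meeting at the middle degree.

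For $R^n\pi_*(d\Omega_{\widetilde X}^n(\log E)(-E))$, I iterate downward in $k$. The base case $j = 1$, $k = 2n-1$ follows from $d\Omega_{\widetilde X}^{2n-1}(\log E)(-E) = \omega_{\widetilde X}$ (surjectivity at the end of the de Rham resolution) combined with Grauert-Riemenschneider. The long exact sequences, together with the Steenbrink vanishing $R^j\pi_*\Omega_{\widetilde X}^{2n-j+1}(\log E)(-E) = 0$, then propagate this successively to $R^j\pi_*(d\Omega_{\widetilde X}^{2n-j}(\log E)(-E)) = 0$ for $j = 1, 2, \ldots, n$.

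For $R^n\pi_*(d\Omega_{\widetilde X}^{n-1}(\log E)(-E))$, I iterate upward, starting from $R^{2n-1}\pi_*(d\mathscr O_{\widetilde X}(-E)) = 0$; this base case follows from the long exact sequence associated to $0 \to i_!\mathbb C_{\widetilde X \setminus E} \to \mathscr O_{\widetilde X}(-E) \to d\mathscr O_{\widetilde X}(-E) \to 0$, using topological vanishing and rational singularities (via $\mathbf R\pi_*\mathscr O_{\widetilde X}(-E) \cong \mathscr I_\Sigma$). The inductive step combines the long exact sequence with the vanishings $R^{n+k}\pi_*\Omega_{\widetilde X}^{n-k-1}(\log E)(-E) = 0$ from Theorem \ref{lemma steenbrink vanishing isolated singularities}, eventually producing $R^{n+1}\pi_*(d\Omega_{\widetilde X}^{n-2}(\log E)(-E)) = 0$. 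Combined with $R^n\pi_*\Omega_{\widetilde X}^{n-1}(\log E)(-E) = 0$ (the $p = 1$ case of the theorem), this gives the remaining vanishing. The argument is essentially routine, with the main obstacle being the combinatorial care required to ensure both inductions---Grauert-Riemenschneider at the top and topological vanishing at the bottom---propagate cleanly to meet at the middle degree.
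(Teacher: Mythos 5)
Your argument is correct, but it is packaged differently from the paper's. The paper shrinks $X$, runs the first-quadrant spectral sequence $E_1^{p,q} = H^q(\widetilde X, \Omega_{\widetilde X}^p(\log E)(-E)) \Rightarrow \mathbb H^{p+q}(\widetilde X, \Omega_{\widetilde X}^\bullet(\log E)(-E)) = 0$ (the abutment vanishing by topological vanishing for $i_!\mathbb C_{\widetilde X\setminus E}$), and observes that every differential into or out of $E_1^{n,n}$ has source or target killed by Theorem \ref{lemma steenbrink vanishing isolated singularities} or classical Steenbrink vanishing, so $E_1^{n,n} = E_\infty^{n,n} = 0$ in one stroke. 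Your dévissage through the subquotients $d\Omega_{\widetilde X}^k(\log E)(-E)$ is the unrolled, term-by-term version of that same spectral sequence (it is the spectral sequence of the stupid filtration made explicit), and your bookkeeping is sound: the downward induction $R^j\pi_*(d\Omega_{\widetilde X}^{2n-j}(\log E)(-E)) = 0$ propagates correctly from Grauert--Riemenschneider at $j=1$ using $R^j\pi_*\Omega_{\widetilde X}^{2n-j+1}(\log E)(-E) = 0$, and the upward induction from $R^{2n-1}\pi_*(d\mathscr O_{\widetilde X}(-E)) = 0$ closes at $R^n\pi_*(d\Omega_{\widetilde X}^{n-1}(\log E)(-E)) = 0$ using exactly the vanishings $R^{n+k}\pi_*\Omega_{\widetilde X}^{n-k-1}(\log E)(-E) = 0$ supplied by Theorem \ref{lemma steenbrink vanishing isolated singularities}. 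What the paper's version buys is economy: no explicit induction, no separate appeal to Grauert--Riemenschneider (only the vanishing of the abutment and of the neighboring $E_1$- and $E_r$-entries), and the argument visibly isolates $(n,n)$ as the unique surviving spot. What yours buys is transparency about precisely which previously established vanishings enter at each stage, at the cost of the two-sided combinatorial care you flag.
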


\begin{proof}
    By shrinking $X$ as necessary, we only need to prove $H^n(\widetilde X, \Omega_{\widetilde X}^n(\log E)(-E)) = 0$.  In this case, there is a spectral sequence $$E_1^{p,q} = H^q(\widetilde X, \Omega_{\widetilde X}^p(\log(E)(-E)) \Rightarrow \mathbb H^{p + q}(\widetilde X, \Omega_{\widetilde X}^\bullet(\log E)(-E)) = 0.$$ By Theorem \ref{lemma steenbrink vanishing isolated singularities} and Steenbrink vanishing, it follows the differentials with source or target $$E_1^{n,n} = H^n(\widetilde X, \Omega_{\widetilde X}^n(\log E)(-E))$$ are 0, and $$H^n(\widetilde X, \Omega_{\widetilde X}^n(\log E)(-E)) = E_\infty^{n,n} = 0.$$
\end{proof}

\begin{remark} \label{remark zero map}
In the proof of Theorem \ref{lemma steenbrink vanishing isolated singularities}, we used the fact that the maps $$R^{j}\pi_*\Omega_{\widetilde X}^{j}(\log E)(-E) \to R^{j}\pi_*\Omega_{\widetilde X}^{n+p}(\log E)(-E)$$ are the zero morphisms for $j = n-p$.  This actually extends if we assume the $k$-Du Bois property.  Suppose that $X$ is $(n-p-1)$-Du Bois. One checks using the proof of Theorem \ref{lemma steenbrink vanishing isolated singularities} that  \begin{equation*}
    \begin{split}
     \mathrm{im}(R^{j}\pi_*\Omega_{\widetilde X}^{n-p}(\log E)&(-E)) \xrightarrow{\sigma^p} R^{j}\pi_*\Omega_{\widetilde X}^{n-p}(\log E)(-E)) \\& = \mathrm{im}(R^{j}\pi_*(d\Omega_{\widetilde X}^{n+p}(\log E)(-E)) \xrightarrow{\sigma^p} R^{j}\pi_*(d\Omega_{\widetilde X}^{n+p}(\log E)(-E))).   
    \end{split}
\end{equation*} for $j > 0$. The sheaves $R^j\pi_*(d\Omega_{\widetilde X}^{n+p}(\log E)(-E))$ don't need to vanish, but we claim that the morphism is zero anyway. Indeed, in every neighborhood of a point $x \in X$, a section $\alpha_{n+p}$ of $\Omega_{\widetilde X}^{n+p}(\log E)(-E)$ can be written as $\widetilde \sigma^p \wedge \alpha_{n-p}$ for some local section $\alpha_{n-p}$ of $\Omega_{\widetilde X}^{n-p}(\log E)(-E)$.  This follows since $\alpha_{n+p}$ is a unique extension of a form on the regular locus $U$, which is symplectic. 
 By taking cohomology classes, it follows that the image of second morphism is the same as the image $$\mathrm{im}(R^j\pi_*(d\Omega_{\widetilde X}^{n-p-2}(\log E)(-E)) \xrightarrow{\sigma^{p+1}} R^j\pi_*(d\Omega_{\widetilde X}^{n+p}(\log E)(-E))),$$ and the sheaves $R^j\pi_*(d\Omega_{\widetilde X}^{n-p-2}(\log E)(-E))$ vanish by the higher Du Bois property.

The argument used in Theorem \ref{lemma steenbrink vanishing isolated singularities} breaks down quickly if we try and prove a symplectic variety is $k$-Du Bois for $k > 1$.  Indeed, the spectral sequence $'E_1^{n-k}$ has too many non-zero terms in low degree, and we cannot confirm that the maps $R^j\pi_*\Omega_{\widetilde X}^{n-p}(\log E)(-E) \to R^j\pi_*\Omega_{\widetilde X}^{n-p}(\log E)$ are injective.  But it turns out that assuming $R^j\pi_*\Omega_{\widetilde X}^{n-p}(\log E) = 0$ for $j > 0$ gives the desired vanishing in the next degree.  
\end{remark}

\begin{theorem}
If $X$ is a symplectic variety with isolated singularities with $k$-rational singularities, then $X$ is $(k+1)$-Du Bois.  If $X$ is $n$-rational, then it is $2n$-rational.
\end{theorem}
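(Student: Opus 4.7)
The plan is to combine the symmetry framework of Theorem \ref{theorem main} with the spectral-sequence and wedging arguments developed in the proof of Theorem \ref{lemma steenbrink vanishing isolated singularities} and Remark \ref{remark zero map}. Since $k$-rationality implies $k$-Du Bois by \cite[Theorem B]{shen2023k}, the first implication reduces (via isolated singularities and Theorem \ref{theorem du Bois properties}\ref{theorem right triangle log zero du Bois}) to the single vanishing $R^j\pi_*\Omega_{\widetilde X}^{k+1}(\log E)(-E) = 0$ for $j > 0$. First, I would adapt Step 1 of the proof of Theorem \ref{lemma steenbrink vanishing isolated singularities}: comparing the spectral sequences for the truncated filtered complexes $\mu_{\le k+1}\Omega_{\widetilde X}^\bullet(\log E)(-E)$ and $\mu_{\le k+1}\Omega_{\widetilde X}^\bullet(\log E)$ via Friedman's injectivity lemma \cite[Lemma 3.1]{friedman2022higher}, together with the vanishing of the interior terms coming from $k$-Du Bois (on the $(-E)$-twisted side) and $k$-rationality (on the untwisted side), yields an inclusion
\[
R^j\pi_*\Omega_{\widetilde X}^{k+1}(\log E)(-E) \hookrightarrow R^j\pi_*\Omega_{\widetilde X}^{k+1}(\log E) \quad \text{for every } j > 0.
\]
Next, since $X$ is $k$-Du Bois, the wedging argument of Remark \ref{remark zero map} shows that when $k + 1 < n$, the map $\widetilde\sigma^{n-k-1}$ acting on $R^j\pi_*$ is the zero morphism for $j > 0$. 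This map factors through the inclusion above composed with the injection $R^j\pi_*\Omega_{\widetilde X}^{k+1}(\log E) \hookrightarrow R^j\pi_*\Omega_{\widetilde X}^{2n-k-1}(\log E)(-E)$ from Step 1 of the proof of Theorem \ref{lemma local vanishing isolated singularities}, forcing $R^j\pi_*\Omega_{\widetilde X}^{k+1}(\log E)(-E) = 0$ for $0 < j < 2n-k-1$. Steenbrink vanishing handles $j > 2n-k-1$, and the edge case $j = 2n-k-1$ follows from a spectral sequence argument parallel to Corollary \ref{corollary extra vanishing} on a shrunken Stein neighborhood, where the topological vanishing of $\mathbb H^\bullet(\widetilde X, \Omega_{\widetilde X}^\bullet(\log E)(-E))$ combined with the previously established vanishings forces the last potentially non-zero term to die.

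For the second implication, the stronger input makes everything line up cleanly: $n$-rationality guarantees $R^j\pi_*\Omega_{\widetilde X}^{n-p}(\log E) = 0$ for every $0 \le p \le n$ and $j > 0$, so Step 2 of the proof of Theorem \ref{lemma local vanishing isolated singularities} forces $L_\sigma^p: \mathbb D_X(\underline \Omega_X^{n+p}) \xrightarrow{\sim} \underline \Omega_X^{n+p}$ to be a quasi-isomorphism for each $0 \le p \le n$. Chaining this with the $n$-rational isomorphism $\Omega_X^{[n-p]} \xrightarrow{\sim} \mathbb D_X(\underline \Omega_X^{n+p})$ and the symplectic wedge $\sigma^p: \Omega_X^{[n-p]} \xrightarrow{\sim} \Omega_X^{[n+p]}$ yields $\Omega_X^{[n+p]} \xrightarrow{\sim} \underline \Omega_X^{n+p}$ for each $0 \le p \le n$; combined with $n$-Du Bois this gives $2n$-Du Bois. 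A parallel diagram chase, now using $2n$-Du Bois to identify every $\underline \Omega_X^q$ with its reflexive hull, together with the symplectic symmetry and the $L_\sigma^{n-q}$ quasi-isomorphisms, upgrades this to $2n$-rationality by verifying $\Omega_X^{[p]} \cong \mathbb D_X(\underline \Omega_X^{2n-p})$ for $p > n$ through the chain $\mathbb D_X(\underline \Omega_X^{2n-p}) \cong \underline \Omega_X^{2n-p} \cong \Omega_X^{[2n-p]} \cong \Omega_X^{[p]}$.

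The main obstacle will be the middle-degree case $k + 1 = n$ of the first implication, where Remark \ref{remark zero map}'s wedging becomes the identity and the three-step argument degenerates. In this regime, I would supplement the spectral-sequence injection with the $L_\sigma^p$ quasi-isomorphisms (available from $(n-1)$-rationality for $1 \le p \le n-1$) to obtain $\Omega_X^{[n+p]} \cong \underline \Omega_X^{n+p}$ for $p \ge 1$, and then bootstrap via a residue sequence or the Hodge-to-de Rham degeneration on the shrunken neighborhood to extract the remaining middle piece $\Omega_X^{[n]} \cong \underline \Omega_X^n$ from the established Du Bois conditions on all adjacent pieces.
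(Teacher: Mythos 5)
This is essentially the paper's own proof: deduce $k$-Du Bois from $k$-rational, obtain the injection $R^j\pi_*\Omega_{\widetilde X}^{k+1}(\log E)(-E) \hookrightarrow R^j\pi_*\Omega_{\widetilde X}^{k+1}(\log E)$ from the pair of filtered spectral sequences $E_1^{k+1}$, $'E_1^{k+1}$, kill the composite with the wedge map via Remark \ref{remark zero map} together with the injectivity from Step 1 of Theorem \ref{lemma local vanishing isolated singularities}, and use the $L_\sigma^p$ quasi-isomorphisms plus the duality triangles for the second implication. The only deviations are cosmetic: your separate treatment of the edge case $j = 2n-k-1$ is redundant, since Theorem \ref{lemma steenbrink vanishing isolated singularities} already gives $R^j\pi_*\Omega_{\widetilde X}^{k+1}(\log E)(-E) = 0$ unconditionally for all $j \ge k+1$, and the degeneration you rightly flag at $k+1 = n$ (where the wedge in Remark \ref{remark zero map} becomes the identity) is a genuine subtlety that the paper's own proof passes over equally quickly, resolving the middle degree only via the Corollary \ref{corollary extra vanishing}-style spectral sequence you also propose.
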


\begin{proof}
Assume $X$ is $k$-rational.  By \cite[Theorem 3.2]{friedman2022higher}, a $k$-rational singularity is $k$-Du Bois\footnote{Specifically, they prove that the vanishing $\mathscr H^j\mathbb D_X(\underline \Omega_X^{\dim X-p}) = 0$ for $j > 0$ implies $\mathscr H^j\underline \Omega_X^p = 0$, which is enough for our definition.}.  Proceeding by induction, Remark \ref{remark zero map} implies that the morphisms $$R^j\pi_*\Omega_{\widetilde X}^{k+1}(\log E)(-E) \to R^j\pi_*\Omega_{\widetilde X}^{2n-k-1}(\log E)(-E)$$ are the zero morphisms for $j > 0$.  The assumption $R^j\pi_*\Omega_{\widetilde X}^p(\log E) = 0$ for $p \le k$ and $j > 0$ implies the morphisms $$R^j\pi_*\Omega_{\widetilde X}^{k+1}(\log E)(-E) \to R^j\pi_*\Omega_{\widetilde X}^{k+1}(\log E)$$ are injective by considering again the spectral sequences $E_1^{k+1}$ and $'E_1^{k+1}$ of Theorem \ref{lemma steenbrink vanishing isolated singularities}.  Therefore, $R^j\pi_*\Omega_{\widetilde X}^{k+1}(\log E)(-E) = 0$ for $j > 0$ by Theorem \ref{lemma local vanishing isolated singularities}.

For the second part, Theorem \ref{lemma local vanishing isolated singularities} and the first part imply $\underline \Omega_X^{k}$ is a sheaf for $k \ne n$.  But a similar application of the spectral sequence in Corollary \ref{corollary extra vanishing} implies $\underline \Omega_X^n$ is a sheaf, too (in particular, $X$ is $2n$-Du Bois).  By considering the triangles in (\ref{equation commutative diagram}), this implies $\mathbf R\mathscr Hom_{\mathscr O_X}(\pi_*\Omega_{\widetilde X}^k, \mathscr O_X)$ is a sheaf for each $k$.  The vanishing $R^j\pi_*\Omega_{\widetilde X}^{n+p}(\log E) = 0$ for $j > 0$ follows from (\ref{equation commutative diagram log E to log E}).  
\end{proof}

It seems that symplectic varieties with isolated singularities should be $k$-Du Bois for each $k < n$, although it is not true that $X$ is $k$-Du Bois for all $k$.  This is because symplectic varieties are not generally $k$-rational for $k < n$, as in the case $X$ admits a proper symplectic resolution of singularities (\S \ref{section symplectic resolutions}).  We do show $X$ is $(n-1)$-Du Bois for a symplectic germ $(X,x)$ admitting a proper symplectic resolution of singularities (Theorem \ref{theorem symplectic resolution k du bois}), where $\mathscr H^j\mathbb D_X(\underline \Omega_X^{n+p}) \ne 0$ if and only if $j = n-p$; if this holds for an arbitrary symplectic variety, then the desired injectivity of Remark \ref{remark zero map} holds.

\begin{corollary} \label{corollary vanishing higher log E}
If $X$ is a symplectic variety of dimension $2n$ with isolated singularities, then $$R^j\pi_*\Omega_{\widetilde X}^{n+p}(\log E) = 0$$ for each $n-p < j < n+p-1$.
\end{corollary}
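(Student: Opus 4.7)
The plan is to combine Grothendieck duality with the symplectic symmetry of Proposition \ref{proposition reflexivity and symmetry of h^0's} and the local vanishing of Theorem \ref{lemma local vanishing isolated singularities}. The key idea is to identify both $R^j\pi_*\Omega_{\widetilde X}^{n+p}(\log E)$ and $R^j\pi_*\Omega_{\widetilde X}^{n-p}(\log E)$ with the same $\mathscr Ext$ group of a reflexive sheaf in overlapping ranges, after which the local vanishing forces the desired conclusion.

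First, using (\ref{equation duality isomorphisms}), Theorem \ref{theorem du Bois properties}\ref{theorem right triangle log zero du Bois}, and the vanishing $\underline \Omega_\Sigma^k = 0$ for $k \ge 1$ (since $\Sigma$ is finite), I would identify
\[
\mathbf R\pi_*\Omega_{\widetilde X}^{n+p}(\log E) \cong \mathbb D_X\bigl(\mathbf R\pi_*\Omega_{\widetilde X}^{n-p}(\log E)(-E)\bigr) \cong \mathbb D_X(\underline \Omega_X^{n-p}),
\]
and similarly $\mathbf R\pi_*\Omega_{\widetilde X}^{n-p}(\log E) \cong \mathbb D_X(\underline \Omega_X^{n+p})$. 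By Theorem \ref{lemma steenbrink vanishing isolated singularities} combined with Proposition \ref{proposition reflexivity and symmetry of h^0's}, the complex $\underline \Omega_X^{n-p}$ is concentrated in cohomological degrees $[0, n-p-1]$ with $\mathscr H^0 = \Omega_X^{[n-p]}$ reflexive and the higher sheaves of zero-dimensional support; classical Steenbrink vanishing (\ref{equation steenbrink vanishing general}) gives the same for $\underline \Omega_X^{n+p}$ except that its higher cohomology can extend to degree $n-p$. Running the $\mathbf R\mathscr Hom$-spectral sequences on each of these complexes exactly as in Step 1 of the proof of Theorem \ref{lemma local vanishing isolated singularities}, Lemma \ref{lemma dim support ischebeck} collapses the negative rows to the column $a = 2n$, and an index count for the differentials $d_r: E_r^{a,0} \to E_r^{a+r, -r+1}$ shows they can only be non-trivial once $a \ge n+p$ (respectively $a \ge n+p-1$) in the sequence for $\mathbb D_X(\underline \Omega_X^{n-p})$ (respectively $\mathbb D_X(\underline \Omega_X^{n+p})$). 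This yields
\[
R^j\pi_*\Omega_{\widetilde X}^{n+p}(\log E) \cong \mathscr Ext^j(\Omega_X^{[n-p]}, \mathscr O_X) \quad (j \le n+p-1),
\]
\[
R^j\pi_*\Omega_{\widetilde X}^{n-p}(\log E) \cong \mathscr Ext^j(\Omega_X^{[n+p]}, \mathscr O_X) \quad (j \le n+p-2).
\]

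Chaining these identifications with the symplectic isomorphism $\sigma^p: \Omega_X^{[n-p]} \xrightarrow{\sim} \Omega_X^{[n+p]}$ gives
\[
R^j\pi_*\Omega_{\widetilde X}^{n+p}(\log E) \cong R^j\pi_*\Omega_{\widetilde X}^{n-p}(\log E) \quad \text{for every } j \le n+p-2,
\]
and Theorem \ref{lemma local vanishing isolated singularities} finishes the proof on the range $n-p < j \le n+p-2$. The main subtlety is the asymmetry in the upper bounds of the two spectral sequence identifications: the potentially non-zero differential $d_{n-p+1}: E_{n-p+1}^{n+p-1, 0} \to E_{n-p+1}^{2n, -(n-p)}$ in the sequence for $\mathbb D_X(\underline \Omega_X^{n+p})$ is precisely what prevents pushing the vanishing to $j = n+p-1$, and removing this obstruction would require extending Theorem \ref{lemma steenbrink vanishing isolated singularities} to the sheaves $\Omega_{\widetilde X}^{n+p}(\log E)(-E)$.
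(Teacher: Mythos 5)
Your proposal is correct and follows essentially the same route as the paper: both arguments identify $\mathbf R\pi_*\Omega_{\widetilde X}^{n\pm p}(\log E)$ with the Grothendieck dual of $\mathbf R\pi_*\Omega_{\widetilde X}^{n\mp p}(\log E)(-E)$, use Lemma \ref{lemma dim support ischebeck} together with Theorem \ref{lemma steenbrink vanishing isolated singularities} and classical Steenbrink vanishing to reduce to $\mathscr Ext^j$ of the reflexive sheaves $\Omega_X^{[n\mp p]}$ in the stated (asymmetric) ranges, transport along $\sigma^p$, and conclude via Theorem \ref{lemma local vanishing isolated singularities}. The only difference is bookkeeping — you run the hyperext spectral sequence on the full complex where the paper splits off $\mathscr H^0$ via the truncation triangle in (\ref{equation commutative diagram log E to log E}) — and your identification of the differential obstructing the case $j=n+p-1$ matches the paper's discussion of why the vanishing should not extend further.
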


\begin{proof}
There is an analogous diagram to (\ref{equation commutative diagram}) \begin{equation} \label{equation commutative diagram log E to log E} 
\begin{tikzcd}
    \mathbb D_X(\tau_{\ge 1}\mathbf R\pi_*\Omega_{\widetilde X}^{n+p}(\log E)(-E)\arrow{d} \arrow{r}{(\tau_{\ge 1}\mathbf R\pi_*\widetilde{\sigma}^p)^*}~~~~~ & ~~~~~ \mathbb D_X(\tau_{\ge 1} \mathbf R\pi_*\Omega_{\widetilde X}^{n-p}(\log E)(-E)) \arrow{d} \\
    \mathbf R\pi_* \Omega_{\widetilde X}^{n-p}(\log E) \arrow{r}{(\mathbf R\pi_*\widetilde \sigma^p)^*} \arrow{d} & \mathbf R\pi_*\Omega_{\widetilde X}^{n+p}(\log E) \arrow{d}\\
    \mathbb D_X(\pi_*\Omega_{\widetilde X}^{n+p}) \arrow{r}{(L_\sigma^p)^*} & \mathbb D_X(\pi_*\Omega_{\widetilde X}^{n-p}),
    \end{tikzcd}
    \end{equation}
By Theorem \ref{lemma steenbrink vanishing isolated singularities}, Steenbrink vanishing, and the (analogous) spectral sequence (\ref{equation important spectral sequence}), the cohomology sheaves of the duals of the truncated complexes vanish in degree $n+p >j > n-p$.  Thus, $R^j\pi_*\Omega_{\widetilde X}^{n-p}(\log E) \to R^j\pi_*\Omega_{\widetilde X}^{n+p}(\log E)$ is an isomorphism for $j < n-p-1$, and the claim follows from Theorem \ref{lemma local vanishing isolated singularities}.
\end{proof}

\subsection{Proof of Main Theorem: general case}\label{subsection general case} We now aim to prove Theorem \ref{theorem main} for a general symplectic variety.  The idea is to use the local product decomposition and reduce the problem down to a set whose support has dimension zero.  

\begin{theorem} \label{theorem local vanishing general}
Let $X$ be a symplectic variety.  Then $$R^j\pi_*\Omega_{\widetilde X}^1(\log E) = 0$$ for $j > 1$.
\end{theorem}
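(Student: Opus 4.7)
The plan is to localize the question on $X$ and reduce to the isolated-singularity case established in Theorem \ref{lemma local vanishing isolated singularities}, using the local product decomposition along the symplectic stratification (Proposition \ref{proposition symstrat} and Example \ref{example product decomposition}). Vanishing of $R^j\pi_*\Omega^1_{\widetilde X}(\log E)$ is a local question on $X$, and by \eqref{equation duality isomorphisms} the complex $\mathbf R\pi_*\Omega^1_{\widetilde X}(\log E) \cong \mathbb D_X(\underline{\Omega}_X^{2n-1})$ is independent of the chosen log-resolution (since $\dim \Sigma < 2n-1$); it thus suffices to produce a convenient resolution in an analytic neighborhood of an arbitrary point $x \in X$ and verify the vanishing there. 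For $x$ regular there is nothing to show, so I assume $x$ lies in a positive-codimensional stratum $X_i^\circ$ with $i \geq 1$.

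After shrinking to an analytic neighborhood of $x$, Proposition \ref{proposition symstrat} furnishes a product decomposition $X \cong Y_x \times U_x$, where $U_x$ is smooth of even dimension $2m' = \dim X_i^\circ$ and $Y_x$ is a symplectic variety of dimension $2m := 2n - 2m' \ge 2$ whose only singular point is $x$. Choose a log-resolution $\pi_x : \widetilde Y_x \to Y_x$ with exceptional divisor $E_x$; then $\pi := \pi_x \times \mathrm{id}_{U_x}$ is a log-resolution of $X$ with snc exceptional divisor $E = E_x \times U_x$, the logarithmic cotangent sheaf splits as
\[ \Omega^1_{\widetilde Y_x \times U_x}(\log E) \;\cong\; \mathrm{pr}_1^*\,\Omega^1_{\widetilde Y_x}(\log E_x) \;\oplus\; \mathrm{pr}_2^*\,\Omega^1_{U_x}, \]
and the projection formula yields
\[ R^j\pi_*\Omega^1_{\widetilde X}(\log E) \;\cong\; \bigl(R^j(\pi_x)_*\Omega^1_{\widetilde Y_x}(\log E_x)\bigr) \boxtimes \mathscr O_{U_x} \;\oplus\; \bigl(R^j(\pi_x)_*\mathscr O_{\widetilde Y_x}\bigr) \boxtimes \Omega^1_{U_x}. \]

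For $j > 1$ both summands vanish. The second summand vanishes for all $j > 0$ because $Y_x$ has rational singularities. For the first summand, Theorem \ref{lemma local vanishing isolated singularities} applied to the isolated symplectic singularity $Y_x$ (with $p = m-1$, assuming $m \ge 2$) gives the vanishing for $1 < j < 2m-1$; the edge case $j = 2m-1$ is covered by the local vanishing \eqref{equation local vanishing rational singularities} for the rational singularity $Y_x$; and $R^j(\pi_x)_* = 0$ for $j \ge 2m$ for dimension reasons. The remaining possibility $m = 1$ (where $Y_x$ is a Du Val singularity) is trivial, since $\dim \widetilde Y_x = 2$ forces $R^j(\pi_x)_* = 0$ for $j > 1$. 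The principal technical obstacle is the passage from the formal product decomposition of Proposition \ref{proposition symstrat} to an honest analytic one together with a product log-resolution; both are legitimate by the analytic refinement cited in the proof of Proposition \ref{proposition symstrat}, exactly as exploited in Example \ref{example product decomposition}.
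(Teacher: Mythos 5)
There is a genuine gap, and it sits in the sentence asserting that $Y_x$ is ``a symplectic variety \dots whose only singular point is $x$.'' Proposition \ref{proposition symstrat} gives the product decomposition $\widehat X_x \cong Y_x \times \widehat{X_i^\circ}_x$ for $x$ in \emph{any} stratum $X_i^\circ$, but the transversal slice $Y_x$ has an isolated singularity only when $x$ lies in the open stratum $X_1^\circ$ of the singular locus. For $x$ in a deeper stratum ($i \ge 2$), the singular locus of $Y_x \times U_x$ is $(Y_x)_{\mathrm{sing}} \times U_x$, and since components of $\Sigma$ of dimension strictly larger than $\dim X_i^\circ$ pass through $x$, the slice $Y_x$ has positive-dimensional singular locus. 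In that case Theorem \ref{lemma local vanishing isolated singularities} does not apply to $Y_x$, and your splitting of $R^j\pi_*\Omega^1_{\widetilde X}(\log E)$ into two summands, while correct, does not let you conclude. The extreme case is a point $x$ of a zero-dimensional deepest stratum through which higher-dimensional components of $\Sigma$ pass: there $U_x$ is a point, $Y_x = X$ locally, and the product decomposition gives no reduction at all. Your argument as written proves the theorem only for symplectic varieties with smooth singular locus.

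The paper closes this gap in two steps that your proposal is missing. First, it runs the product decomposition as an \emph{induction on dimension}: since $\dim Y_x < \dim X$ whenever $\dim U_x > 0$, the inductive hypothesis (not the isolated-singularity theorem) gives $R^j(\pi_x)_*\Omega^1_{\widetilde Y_x}(\log E_x) = 0$ for $j>1$, and this shows the sheaves $R^j\pi_*\Omega^1_{\widetilde X}(\log E)$, $j>1$, are supported on the residual set $\Sigma_0$ of points where $\dim U_x \le 0$, which is zero-dimensional. Second --- and this is the step with real content --- it reruns the duality and spectral-sequence argument of Theorem \ref{lemma local vanishing isolated singularities} for sheaves with zero-dimensional support: using Lemma \ref{lemma dim support ischebeck} and Steenbrink vanishing one checks the relevant $\mathscr Ext$ terms in (\ref{equation important spectral sequence}) vanish, giving the injection $R^j\pi_*\Omega^1_{\widetilde X}(\log E) \hookrightarrow R^j\pi_*\Omega^{2n-1}_{\widetilde X}(\log E)(-E)$ and hence the vanishing. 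You need both of these; citing Example \ref{example product decomposition} and the isolated case alone does not suffice.
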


\begin{proof}
Note that we have shown the theorem when $X$ has isolated singularities; in particular, the theorem holds for surfaces\footnote{This is obvious since symplectic surface singularities are equivalent to ADE surface singularities; in this case, the cohomology sheaves $R^j\pi_*\Omega_{\widetilde X}^k(\log E)$ all vanish for $j > 0$}.  The problem is local, so by Proposition \ref{proposition symstrat}, we can assume around each singular point $x$ that $X \cong Y_x \times U_x$, where $Y_x$ is a symplectic variety and $U_x$ is smooth.  An application of the K\"unneth formula implies \begin{equation} \label{equation product decomposition for local vanishing}
    \begin{split}
        R^j\pi_*\Omega_{\widetilde X}^k(\log E) & \cong R^j(\pi_x\times \mathrm{id})_*\Omega_{\widetilde Y_x\times U_x}^k(\log E) \\ & \cong \bigoplus_{r+s = k} R^j(\pi_x\times \mathrm{id})_*(\Omega_{\widetilde Y_x}^r(\log E_x)\otimes \Omega_{U_x}^s)\\ &\cong \bigoplus_{r + s = k} R^j(\pi_x)_*\Omega_{\widetilde Y_x}^r(\log E_x)
    \end{split}
    \end{equation} where again $\pi_x:\widetilde{Y_x} \to Y_x$ is a log-resolution of singularities with exceptional divisor $E_x$.  Note that $R^j(\pi_x)_*\mathscr O_{\widetilde Y_x} = 0$ for $j > 0$.  By induction, we can assume $R^j(\pi_x)_*\Omega_{\widetilde{Y_x}}^{1}(\log E_x) = 0$ for $j > 1$ as long as $\dim Y_x < \dim X$.  Note that $\dim Y_x = \dim X$ if and only if $\dim U_x \le 0$, whence we can assume the vanishing $R^j\pi_*\Omega_{\widetilde X}^{1}(\log E) = 0$ for $j > 1$ holds outside a set $\Sigma_0$ with $\dim \Sigma_0 \le 0$.

    Now the proof follows the structure of Theorem \ref{lemma local vanishing isolated singularities}. Returning to the spectral sequence (\ref{equation important spectral sequence}), we can see the terms $$E_2^{k.l} = R^k\mathscr Hom_{\mathscr O_X}(R^{-l}\tau_{\ge 1}\mathbf R\pi_*\Omega_{\widetilde X}^{2n-1}(\log E)(-E), \mathscr O_X) = 0$$ whenever $k + l = 1$.  Indeed, the only possible non-zero terms by Steenbrink vanishing are of the form $(k,l) = (2,-1)$.  But by assumption, $\mathrm{codim}_X(\Sigma) \ge 4$, and so $$R^2\mathscr Hom_{\mathscr O_X}(R^1\pi_*\Omega_{\widetilde X}^{2n-1}(\log E)(-E), \mathscr O_X) = 0.$$  A similar argument shows $R^1\mathscr Hom_{\mathscr O_X}(\tau_{\ge 1}\mathbf R\pi_*\Omega_{\widetilde X}^1(\log E), \mathscr O_X) = 0$, since we can assume $R^j\pi_*\Omega_{\widetilde X}^1(\log E)$ has zero dimensional support.  Thus, $R^1\pi_*\Omega_{\widetilde X}^1(\log E) \xrightarrow{\sim} R^1\pi_*\Omega_{\widetilde X}^{2n-1}(\log E)(-E)$. The proof now follows as in Theorem \ref{lemma local vanishing isolated singularities}, Step 2.
\end{proof}

We would like to extend Theorem \ref{theorem main} for arbitrary symplectic singularities in any degree, but this may not be true for dimension reasons, even with the local product structure.  Indeed, it may be the case in (\ref{equation product decomposition for local vanishing}) that we encounter a factor $R^j(\pi_x)_*\Omega_{\widetilde Y_x}^r(\log E_x)$, where $r > \frac{1}{2} \dim Y_x$.  For isolated singularities, we do not claim in Corollary \ref{corollary vanishing higher log E} that $R^j\pi_*\Omega_{\widetilde X}^{n+p}(\log E) = 0$ for $j > n-p$, and in fact this should not be the case.  As an example, suppose $R^j\pi_*\Omega_{\widetilde X}^{2n-1}(\log E) = 0$ for $j > 1$.  Since $\tau_{\ge 1}\mathbf R\pi_*\Omega_{\widetilde X}^{1}(\log E)(-E)$ is the trivial complex by Theorem \ref{lemma steenbrink vanishing isolated singularities}, we have a quasi-isomorphism $\mathbf R\pi_*\Omega_{\widetilde X}^{2n-1}(\log E) \xrightarrow{\sim} \mathbf R\mathscr Hom_{\mathscr O_X}(\pi_*\Omega_{\widetilde X}^1, \mathscr O_X)$. The proof of Theorem \ref{lemma local vanishing isolated singularities} shows $$R^j\pi_*\Omega_{\widetilde X}^1(\log E) \xrightarrow{\sim} R^j\mathscr Hom_{\mathscr O_X}(\pi_*\Omega_{\widetilde X}^{2n-1}, \mathscr O_X)$$ for $j = 0,1$, so the assumption would imply that $$\mathbf R\pi_*\Omega_{\widetilde X}^1(\log E) \cong_{\mathrm{qis}} \mathbf R\mathscr Hom_{\mathscr O_X}(\pi_*\Omega_{\widetilde X}^{2n-1}, \mathscr O_X),$$ and by duality this implies $\mathbf R\pi_*\Omega_{\widetilde X}^{2n-1}(\log E)(-E)$ is isomorphic to a sheaf.  This typically does not happen, see \S \ref{section symplectic resolutions}.

As a corollary of the Proof of Theorem \ref{lemma local vanishing isolated singularities}, here is a case where Theorem \ref{theorem main} can be extended.

\begin{proposition}
Let $X$ be a terminal symplectic variety of dimension $2n$.  If $\Omega_X^{[k]} \cong_{\mathrm{qis}} \underline \Omega_X^k$ for some $k > n$, then $\Omega_X^{[2n-k]} \cong_{\mathrm{qis}} \mathbb D_X(\underline \Omega_X^k)$.
\end{proposition}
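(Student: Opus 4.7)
The plan is to revisit the fundamental commutative square from \S\ref{subsection derived symmetries} that defines $L_\sigma^p$ (with $p = k - n > 0$):
\[\begin{tikzcd}
    \mathbb D_X(\underline \Omega_X^{k}) \arrow{r}{\alpha} \arrow{d}{L_\sigma^{p}} & \mathbf R\mathscr Hom_{\mathscr O_X}(\pi_*\Omega_{\widetilde X}^{k}, \mathscr O_X) \arrow{d}{(\sigma^p)^*}\\
    \underline \Omega_X^{k} \arrow{r}{\beta} & \mathbf R\mathscr Hom_{\mathscr O_X}(\pi_*\Omega_{\widetilde X}^{2n-k}, \mathscr O_X),
\end{tikzcd}\]
in which the right vertical arrow is the isomorphism dual to $\sigma^p : \pi_*\Omega_{\widetilde X}^{2n-k} \xrightarrow{\sim} \pi_*\Omega_{\widetilde X}^{k}$. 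The key observation is that $\alpha$ is obtained by applying $\mathbb D_X$ to the natural comparison $\pi_*\Omega_{\widetilde X}^k = \Omega_X^{[k]} \to \underline \Omega_X^k$ (the first equality via holomorphic extension), and the hypothesis says precisely that this comparison is a quasi-isomorphism. Hence $\alpha$ is an isomorphism.

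With both $\alpha$ and $(\sigma^p)^*$ isomorphisms, commutativity forces $\beta \circ L_\sigma^p$ to be an isomorphism, so $\beta$ is a split epimorphism in the derived category, with section $L_\sigma^p \circ (\beta \circ L_\sigma^p)^{-1}$. I will then use the elementary fact that for a split epimorphism $\beta : A \to B$, the induced maps $\mathscr H^j\beta$ are surjective for all $j$, so if $A$ is concentrated in degree $0$ then so is $B$. Thus $\mathbf R\mathscr Hom_{\mathscr O_X}(\Omega_X^{[2n-k]}, \mathscr O_X)$ is concentrated in degree $0$, i.e.\ $R^j\mathscr Hom_{\mathscr O_X}(\Omega_X^{[2n-k]}, \mathscr O_X) = 0$ for $j > 0$, and its $\mathscr H^0$ is identified with $\Omega_X^{[k]}$ via the reflexive wedge pairing (using Gorensteinness and reflexivity of $\Omega_X^{[2n-k]}$ and $\Omega_X^{[k]}$).

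Finally, transporting back through the isomorphism $(\sigma^p)^*$, I conclude that $\mathbb D_X(\underline \Omega_X^k) = \mathbf R\mathscr Hom_{\mathscr O_X}(\Omega_X^{[k]}, \mathscr O_X)$ is also concentrated in degree $0$; invoking the reflexive wedge pairing once more gives the desired isomorphism $\mathbb D_X(\underline \Omega_X^k) \cong \Omega_X^{[2n-k]}$.

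The main subtlety is the first step: verifying that $\alpha$ is indeed the dual of the natural comparison map $\Omega_X^{[k]} \to \underline \Omega_X^k$, which requires unwinding carefully the construction of $L_\sigma^p$ and the auxiliary commutative diagram in \S\ref{subsection derived symmetries}. Once this identification is pinned down, the remaining argument is essentially formal, relying only on split epimorphisms in triangulated categories preserving cohomological concentration. In particular, no Steenbrink vanishing or spectral sequence argument is required here.
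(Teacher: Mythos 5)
Your proposal is correct and is essentially the paper's own argument made explicit: the paper exhibits the composition $\Omega_X^{[2n-k]} \to \mathbb D_X(\underline \Omega_X^{k}) \to \underline \Omega_X^k \xrightarrow{\sim} \Omega_X^{[k]}$ as an isomorphism (the total map being the wedge with $\sigma^{k-n}$ on reflexive forms) and then says ``the result follows by applying $\mathbb D_X(-)$.'' Your split-epimorphism argument on the dual square, together with the observation that a retract of a complex concentrated in degree $0$ is concentrated in degree $0$, is precisely the content of that final dualization step, so the two proofs coincide in substance.
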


\begin{proof}
The proof is a generalization of the argument we give in \S\ref{subsection derived symmetries}.  By assumption, there is a composition $$\Omega_X^{[2n-k]} \to \mathbb D_X(\underline \Omega_X^{k}) \to \underline \Omega_X^k \xrightarrow{\sim} \Omega_X^{[k]}$$ which is an isomorphism.  The result follows by applying $\mathbb D_X(-)$.  
\end{proof}

\section{Hodge Theory of Crepant Morphisms} \label{section symplectic resolutions} Let $X$ be a symplectic variety.  A crepant morphism $\phi:Z \to X$ is a proper birational morphism which is an isomorphism outside $\Sigma$ and satisfies $\phi^*K_X = K_Z$.  In this case, $Z$ is also a symplectic variety.  An important case of crepant morphisms are \textit{symplectic resolutions}, where $Z$ is a holomorphic symplectic manifold.

Symplectic resolutions are an important class of morphisms since they provide new classes of compact hyperk\"ahler manifolds, and their geometry has been extensively studied.  By  \cite[Lemma 2.11]{kaledin2006symplectic}, symplectic resolutions are semismall, and the fibers $L_x = \phi^{-1}(x)$ have Hodge-Tate cohomology: this means that $H^k(L_x, \mathbb C)$ is pure and only supported in Hodge degree $(p,p)$.  In particular, $H^k(L_x, \mathbb C) = 0$ for odd $k$.  The idea of the latter uses the fact that $R^j\phi_*\Omega_Z^p = 0$ for $j + p > 2n$.  We generalize this slightly for crepant morphisms:

\begin{proposition}
If $\phi:Z \to X$ is a crepant morphism, then $$R^j\phi_*\underline \Omega_Z^{n+p} = 0$$ for $j > n-p$ and $0 < p \le n$.
\end{proposition}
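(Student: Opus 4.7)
The statement generalizes Kaledin's vanishing $R^j\phi_*\Omega_Z^{n+p}=0$ for $j>n-p$ (valid when $Z$ is a smooth symplectic variety and $\phi$ is a symplectic resolution) by replacing the sheaf $\Omega_Z^{n+p}$ with the Du Bois piece $\underline\Omega_Z^{n+p}$. The plan is to mimic the proof of that classical fact, using the semismallness of crepant morphisms between symplectic varieties together with the cohomological amplitude bound on $\underline\Omega_Z^{n+p}$ coming from Theorem \ref{theorem du Bois properties}\ref{item vanishing for du bois complex}. Indeed, by that theorem $\mathscr H^j\underline\Omega_Z^{n+p}=0$ for $j>n-p$, so $\underline\Omega_Z^{n+p}$ already sits in cohomological degrees $[0,n-p]$; the content of the proposition is that $\mathbf R\phi_*$ does not enlarge this range.

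The first thing I would check is the base case $p=n$, which follows immediately from the discussion in \S\ref{subsection derived symmetries}: $Z$ is Gorenstein with $\omega_Z\cong\mathscr O_Z$ and rational, so Theorem \ref{theorem du Bois properties}\ref{theorem right triangle resolution exceptional divisor} gives $\underline\Omega_Z^{2n}\cong\mathscr O_Z$, and $R^j\phi_*\mathscr O_Z=0$ for $j>0$ because $\phi$ is crepant between rational singularities. For the general case, my preferred route is to choose a log-resolution $\pi:\widetilde Z\to Z$ such that $\mu:=\phi\pi:\widetilde Z\to X$ is simultaneously a log-resolution of $X$. Writing $E_Z\subseteq F$ for the exceptional divisors of $\pi$ and $\mu$ respectively, Theorem \ref{theorem du Bois properties}\ref{theorem right triangle log zero du Bois} gives a triangle
\[\mathbf R\pi_*\Omega_{\widetilde Z}^{n+p}(\log E_Z)(-E_Z)\to \underline\Omega_Z^{n+p}\to\underline\Omega_{\Sigma_Z}^{n+p}\xrightarrow{+1}\]
on $Z$. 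Applying $R\phi_*$ reduces the claim to a bound on $R\mu_*\Omega_{\widetilde Z}^{n+p}(\log E_Z)(-E_Z)$ together with the analogous statement for $R(\phi|_{\Sigma_Z})_*\underline\Omega_{\Sigma_Z}^{n+p}$. The latter is handled by induction on dimension, since Proposition \ref{proposition symstrat} exhibits (the normalization of) each stratum of $\Sigma_Z$ as a lower-dimensional symplectic variety, and the base case of surface symplectic singularities is just Grauert--Riemenschneider vanishing for ADE singularities.

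For the Steenbrink-type term, I would invoke Kaledin's semismallness of crepant morphisms between symplectic varieties: $R^j\phi_*\mathscr G$ is supported in codimension $\ge 2j$ for any coherent $\mathscr G$ on $Z$. Combined with Lemma \ref{lemma dim support ischebeck}, Grothendieck duality on $X$ converts the pushforward $R\mu_*\Omega_{\widetilde Z}^{n+p}(\log E_Z)(-E_Z)$ into $R\mu_*\Omega_{\widetilde Z}^{n-p}(\log E_Z)$ in the dual range, and Steenbrink vanishing on $X$ with respect to $\mu$ gives $R^j\mu_*\Omega_{\widetilde Z}^{n+p}(\log F)(-F)=0$ for $j>n-p$. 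The divisor discrepancy $F\setminus E_Z$ consists of components pulled back from $\Sigma_X$, whose images under $\mu$ have codimension $\ge 2$ in $X$, so the semismallness bound allows one to bootstrap from the $F$-version of Steenbrink vanishing to the $E_Z$-version in the relevant range.

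The main obstacle is precisely this divisor discrepancy and the bookkeeping needed to extract the $(-E_Z)$-twist vanishing from the $(-F)$-twist vanishing. A cleaner alternative — which I would probably actually adopt in the write-up — is to reinterpret the whole argument through the intersection Hodge module of \S\ref{section intersection module}: since $\underline\Omega_Z^{n+p}$ is a direct summand (up to shift) of the graded pieces of the de Rham complex of $\mathrm{IC}_Z$, and since $R\phi_*$ of a pure Hodge module is again pure with a strict Hodge filtration, the perverse $t$-structure forces the cohomological amplitude of $R\phi_*\underline\Omega_Z^{n+p}$ to match that of $\underline\Omega_Z^{n+p}$ itself. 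Either route reduces the general vanishing to the combination of Theorem \ref{theorem du Bois properties}\ref{item vanishing for du bois complex} and the semismallness of $\phi$.
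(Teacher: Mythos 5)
Your proposal does not close the argument, and it misses the one idea that makes the paper's proof work. The paper uses semismallness only to bound the \emph{fiber dimension}: since $\phi$ is semismall (by \cite[Proposition 2.16]{tighe2022llv}), every fiber $L_x = \phi^{-1}(x)$ satisfies $\dim L_x \le n$, and therefore $\underline\Omega_{L_x}^{n+p} = 0$ for $p>0$ simply because $n+p$ exceeds the dimension of $L_x$. The actual argument is then short: the triangle of Theorem \ref{theorem du Bois properties}\ref{theorem right triangle resolution exceptional divisor} applied to $\phi$, together with the amplitude bound \ref{item vanishing for du bois complex} and Steenbrink vanishing, identifies $R^j\phi_*\underline\Omega_Z^{n+p}$ with $R^j\phi_*\underline\Omega_L^{n+p}$ for $j>n-p$, where $L$ is the exceptional locus; one then checks the latter vanishes on stalks by cohomology and base change together with the fiber vanishing just described. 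No duality, no comparison of exceptional divisors, and no induction over strata is needed.

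Both of your proposed routes have genuine gaps. First, your inductive treatment of $\underline\Omega_{\Sigma_Z}^{n+p}$ does not apply: $\phi|_{\Sigma_Z}$ is not a crepant morphism of symplectic varieties (it need not even be birational onto its image), so the statement you are proving gives no handle on $\mathbf R(\phi|_{\Sigma_Z})_*\underline\Omega_{\Sigma_Z}^{n+p}$, and the degree $n+p$ is not in the range dictated by $\dim\Sigma_Z$ that your inductive hypothesis would require. Second, the passage from Steenbrink vanishing for the exceptional divisor $F$ of $\mu=\phi\circ\pi$ to the needed vanishing for $E_Z$ is exactly the hard point, and you do not carry it out; the codimension-of-support form of semismallness does not obviously control the cone of $\Omega_{\widetilde Z}^{n+p}(\log E_Z)(-E_Z)\to\Omega_{\widetilde Z}^{n+p}(\log F)(-F)$ in the relevant degrees. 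Third, the ``cleaner alternative'' rests on a false premise: by (\ref{equation hodge module du bois}), $\underline\Omega_Z^{n+p}[-(n+p)]$ is a graded piece of the de Rham complex of $\mathbb Q_Z^H$, a complex of \emph{mixed} Hodge modules of which $\mathrm{IC}_Z$ is only the top weight-graded quotient of $H^{2n}$; it is not a direct summand of $\mathrm{gr}^F\mathrm{DR}(\mathcal{IC}_Z)$, so purity and the decomposition theorem for $\mathrm{IC}_Z$ do not bound the amplitude of $\mathbf R\phi_*\underline\Omega_Z^{n+p}$. Your base case $p=n$ is fine, but the general case needs the fiberwise dimensional vanishing.
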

\begin{proof}
By \cite[Proposition 2.16]{tighe2022llv}, any crepant morphism of symplectic varieties is semismall.  In particular, the fibers $L_x = \phi^{-1}(x)$ satisfy $\dim L_x \le n$. Note for $j > n-p$, Theorem \ref{theorem du Bois properties}\ref{theorem right triangle resolution exceptional divisor} and Steenbrink vanishing implies $$R^j\phi_*\underline \Omega_Z^{n+p} \xrightarrow{\sim} R^j\phi_*\underline \Omega_L^{n+p},$$ and it is enough to check the vanishing on stalks. Consider the hypercohomology spectral sequence $$E_1^{k,l} = R^k\phi_*\mathscr H^l\underline \Omega_L^{n+p} \Rightarrow R^{k + l}\phi_*\underline \Omega_L^{n+p}.$$  By cohomology and base change, we have $$\widehat{R^k\phi_*\mathscr H^l\underline\Omega_L^{n+p}}_x = H^k(L_x, \widehat{\mathscr H^l\underline \Omega_L^{n+p}}_x).$$ Theorem \ref{theorem du Bois properties}\ref{theorem du bois open restriction} implies that $\widehat{\mathscr H^l\underline \Omega_L^{n+p}}_x$ is isomorphic to $\mathscr H^l\underline \Omega_{L_x}^{n+p}$, which vanishes if $p > 0$.  Therefore, $R^j\phi_*\underline \Omega_Z^{n+p} = 0$ for $j > n-p$.  
\end{proof}

If $\phi:Z \to X$ is a symplectic resolution of singularities, then $\dim L \le 2n-2$.  It follows from Theorem \ref{theorem du Bois properties}\ref{theorem right triangle log zero du Bois} that $$\underline \Omega_X^{2n-1} \xrightarrow{\sim} \mathbf R\phi_*\Omega_Z^{2n-1},$$ and so $\mathbb D_X(\underline \Omega_X^{2n-1}) \cong \mathbf R\phi_*\Omega_Z^1$ by duality (this gives a simple proof of Theorem \ref{theorem main} in this case).  We therefore obtain the following:

\begin{corollary} \label{corollary vanishing symplectic resolution}
Suppose $X$ is a symplectic variety.  If $\mathscr H^1\mathbb D_X(\underline \Omega_X^{2n-1}) = 0$ (equivalently, if $X$ is 1-rational), then $X$ cannot admit a proper symplectic resolution of singularities.
\end{corollary}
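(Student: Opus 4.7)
The plan is to argue by contradiction, exploiting the identification $\mathbb{D}_X(\underline{\Omega}_X^{2n-1}) \cong \mathbf{R}\phi_*\Omega_Z^1$ established in the paragraph preceding the corollary. Suppose $\phi\colon Z \to X$ is a proper symplectic resolution. The hypothesis $\mathscr{H}^1\mathbb{D}_X(\underline{\Omega}_X^{2n-1}) = 0$ then translates into the sheaf vanishing $R^1\phi_*\Omega_Z^1 = 0$ on $X$. Since $\phi$ is not an isomorphism, there exists a singular point $x \in X$ with positive-dimensional projective fiber $L_x = \phi^{-1}(x)$; projectivity of fibers is part of Kaledin's structure theorem for symplectic resolutions. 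Shrinking $X$ to a Stein neighborhood of $x$, Cartan's Theorem B makes the Leray spectral sequence for $\phi$ collapse, yielding $H^1(Z,\Omega_Z^1) \cong H^0(X, R^1\phi_*\Omega_Z^1) = 0$.

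The remaining step is to contradict this vanishing by producing an explicit nonzero class in $H^1(Z,\Omega_Z^1)$. Kaledin's theorem that fibers of symplectic resolutions carry pure Hodge--Tate cohomology gives $H^2(L_x,\mathbb{C}) = H^{1,1}(L_x) \neq 0$ and $H^2(L_x,\mathscr{O}_{L_x}) = H^{0,2}(L_x) = 0$. The latter vanishing removes the obstruction to lifting line bundles from $L_x$ across its infinitesimal neighborhoods, so any ample line bundle $M$ on $L_x$ extends to a formal line bundle on $\widehat{Z}_{L_x}$. Grothendieck's existence theorem (together with Artin approximation) then algebraizes $M$ to an actual line bundle $\widetilde{M}$ on an analytic neighborhood of $L_x$ in $Z$. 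Its first Chern class $c_1(\widetilde{M}) \in H^1(Z,\Omega_Z^1)$ restricts to $c_1(M) \neq 0$ on $L_x$, so it is nonzero, contradicting $H^1(Z,\Omega_Z^1) = 0$.

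The main obstacle is the algebraization step: once $H^2(L_x,\mathscr{O}_{L_x}) = 0$ is available from Kaledin's Hodge--Tate property, the inductive lifting of $M$ to the formal neighborhood is a standard obstruction-theoretic argument, and its algebraization for a proper fiber over a Stein base follows from Grothendieck's existence theorem. An alternative, more intrinsic, route is to compute $(R^1\phi_*\Omega_Z^1)^{\wedge}_x$ via the theorem on formal functions and detect a nonzero contribution from $H^1(L_x,\Omega_Z^1|_{L_x})$ directly; this approach uses the conormal sequence together with the Lagrangian property of $L_x$ (so that $\sigma_Z|_{L_x} = 0$) to pair a Kähler class on $L_x$ with the conormal bundle, but it is Hodge-theoretically more delicate when $L_x$ is singular, which is why I would prefer the line-bundle argument above.
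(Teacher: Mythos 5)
Your reduction is exactly the paper's: the text offers no argument beyond the identification $\mathbb D_X(\underline \Omega_X^{2n-1}) \cong \mathbf R\phi_*\Omega_Z^1$ established in the preceding paragraph, so the entire content of a written-out proof is the step you try to supply, namely that $R^1\phi_*\Omega_Z^1 \neq 0$ whenever $\phi$ has a positive-dimensional fibre $L_x$. Your framing of that step (shrink to a Stein neighborhood, collapse Leray to get $H^1(Z,\Omega_Z^1) \cong H^0(X, R^1\phi_*\Omega_Z^1)$, and detect a nonzero class by restricting a line bundle to $L_x$) is the right idea and consistent with the tools the paper has set up (semismallness and Hodge--Tate fibres from Kaledin).

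There is, however, a genuine gap in the lifting step. The obstruction to extending a line bundle from the $k$-th to the $(k{+}1)$-st infinitesimal neighborhood of $L_x$ lies in $H^2(L_x, \mathscr I^{k+1}/\mathscr I^{k+2})$, i.e., in $H^2$ of symmetric powers of the conormal sheaf, not in $H^2(L_x, \mathscr O_{L_x})$; the Hodge--Tate vanishing $H^{0,2}(L_x) = 0$ says nothing about these groups, so "the latter vanishing removes the obstruction" is not justified. (Grothendieck existence plus Artin approximation is also not the right algebraization tool over a Stein, non-algebraic base.) The repair is to bypass the formal neighborhood entirely and use rational singularities: since $R^2\phi_*\mathscr O_Z = 0$, for a small contractible Stein $V \ni x$ the exponential sequence on $Z_V = \phi^{-1}(V)$ shows $c_1\colon \mathrm{Pic}(Z_V) \to H^2(Z_V, \mathbb Z) \cong H^2(L_x, \mathbb Z)$ is surjective, so a class with nonzero image in $H^2(L_x,\mathbb C)$ is realized by an honest line bundle $\widetilde M$ on $Z_V$ with no lifting argument at all. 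One then concludes as you intend: if $H^1(Z_V, \Omega_{Z_V}^1) = 0$, the holomorphic Chern class of $\widetilde M$ lies in $F^2$, whose restriction to $L_x$ vanishes because $H^2(L_x,\mathbb C)$ is pure of type $(1,1)$ --- and this last formulation also handles singular $L_x$, where your appeal to a class ``$c_1(M) \in H^1(L_x,\Omega^1_{L_x})$'' is not directly available.
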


The converse is not true, since the same proof shows $\mathbf R\phi_*\mathbb D_Z(\underline \Omega_Z^{2n-1}) \cong \mathbb D_X(\underline \Omega_X^{2n-1})$ for any crepant morphism $\phi: Z \to X$.  This suggests the birational geometry of 1-rational symplectic varieties is quite tame.

\begin{example}
The preceding observation can be taken further if we restrict $\dim \Sigma$.  For instance, suppose that $X$ only has isolated singularities and $\phi: Z \to X$ is a symplectic resolution.  For every $0 < p \le n$, we have $$\underline \Omega_X^{n+p} \xrightarrow{\sim} \mathbf R\phi_*\Omega_Z^{n+p}, \quad \mathbb D_X(\underline \Omega_X^{n+p}) \xrightarrow{\sim} \mathbf R\phi_*\Omega_Z^{n-p}$$ (this gives a simple proof of Theorem \ref{theorem main} in this case).  It follows that the complexes $\mathbf R\phi_*\Omega_Z^k$ are independent of the choice of symplectic resolution, except possibly when $k = n$. For this case, note that the sheaves $R^j\phi_*\Omega_Z^n$ are independent of $\phi$, except possibly for $j = n$, by Theorem \ref{theorem du Bois properties}\ref{theorem right triangle log zero du Bois} and the vanishing $R^j\phi_*\underline \Omega_L^n = 0$ for $j \ne n$.  In fact, by Corollary \ref{corollary extra vanishing}, $$R^n\phi_*\Omega_Z^n \xrightarrow{\sim} R^n\phi_*\underline \Omega_L^n.$$  We see $H^n(L_x, \underline \Omega_L^n) = H^{2n}(L_x, \mathbb C)$, and $(R^n\phi_*\Omega_Z^n)_x$ depends only on $\dim H^n(L_x, \underline \Omega_L^n)$.  This suggests that $\mathbf R\phi_*\underline \Omega_Z^n$ is also independent of $\phi$ if two symplectic resolutions are deformation equivalent, which is conjectured to holds for any two symplectic resolutions of $X$.  In the projective case, this is Huybrechts' theorem \cite[Corollary 4.7]{huybrechts1999basic}. 
\end{example}

Finally, we study the $k$-Du Bois property for symplectic varieties admitting a symplectic resolution:

\begin{theorem} \label{theorem symplectic resolution k du bois}
If $X$ is a symplectic variety of dimension $2n$ with isolated singularities admitting a symplectic resolution $\phi: Z \to X$, then $X$ is $(n-1)$-Du Bois.  Moreover, if $X$ is projective, then $H^k(X, \mathbb Q)$ carries a pure Hodge structure for each $k$.
\end{theorem}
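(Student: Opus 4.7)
The plan is to establish the $(n-1)$-Du Bois property by applying the triangle of Theorem \ref{theorem du Bois properties}\ref{theorem right triangle resolution exceptional divisor} to the symplectic resolution $\phi: Z \to X$, exploiting Kaledin's results that $\phi$ is semismall with compact fibers of Hodge--Tate cohomology. For each $1 \le p \le n-1$, since $\Sigma$ is finite and $\underline \Omega_\Sigma^p = 0$, the triangle reduces to
$$\underline \Omega_X^p \to \mathbf R\phi_*\Omega_Z^p \to \mathbf R\phi_*\underline \Omega_E^p \xrightarrow{+1},$$
so the task becomes showing the long exact sequence forces $\mathscr H^j\underline \Omega_X^p = 0$ for all $j > 0$.

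First I would analyze $\mathbf R\phi_*\underline \Omega_E^p$. The Hodge--Tate property of each fiber $L_x$, together with the $E_1$-degeneration of the Du Bois spectral sequence on compact $L_x$ (Theorem \ref{theorem du Bois properties}(vi)), forces $\mathbb H^q(L_x, \underline \Omega_{L_x}^p) = 0$ unless $q = p$. Then Theorem \ref{theorem du Bois properties}\ref{theorem du bois open restriction} combined with proper base change for isolated strata shows $\mathbf R\phi_*\underline \Omega_E^p$ is a skyscraper at $\Sigma$ concentrated in cohomological degree $p$ with stalk $H^{2p}(L_x, \mathbb C)$. Next I would analyze $\mathbf R\phi_*\Omega_Z^p$ via Saito's decomposition theorem applied to the semismall morphism $\phi$,
$$\phi_*\mathbb Q_Z^H[2n] \cong \mathrm{IC}_X \oplus \bigoplus_{x \in \Sigma} \mathcal V_x,$$
where each $\mathcal V_x$ is a pure Hodge--Tate summand of $H^*(L_x)$ supported at $x$. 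Taking the appropriate graded piece of the Hodge filtration and applying the de Rham functor identifies $\mathbf R\phi_*\Omega_Z^p$ (for $Z$ smooth, up to shift) with a piece coming from $\mathrm{IC}_X$ plus a skyscraper contribution from the $\mathcal V_x$, the latter concentrated in cohomological degree $p$ by Hodge--Tate concentration in bidegree $(p,p)$. Feeding both statements into the long exact sequence yields $\mathscr H^j\underline \Omega_X^p = 0$ for $0 < j \ne p$, while the induced comparison $R^p\phi_*\Omega_Z^p \to R^p\phi_*\underline \Omega_E^p$ is a stalkwise isomorphism between two copies of $H^{2p}(L_x, \mathbb C)$, forcing $\mathscr H^p\underline \Omega_X^p = 0$ as well.

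For the purity assertion when $X$ is projective, the $(n-1)$-Du Bois property just obtained gives $\mathbb H^q(X, \underline \Omega_X^p) \cong H^q(X, \Omega_X^{[p]})$ for $p \le n-1$; Theorem \ref{theorem main} combined with Grothendieck duality controls the dual range $p \ge n+1$, and Corollary \ref{corollary extra vanishing} with $\underline \Omega_X^n \cong \pi_*\omega_{\widetilde X}$ handles the middle degree. The $E_1$-degenerate spectral sequence of Theorem \ref{theorem du Bois properties}(vi) then expresses $H^k(X, \mathbb C)$ as a sum of pure pieces of Hodge type $(p, k-p)$, exhibiting the pure Hodge structure on $H^k(X, \mathbb Q)$.

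The main obstacle is the precise identification of the Hodge filtration levels of the summands $\mathcal V_x$ from Saito's decomposition, so that one can conclude their contribution to $\mathbf R\phi_*\Omega_Z^p$ concentrates in degree $p$ and matches $\mathbf R\phi_*\underline \Omega_E^p$ exactly; this requires carefully tracking the Hodge module shift conventions as one passes between perverse and classical $t$-structures on $X$.
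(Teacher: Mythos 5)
Your setup is the right one --- the triangle $\underline\Omega_X^p \to \mathbf R\phi_*\Omega_Z^p \to \mathbf R\phi_*\underline\Omega_L^p \xrightarrow{+1}$ and the observation that $\mathbf R\phi_*\underline\Omega_L^p$ is a skyscraper concentrated in degree $p$ (Hodge--Tate fibers plus $E_1$-degeneration) are exactly the inputs the paper uses. But what this buys is only the \emph{injectivity} $\mathscr H^j\underline\Omega_X^p \hookrightarrow R^j\phi_*\Omega_Z^p$ for $j \ne p+1$ (the case $j = p+1$ being covered by Theorem \ref{theorem local and steenbrink vanishing intro}). Your assertion that the long exact sequence then ``yields $\mathscr H^j\underline\Omega_X^p = 0$ for $0 < j \ne p$'' does not follow, because $R^j\phi_*\Omega_Z^p$ is not zero in the range $0 < j \le p$ that actually matters: one has $\mathbf R\phi_*\Omega_Z^p \cong \mathbb D_X(\underline\Omega_X^{2n-p})$, and Corollary \ref{corollary vanishing symplectic resolution} says precisely that $R^1\phi_*\Omega_Z^1 \ne 0$ whenever a proper symplectic resolution exists. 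The appeal to Saito's decomposition theorem does not repair this: the $\mathrm{IC}_X$ summand contributes the cohomology sheaves of $\mathrm{gr}^F_{-p}\mathrm{DR}(\mathcal{IC}_X)$, which are not known to be concentrated in a single degree --- indeed, showing that $\mathscr H^j\mathbb D_X(\underline\Omega_X^{n+q}) \ne 0$ only for $j = n-q$ is a \emph{consequence} of the theorem you are proving, so using it here is circular. The missing idea is the paper's Remark \ref{remark zero map}: an induction on $p$ (via the exact sequences of $d$-closed logarithmic forms and iterated Steenbrink vanishing) showing that once $X$ is $(p-1)$-Du Bois, the composite $\mathscr H^j\underline\Omega_X^{p} \to \mathscr H^j\mathbb D_X(\underline\Omega_X^{2n-p}) \to \mathscr H^j\underline\Omega_X^{2n-p}$ is the \emph{zero} map for $j>0$, while the second arrow is injective by Step 1 of Theorem \ref{lemma local vanishing isolated singularities}. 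Zero composite plus injectivity of both arrows forces $\mathscr H^j\underline\Omega_X^p = 0$; your argument supplies the injectivity of the first arrow but nothing playing the role of the zero-map statement.

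The purity argument has a separate gap: the $E_1$-degeneration of $\mathbb H^q(X,\underline\Omega_X^p) \Rightarrow H^{p+q}(X,\mathbb C)$ computes the graded pieces of the \emph{Hodge} filtration on the mixed Hodge structure, so identifying these groups with $H^q(X,\Omega_X^{[p]})$ says nothing about the \emph{weight} filtration, which is what purity concerns. The paper instead uses the Mayer--Vietoris sequence $\cdots \to H^{k-1}(L,\mathbb Q) \to H^k(X,\mathbb Q) \to H^k(Z,\mathbb Q) \to H^k(L,\mathbb Q) \to \cdots$ together with the purity of $H^k(Z,\mathbb Q)$ and the pure Hodge--Tate structure on $H^*(L,\mathbb Q)$ to control the lower-weight part of $H^k(X,\mathbb Q)$; you should route the second claim through this topological sequence rather than through the Hodge--de Rham spectral sequence.
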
 

\begin{proof}
The second part is easy to see: there is a long exact sequence $$... \to H^k(X, \mathbb Q) \to H^k(Z, \mathbb Q) \to H^k(L, \mathbb Q) \to ...$$ for each $k$, noting that all the spaces involved are assumed projective.  The result follows since $H^k(L, \mathbb Q)$ is Hodge-Tate.

For the first part, we recall from Remark \ref{remark zero map} that it is enough to show that $\mathscr H^j\underline \Omega_X^{n-p} \hookrightarrow \mathbb D_X(\underline \Omega_X^{n+p}) \cong \mathbf R\phi_*\Omega_Z^{n-p}$ for $j > 0$ and $0 < p \le n$.  This follows again from the fact that $R^j\phi_*\underline \Omega_L^{n-p} = 0$ for $j \ne n-p$, since $\mathscr H^j\underline \Omega_X^{n-p} = 0$ for $j > n-p$ by Theorem \ref{theorem local and steenbrink vanishing intro}.
\end{proof}

We can also see that symplectic varieties admitting symplectic resolutions with a smooth singular locus is also $(n-1)$-Du Bois by Proposition \ref{proposition symstrat}.  In this case, each transversal slice $Y_x$ has isolated singularities, and a crepant resolution on $X$ descends to a crepant resolution on $Y_x$ by functorial pull-back \cite[Theorem 1.11]{kebekus2021extending}.

\section{Cohomology of Primitive Symplectic 4-Folds} As another application, we study the Hodge theory of $\mathbb Q$-factorial terminal primitive symplectic 4-folds. If $\pi:\widetilde X \to X$ is a log-resolution of singularities with exceptional divisor $E$ and smooth components $E_i$, we let $E_{(m)}$ be the disjoint union of all $m$-fold intersections of the $E_i$.

\subsection{Hodge theory of primitive symplectic varieties} \label{subsection primitive symplectic varieties}

\begin{definition}
A projective variety (or more generally, a compact K\"ahler variety) $X$ is a \textit{primitive symplectic variety} if $X$ is a symplectic variety with symplectic form $\sigma$ satisfying the following additional properties:

\begin{enumerate}
    \item $H^1(X, \mathscr O_X) = 0$; 

    \item $H^0(X, \Omega_X^{[2]}) = \langle \sigma \rangle$.
\end{enumerate}
\end{definition}

Primitive symplectic varieties are singular analogues of irreducible holomorphic symplectic manifolds; in fact, a primitive symplectic variety is smooth if and only if it is irreducible holomorphic symplectic \cite[Theorem 1]{schwald2022definition}.

 When $X$ is primitive symplectic, there is a natural inclusion $H^2(X, \mathbb Z) \hookrightarrow H^2(\widetilde X, \mathbb Z)$ for any resolution of singularities, and the Hodge filtration agrees with the degeneration of a spectral sequence  \begin{equation}\label{equation hodge to de rham X}
E_1^{p,q} = H^q(X, \pi_*\Omega_{\widetilde X}^p) \Rightarrow \mathbb H^{p+q}(X, \pi_*\Omega_{\widetilde X}^\bullet)\end{equation} for $p + q \le 2$ \cite[Lemma 2.2]{bakker2016global}.

Let $IH^k(X, \mathbb Q)$ be the $k^{th}$-intersection cohomology group.  If $X$ is is primitive symplectic, then $X$ carries a pure Hodge structure of weight $k$ by an inclusion $IH^k(X, \mathbb Q) \hookrightarrow H^k(\widetilde X, \mathbb Q)$ induced by the decomposition theorem.  If $X$ has isolated singularities, then $IH^k(X, \mathbb Q) \cong H^k(U, \mathbb Q)$ for $k < \dim X$.  In this case, the Hodge filtration is the one induced by the spectral sequence \begin{equation}\label{equation spectral sequence log regular locus}
    E_1^{p,q} = H^q(\widetilde X, \Omega_{\widetilde X}^p(\log E)) \Rightarrow H^{p+q}(\widetilde X, \Omega_{\widetilde X}^\bullet(\log E)) \cong H^{p+q}(U, \mathbb C),
\end{equation} where $\pi:\widetilde X \to X$ is a log-resolution of singularities.  By duality, this gives the Hodge filtration on $IH^k(X, \mathbb C) \cong H^k_c(U, \mathbb C)$ for $k > \dim X$.  Since $X$ has isolated singularities, we also have that the Hodge-to-de Rham spectral sequence 
\begin{equation}\label{equation hodge to de rham regular locus}
 E_1^{p,q} = H^q(U, \Omega_U^p) \Rightarrow H^{p+q}(U, \mathbb C) 
\end{equation} degenerates in the range $p + q < \dim X - 1$ by \cite[Thm. 2]{arapura1990local}\footnote{In fact, this was upgraded to the case that $X$ has smooth singular locus in \cite{tighe2022llv} when $X$ is symplectic}.  This implies the symplectic form induces isomorphisms \begin{equation}\label{equation intersection cohomology symplectic symmetry}
    L_\sigma^p: IH^{n-p,q}(X) \xrightarrow{\sim} IH^{n+p,q}(X),
\end{equation} for $0 \le p \le n$ \cite[Theorem 1.2]{tighe2022llv}

\subsection{Hodge theory of $\mathbb Q$-factorial terminal primitive symplectic 4-folds}

\begin{proposition} \label{proposition Q factorial H^2 IH^2}
Let $X$ be a $\mathbb Q$-factorial terminal primitive symplectic variety.  Then $$H^2(X, \mathbb Q) \xrightarrow{\sim} IH^2(X, \mathbb Q)$$ as (polarized) Hodge structures, and the inclusion $H^2(X, \mathbb Q) \hookrightarrow H^2(\widetilde X, \mathbb Q)$ has codimension $\dim H^0(E_{(1)})$.
\end{proposition}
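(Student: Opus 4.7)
The strategy is to identify both $H^2(X,\mathbb Q)$ and $IH^2(X,\mathbb Q)$ inside $H^2(\widetilde X,\mathbb Q)$ as the complement of the subspace spanned by the classes $[E_i]$ of the irreducible components of the exceptional divisor of a log resolution. Since $X$ is terminal symplectic of dimension $2n = 4$, we have $\mathrm{codim}_X(\Sigma) \ge 4$ by \cite[Corollary 1]{namikawa2001note}, so $\Sigma$ is a finite set of points. This immediately yields $IH^2(X,\mathbb Q) \cong H^2(U,\mathbb Q)$, reducing the claim to comparing $H^2(X)$, $H^2(\widetilde X)$, and $H^2(U)$ for a log resolution $\pi: \widetilde X \to X$ with SNC exceptional divisor $E = \bigcup E_i$.

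The first key step is to establish the short exact sequence of polarized Hodge structures
\begin{equation*}
0 \to H^0(E_{(1)},\mathbb Q)(-1) \xrightarrow{c} H^2(\widetilde X,\mathbb Q) \to IH^2(X,\mathbb Q) \to 0,
\end{equation*}
where $c$ sends $\mathbf 1_{E_i}$ to the fundamental class $[E_i]$. This would be obtained from the long exact sequence of the pair $(\widetilde X, U)$, using the Thom--Gysin identification $H^2_E(\widetilde X,\mathbb Q) \cong H^0(E_{(1)},\mathbb Q)(-1)$. The injectivity of $c$ is precisely the $\mathbb Q$-factoriality input: the splitting $N^1(\widetilde X)_\mathbb Q = \pi^* N^1(X)_\mathbb Q \oplus \bigoplus \mathbb Q[E_i]$ forces the classes $[E_i]$ to be linearly independent in $N^1(\widetilde X) \subseteq H^2(\widetilde X,\mathbb Q)$. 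Exactness at the remaining spots uses the primitive symplectic vanishing $H^1(X, \mathscr O_X) = 0$ propagated via Leray, together with Poincar\'e duality on $\widetilde X$ for the degree-$3$ edge term.

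Next, I would show that $\pi^*: H^2(X,\mathbb Q) \hookrightarrow H^2(\widetilde X,\mathbb Q)$ has image complementary to $\bigoplus \mathbb Q[E_i]$. The Leray spectral sequence for $\pi$ collapses to give a sequence $0 \to H^2(X,\mathbb Q) \to H^2(\widetilde X,\mathbb Q) \to H^0(X, R^2\pi_*\mathbb Q_{\widetilde X}) \to 0$ (using $R^1\pi_*\mathbb Q_{\widetilde X} = 0$ for terminal isolated singularities and the vanishing of the relevant $d_2$-differential from primitive symplectic), and the quotient is identified with $\bigoplus \mathbb Q[E_i]$ via $\mathbb Q$-factoriality. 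Combining with the previous step, the composition $H^2(X,\mathbb Q) \hookrightarrow H^2(\widetilde X,\mathbb Q) \twoheadrightarrow IH^2(X,\mathbb Q)$ is an isomorphism of $\mathbb Q$-vector spaces, and the codimension statement follows. Compatibility with the Hodge structures comes from the spectral sequence (\ref{equation hodge to de rham X}) for $H^2(X,\mathbb C)$ and (\ref{equation spectral sequence log regular locus}) for $H^2(U,\mathbb C) = IH^2(X,\mathbb C)$, both of which are restrictions of the Hodge filtration on $H^2(\widetilde X,\mathbb C)$ through compatible comparison maps, with the $[E_i]$ being pure $(1,1)$ classes; the polarizations are inherited from $\widetilde X$.

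The main technical obstacle is justifying the $\mathbb Q$-factoriality argument at the level of full $H^2$ rather than just $N^1$. A priori, the exceptional contributions $H^0(X, R^2\pi_*\mathbb Q_{\widetilde X}) = \bigoplus_x H^2(F_x, \mathbb Q)$ may contain non-algebraic Hodge classes not accounted for by the $[E_i]$. For terminal symplectic isolated singularities, however, one expects the fibers $F_x$ to have Hodge--Tate cohomology and simply connected topology (analogous to the behavior for symplectic resolutions in Section \ref{section symplectic resolutions}, and consistent with Theorem \ref{theorem symplectic resolution k du bois}), so that $H^2(F_x, \mathbb Q)$ is spanned by the classes of components of $E$ mapping to $x$. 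Verifying this rigorously is the delicate point and is where the terminality and $\mathbb Q$-factoriality hypotheses interact most essentially.
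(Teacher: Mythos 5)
The paper does not actually prove this proposition; it cites \cite[Proposition 2.17]{tighe2022llv} verbatim, so your proposal is being measured against the standard argument that reference contains. Your architecture is the right one: realize both $H^2(X,\mathbb Q)$ and $IH^2(X,\mathbb Q)$ inside $H^2(\widetilde X,\mathbb Q)$, show the defect of each is the span of the classes $[E_i]$, and read off the codimension $\dim H^0(E_{(1)})$. The injectivity of $c$ and the triviality of $\pi^*H^2(X)\cap\bigoplus\mathbb Q[E_i]$ both follow from the negativity lemma applied over $X$, and the Hodge-theoretic compatibilities you invoke are fine.

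However, the step you yourself flag as the ``main technical obstacle'' is a genuine gap, and the fix you propose would not work. You want the exceptional contribution $H^0(X,R^2\pi_*\mathbb Q_{\widetilde X})=\bigoplus_x H^2(F_x,\mathbb Q)$ to be accounted for by the $[E_i]$ because the fibers $F_x$ should be Hodge--Tate ``analogous to the behavior for symplectic resolutions.'' But $\pi$ here is a \emph{log resolution}, not a symplectic resolution: a $\mathbb Q$-factorial terminal symplectic variety typically admits no symplectic resolution at all (cf.\ Corollary \ref{corollary vanishing symplectic resolution}), $\pi$ is neither crepant nor semismall, and Kaledin's theorem on Hodge--Tate fibers does not apply. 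The correct way to close the gap avoids the fibers entirely: by the decomposition theorem write $H^2(\widetilde X,\mathbb Q)=IH^2(X,\mathbb Q)\oplus V$ with $V$ supported on $E$; since any class of $V$ restricts to zero on $U$ and a holomorphic $2$-form vanishing on a dense open set is zero, $V$ is pure of type $(1,1)$, hence spanned by divisor classes by Lefschetz $(1,1)$; $\mathbb Q$-factoriality then writes any such divisor as $\pi^*(\text{$\mathbb Q$-Cartier})$ plus an exceptional combination, and the negativity lemma pins $V$ down as exactly $\bigoplus_i\mathbb Q[E_i]$. A secondary issue: you silently assume $\dim X=4$ and $\Sigma$ finite, whereas the proposition is stated (and used in Proposition \ref{proposition deformation intersection cohomology}, where it is applied to a $\mathbb Q$-factorial terminalization of an arbitrary primitive symplectic variety) without these restrictions; for non-isolated $\Sigma$ of codimension $\ge 4$ the identification $IH^2(X,\mathbb Q)\cong H^2(U,\mathbb Q)$ requires the stratified argument via \cite[Lemma 1]{durfee1995intersection} rather than the long exact sequence of the pair $(\widetilde X,U)$ alone.
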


\begin{proof}
This is \cite[Proposition 2.17]{tighe2022llv}, where the codimension of the inclusion $H^2(X, \mathbb Q) \hookrightarrow H^2(\widetilde X, \mathbb Q)$ is calculated in the proof.
\end{proof}

Next, we need to understand the weight filtration $W^\bullet$ on the cohomology groups $H^k(U, \mathbb Q)$.  Recall that a (rational) Hodge structure $V$ is \textit{Hodge-Tate} if $V^{p,q} = 0$ unless possibly $p = q$.

\begin{theorem} \label{theorem weight filtration coh. regular locus}
Let $X$ be a primitive symplectic variety with isolated singularities.  Then $$W_{k + 2}H^k(U, \mathbb Q) = 0$$ for each $k > 2n$, and $\mathrm{gr}_{k+1}^WH^k(U, \mathbb Q)$ is Hodge-Tate.
\end{theorem}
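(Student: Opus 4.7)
The plan is to reduce the statement to a local claim about link cohomology at each isolated singularity, and then to bound the weight filtration on link cohomology using the Deligne weight spectral sequence applied to a log-resolution.

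First, since $\Sigma$ is finite, the exact triangle $\mathbb Q_X \to \mathbf R j_* \mathbb Q_U \to \mathcal K \xrightarrow{+1}$---with $j \colon U \hookrightarrow X$ the inclusion and $\mathcal K$ supported on $\Sigma$ with stalks $\mathscr H^k \mathcal K|_x = H^k(L_x, \mathbb Q)$ for $k \geq 1$, where $L_x$ denotes the link of $x$---yields the long exact sequence of mixed Hodge structures
\[
\cdots \to H^k(X, \mathbb Q) \to H^k(U, \mathbb Q) \to \bigoplus_{x \in \Sigma} H^k(L_x, \mathbb Q) \to H^{k+1}(X, \mathbb Q) \to \cdots.
\]
Since $X$ is projective, $H^k(X, \mathbb Q)$ has weights $\leq k$ in Deligne's convention, while $H^k(U, \mathbb Q)$ has weights $\geq k$ as $U$ is smooth; hence the image of the first arrow is pure of weight $k$. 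Both claims consequently reduce, via the induced injection $H^k(U, \mathbb Q)/\text{image} \hookrightarrow \bigoplus_x H^k(L_x, \mathbb Q)$, to showing that for $k > 2n$ and every $x \in \Sigma$, $H^k(L_x, \mathbb Q)$ has weights $\leq k+1$ with $\mathrm{gr}^W_{k+1}$ Hodge-Tate.

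For the local claim, fix a log-resolution $\pi \colon \widetilde X \to X$ with exceptional divisor $E$. Every component of $E$ lies over some point of $\Sigma$; writing $E_x = \pi^{-1}(x)$, one has $B_x^* \simeq \widetilde{B_x} \setminus E_x$, so the Deligne weight spectral sequence identifies
\[
\mathrm{gr}^W_{k+m} H^k(L_x, \mathbb Q) \subseteq H^{k-m}(E_{x,(m)}, \mathbb Q)(-m)
\]
as a subquotient, with $d_1$ given by Gysin maps. For $k > 2n$ and $m \geq 2$, I expect these subquotients to vanish: when $k + m > 4n$ the dimensional bound $\dim E_{x,(m)} \leq 2n - m$ kills the $E_1$ term outright, while in the intermediate range $2 \leq m \leq 4n - k$ one needs the Gysin differentials to be surjective in the relevant bidegrees. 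To produce this surjectivity I would deploy the local vanishing and extended Steenbrink vanishing of Theorem \ref{theorem local and steenbrink vanishing intro} applied locally near $x$, together with the symplectic duality of Theorem \ref{theorem main}. For the $m = 1$ contribution, the Hodge-Tate property of $\mathrm{gr}^W_{k+1} H^k(L_x, \mathbb Q) \subseteq H^{k-1}(E_{x,(1)}, \mathbb Q)(-1)$ reduces, via the local product decomposition of Proposition \ref{proposition symstrat}, to an analogous statement on transversal symplectic slices, and thence to Hodge-Tateness of high-degree cohomology of fibers of resolutions of symplectic singularities.

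The main obstacle is the surjectivity of the Gysin differentials in the intermediate range together with the Hodge-Tate property for the $m = 1$ graded pieces, since an arbitrary log-resolution need not respect the symplectic structure. An appealing alternative is to work directly in Saito's category of mixed Hodge modules: using the natural morphism $\mathrm{IC}_X \hookrightarrow \mathbf R j_* \mathbb Q^H_U[2n]$ and the purity of the intersection Hodge module (Theorem \ref{theorem intersection hodge module}), one could hope to read the desired weight bounds on $H^*(L_x)$ directly off the structure of the cone, bypassing explicit manipulation of the weight spectral sequence.
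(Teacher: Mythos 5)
Your reduction to the links $L_x$ via the triangle $\mathbb Q_X \to \mathbf Rj_*\mathbb Q_U \to \mathcal K$ is sound, but the argument stops exactly where the content of the theorem begins. The claim that $\mathrm{gr}^W_{k+m}H^k(L_x,\mathbb Q)$ vanishes for $m\ge 2$ in the intermediate range and that the $m=1$ piece is Hodge--Tate is precisely the nontrivial statement: for a general isolated singularity of dimension $2n$, the link cohomology in degrees $k\ge 2n$ is only constrained to have weights $\ge k+1$, and the upper bound you need (weight exactly $k+1$, and Hodge--Tate there) does not follow from dimension counts or from Gysin surjectivity ``in general position.'' You flag this yourself (``I expect these subquotients to vanish,'' ``the main obstacle is\dots''), and neither the appeal to the vanishing theorems of the paper nor the suggested detour through mixed Hodge modules is actually carried out. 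There is also a technical point you gloss over: the weight spectral sequence you invoke is the one for the complement of an SNC divisor in a smooth \emph{proper} variety, whereas $\widetilde{B_x}\setminus E_x$ sits inside the non-proper $\widetilde{B_x}$, whose cohomology $H^k(\widetilde{B_x})\cong H^k(E_x)$ is not pure; the identification of $\mathrm{gr}^W_{k+m}H^k(L_x)$ with a subquotient of $H^{k-m}(E_{x,(m)},\mathbb Q)(-m)$ therefore needs separate justification.

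For comparison, the paper's proof never passes to links or to a weight spectral sequence. It uses the factorization of $L_\sigma^p\colon H^k(U,\mathbb C)\to H^{k+2p}(U,\mathbb C)$ through $H_c^{k+2p}(U,\mathbb C)$: since $H^k(U)$ has weights $\ge k$ and $H_c^{k+2p}(U)$ has weights $\le k+2p$, the composite annihilates every weight-graded piece of $H^k(U)$ above weight $k$ (after the $(2p,0)$ shift). On the other hand, by the degeneration of Hodge-to-de Rham on $U$ in degrees below $2n-1$ (see \eqref{equation hodge to de rham regular locus} and \cite{arapura1990local}), the induced map $\mathrm{gr}_F^{n-p}H^k(U)\to \mathrm{gr}_F^{n+p}H^{k+2p}(U)$ is surjective for $k<2n-1$. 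Combining the two forces $\mathrm{gr}_F^{n+p}$ of the high-weight graded pieces of $H^{k+2p}(U)$ to vanish, and letting $p$ vary so that one maps across the middle degree yields both the weight bound and the Hodge--Tate property. This surjectivity on $F$-graded pieces is exactly the input missing from your plan; without it, or some substitute supplied by the symplectic form, the local weight bound on $H^k(L_x)$ remains unproven.
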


\begin{proof}
    For every $k$, we have a factorization \[ \begin{tikzcd}
    H^k(U, \mathbb C) \arrow{rr}{L_\sigma^p} \arrow{dr} && H^{k+2p}(U, \mathbb C) \\ 
    & H_c^{k+2p}(U, \mathbb C) \arrow{ur} &
    \end{tikzcd}\] of mixed Hodge structures of bidegree $(2p,0)$ (this is the content of \cite[\S 3]{tighe2022llv}).  On the Hodge filtration side, the top horizontal map induces a morphism $$L_\sigma^p: \mathrm{gr}_F^{n-p}H^k(U, \mathbb C) \to \mathrm{gr}_F^{n+p}H^{k+2p}(U, \mathbb C)$$ which by the degeneration of Hodge-to-de Rham on $U$ is an isomorphism if $k + 2p < 2n-1$ and is surjective if $k < 2n-1$.  On the weight filtration side, the morphism $$H_c^{k+2p}(U, \mathbb C) \to H^{k+2p}(U, \mathbb C)$$ lands in weight $k + 2p$ while the kernel of the morphism $H^k(U, \mathbb C) \to H_c^{k+2p}(U, \mathbb C)$ includes everything in weight $k+1$ since $H^k(U, \mathbb C)$ includes only higher weights and $H_c^{k+2p}(U, \mathbb C)$ is supported in lower weights.  We conclude that $\mathrm{gr}_F^{n-p}W_kH^k(U, \mathbb C) = 0$ if $k + 2p < 2n-1$ while $\mathrm{gr}_F^{2n+p}W_{k+2p + 1}H^{2k+p}(U, \mathbb C) = 0$ if $k < 2n-1$. This proves the vanishing of the weight $W_{k+2}$ piece of the mixed Hodge structure.  For the Hodge-Tate property, $W_k = 0$ for $k < 2n$, so assume that $k \ge 2n$.  Using the symplectic symmetry, for $2n \ge p > \frac{k + 1}{2}$ --- so that we are mapping across middle cohomology --- we get $\mathrm{gr}_F^pW_{k+1}H^k(U, \mathbb C) = 0$.
\end{proof}

\begin{corollary} \label{corollary weight filtration compactly supported cohomology}
If $X$ is a primitive symplectic variety with isolated singularities, then the weight filtration $W_l$ on $H^k_c(U, \mathbb Q)$ vanishes for $l \le k-1$ and $k < \dim X$, and $W_{k-1}H_c^k(U, \mathbb Q)$ is Hodge-Tate.
\end{corollary}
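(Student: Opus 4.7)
The plan is to apply Poincaré duality for the smooth open locus $U$ in order to transport the statement of Theorem \ref{theorem weight filtration coh. regular locus} to the compactly supported side. First I would record that, since $U$ is smooth of complex dimension $2n$, the Poincaré pairing
$$H^k_c(U,\mathbb Q)\otimes H^{4n-k}(U,\mathbb Q)\to H^{4n}_c(U,\mathbb Q)\cong \mathbb Q(-2n)$$
is a perfect pairing of mixed Hodge structures, giving an isomorphism of MHS
$$H^k_c(U,\mathbb Q)\cong H^{4n-k}(U,\mathbb Q)^\vee(-2n).$$
The hypothesis $k<2n$ ensures $j:=4n-k>2n$, which is exactly the range in which Theorem \ref{theorem weight filtration coh. regular locus} controls the weight filtration on $H^j(U,\mathbb Q)$.

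Next I would invoke the functoriality of the weight filtration under duality and Tate twist, namely
$$\mathrm{gr}^W_l\bigl(V^\vee(-m)\bigr)\cong \bigl(\mathrm{gr}^W_{2m-l}V\bigr)^\vee$$
for any mixed Hodge structure $V$. Taking $V=H^{4n-k}(U,\mathbb Q)$ and $m=2n$, this yields
$$\mathrm{gr}^W_l H^k_c(U,\mathbb Q)\cong \bigl(\mathrm{gr}^W_{4n-l}H^{4n-k}(U,\mathbb Q)\bigr)^\vee(-2n).$$

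Combining these, the weight data from Theorem \ref{theorem weight filtration coh. regular locus} for $H^j(U,\mathbb Q)$ translates directly: the right-hand side vanishes unless $4n-l\in\{j,j+1\}$, i.e.\ unless $l\in\{k-1,k\}$, giving the desired vanishing of the low-weight pieces of $H^k_c(U,\mathbb Q)$. Finally, the surviving weight $k-1$ piece corresponds via duality and a Tate twist to $\mathrm{gr}^W_{j+1}H^j(U,\mathbb Q)$, which is Hodge-Tate by the theorem; since both dualization and Tate twisting preserve the Hodge-Tate property, the remaining claim follows.

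The only real obstacle is the careful bookkeeping of weight shifts through Poincaré duality and the Tate twist, and in particular checking that the weight conventions used in the statement of Theorem \ref{theorem weight filtration coh. regular locus} indeed produce the expected symmetric conclusion on the compactly supported side; no new Hodge-theoretic input enters beyond that theorem and the formal behavior of mixed Hodge structures on smooth quasi-projective varieties.
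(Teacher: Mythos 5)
Your proof is correct and is essentially the paper's own argument: the paper's proof of this corollary is a one-line appeal to Poincar\'e duality, Theorem \ref{theorem weight filtration coh. regular locus}, and the fact that $W_{l}H^{j}(U)=0$ for $l<j$ on the smooth locus $U$. Your explicit bookkeeping of the duality and Tate-twist weight shifts simply fills in the details (and in doing so clarifies that the only possibly nonzero graded pieces of $H^k_c(U,\mathbb Q)$ sit in weights $k-1$ and $k$, which is the intended reading of the stated indices).
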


\begin{proof}
This follows from Poincar\'e duality and Theorem \ref{theorem weight filtration coh. regular locus}, noting that the weight filtration $W^lH^k = 0$ for $l < k$.   
\end{proof}

\begin{theorem} \label{theorem important vanishing Q fact term}
If $X$ is a $\mathbb Q$-factorial terminal primitive symplectic variety, then $$H^0(X, R^1\pi_*\Omega_{\widetilde X}^1(\log E)) = 0.$$
\end{theorem}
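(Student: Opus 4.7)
The plan is to combine the Leray spectral sequence with the Hodge theory already set up in this section, reducing the desired vanishing to the injectivity of the restriction $H^3(X,\mathbb{C}) \to H^3(U,\mathbb{C})$, and then closing with a weight argument coming from Corollary \ref{corollary weight filtration compactly supported cohomology} together with a standard purity result for the links of isolated rational singularities.

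\textbf{Step 1 (Leray reduction).} Since $X$ is $\mathbb{Q}$-factorial terminal of dimension $4$, $\operatorname{codim}_X(\Sigma) \ge 4$ forces $\Sigma$ to be a finite set of points. By Theorem \ref{theorem local vanishing general}, $R^j\pi_*\Omega_{\widetilde X}^1(\log E) = 0$ for $j > 1$, and $R^1\pi_*\Omega_{\widetilde X}^1(\log E)$ is a skyscraper on $\Sigma$. Proposition \ref{proposition reflexivity and symmetry of h^0's} combined with Theorem \ref{lemma steenbrink vanishing isolated singularities} (1-Du Bois) identifies $\pi_*\Omega_{\widetilde X}^1(\log E) \cong \Omega_X^{[1]}$, so the Leray spectral sequence yields the five-term exact sequence
\begin{equation*}
0 \to H^1(X,\Omega_X^{[1]}) \to H^1(\widetilde X,\Omega_{\widetilde X}^1(\log E)) \to H^0(X,R^1\pi_*\Omega_{\widetilde X}^1(\log E)) \to H^2(X,\Omega_X^{[1]}) \to H^2(\widetilde X,\Omega_{\widetilde X}^1(\log E)).
\end{equation*}
By Proposition \ref{proposition Q factorial H^2 IH^2}, $H^2(X,\mathbb{Q}) \xrightarrow{\sim} IH^2(X,\mathbb{Q}) \cong H^2(U,\mathbb{Q})$ as pure Hodge structures of weight 2, matching $(1,1)$-pieces; combined with injectivity of the Leray edge map this makes the first arrow an isomorphism. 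It therefore suffices to show that $H^2(X,\Omega_X^{[1]}) \to H^2(\widetilde X,\Omega_{\widetilde X}^1(\log E))$ is injective.

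\textbf{Step 2 (Du Bois identification).} Since $X$ is 1-Du Bois, $\Omega_X^{[1]} \cong \underline{\Omega}_X^1$, and the degeneration of the Du Bois spectral sequence (Theorem \ref{theorem du Bois properties}(vi)) gives $H^2(X,\Omega_X^{[1]}) = \mathbb{H}^2(X,\underline{\Omega}_X^1) = \mathrm{gr}_F^1 H^3(X,\mathbb{C})$. Similarly, the log Hodge-to-de Rham degeneration and the identification $IH^3(X) = H^3(U)$ (valid for isolated singularities in dim $\ge 4$) give $H^2(\widetilde X,\Omega_{\widetilde X}^1(\log E)) = \mathrm{gr}_F^1 IH^3(X,\mathbb{C})$. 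The map in question is then the $\mathrm{gr}_F^1$ of the restriction $H^3(X,\mathbb{C}) \to H^3(U,\mathbb{C})$, which is a morphism of mixed Hodge structures; because $\mathrm{gr}_F^1$ is exact on morphisms of pure Hodge structures of the same weight, the problem reduces to showing this restriction is injective.

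\textbf{Step 3 (Weight argument).} By Corollary \ref{corollary weight filtration compactly supported cohomology} with $k = 3 < 4$, $W_2 H^3_c(U,\mathbb{Q}) = 0$; using the isomorphism $H^3_c(U) \cong H^3(X)$ (from the long exact sequence of $(X,\Sigma)$ with $\Sigma$ finite) and the fact that $X$ is compact (forcing weights $\le 3$), we conclude that $H^3(X,\mathbb{Q})$ is pure of weight 3. Since $H^3(U,\mathbb{Q}) = IH^3(X,\mathbb{Q})$ is likewise pure of weight 3, the kernel of the restriction is the image of $H^3_\Sigma(X) = \bigoplus_{x \in \Sigma} \widetilde H^2(L_x)$ from the long exact sequence of the pair $(X,U)$. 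For an isolated rational singularity of complex dimension $n$, the mixed Hodge structure on $\widetilde H^k(L_x,\mathbb{Q})$ is pure of weight $k$ for $k < n-1$ (Steenbrink/Navarro Aznar); in our setting $k=2 < 3 = n-1$, hence $\widetilde H^2(L_x)$ is pure of weight 2. Strictness of morphisms of mixed Hodge structures then forces the image of $H^3_\Sigma(X)$ in $H^3(X)$ to vanish, establishing injectivity of the restriction and completing the proof.

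The main obstacle is Step 3: the purity of $H^3(X,\mathbb{Q})$ requires Corollary \ref{corollary weight filtration compactly supported cohomology} exactly at the boundary $k = \dim X - 1$, and the weight bound on $\widetilde H^2(L_x)$ needs the standard but nontrivial theory of MHS on links of isolated rational singularities; if one wishes to avoid citing the latter, an alternative would be to establish the link purity directly for symplectic germs using the local product decomposition of Proposition \ref{proposition symstrat}.
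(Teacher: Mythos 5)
Your argument is correct, but it takes a genuinely different route from the paper's. The paper's proof never leaves degree $2$: it compares the Leray spectral sequence for $\mathbf R\pi_*\Omega_{\widetilde X}^1(\log E)$ with the degeneration (\ref{equation spectral sequence log regular locus}) and concludes by a dimension count --- Proposition \ref{proposition Q factorial H^2 IH^2} gives $\dim H^1(X,\pi_*\Omega_{\widetilde X}^1(\log E)) = \dim H^1(\widetilde X, \Omega_{\widetilde X}^1(\log E))$, and the asserted degeneration of Leray in total degree $\le 1$ then forces the summand $H^0(X,R^1\pi_*\Omega_{\widetilde X}^1(\log E))$ to vanish. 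You instead run the five-term exact sequence one step further and reduce to the injectivity of $H^2(X,\Omega_X^{[1]})\to H^2(\widetilde X,\Omega_{\widetilde X}^1(\log E))$, which you identify with $\mathrm{gr}_F^1$ of $H^3(X,\mathbb C)\to IH^3(X,\mathbb C)$ and prove by showing $H^3(X,\mathbb Q)$ is pure of weight $3$ via Corollary \ref{corollary weight filtration compactly supported cohomology}. Two remarks. First, your route is in a sense more careful: exactness of the five-term sequence only gives $H^0(R^1\pi_*\Omega_{\widetilde X}^1(\log E)) = \ker\bigl(H^2(X,\pi_*\Omega_{\widetilde X}^1(\log E))\to H^2(\widetilde X,\Omega_{\widetilde X}^1(\log E))\bigr)$ once the first arrow is an isomorphism, so the injectivity you prove is exactly what is needed to justify the $d_2$-vanishing implicit in the paper's degeneration claim. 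Second, your argument front-loads the purity of $H^3(X,\mathbb Q)$ and the injectivity $H^3(X,\mathbb Q)\hookrightarrow IH^3(X,\mathbb Q)$, which the paper only establishes afterwards in Theorem \ref{theorem hodge structure h3} --- and establishes there by \emph{using} the present vanishing; since your weight input (Corollary \ref{corollary weight filtration compactly supported cohomology}) is independent of this theorem, there is no circularity, you have simply reversed the logical order. A small simplification: once $H^3(X,\mathbb Q)$ is pure of weight $3$ you are done, because the kernel of $H^3(X,\mathbb Q)\to IH^3(X,\mathbb Q)$ is $W_2H^3(X,\mathbb Q)$; the excursion through the links $L_x$ is unnecessary, and in any case only the semipurity bound (weights $\le 2$ on $H^2(L_x)$) is needed there, not the purity you cite.
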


\begin{proof}
First, the Hodge filtration on the (pure) Hodge structure $H^2(U, \mathbb C)$ agrees with the filtration induced by the spectral sequence $$E_1^{p,q} = \mathbb H^q(X, \mathbf R\pi_*\Omega_{\widetilde X}^p(\log E)) \Rightarrow H^{p+q}(U, \mathbb C)$$ which degenerates at $E_1$.  In particular, $\mathbb H^1(X, \mathbf R\pi_*\Omega_{\widetilde X}^1(\log E)) = H^{1,1}(U)$ (this is just another way of stating (\ref{equation spectral sequence log regular locus})).  There is also a spectral sequence $$E_2^{p,q} = H^q(X, R^p\pi_*\Omega_{\widetilde X}^1(\log E)) \Rightarrow \mathbb H^{p+q} (X, \mathbf R\pi_*\Omega_{\widetilde X}^1(\log E)).$$ But $E_2^{p,q} = 0$ for $q > 0$ since $X$ has isolated singularities, and $E_2^{p,q} = 0$ for $p > 1$ by Theorem \ref{theorem local and steenbrink vanishing intro}.  Therefore, the spectral sequence degenerates at $E_2$ when $p + q \le 1$; thus $$H^1(\widetilde X, \Omega_{\widetilde X}^1(\log E)) \cong H^1(X, \pi_*\Omega_{\widetilde X}^1(\log E)) \oplus H^0(X, R^1\pi_*\Omega_{\widetilde X}^1(\log E)).$$ But $\pi_*\Omega_{\widetilde X}^1 \xrightarrow{\sim} \pi_*\Omega_{\widetilde X}^1(\log E)$ by (\ref{equation holomorphic extension}), and $H^1(X, \pi_*\Omega_{\widetilde X}^1) \cong H^1(\widetilde X, \Omega_{\widetilde X}^1(\log E))$ by Proposition \ref{proposition Q factorial H^2 IH^2}. 
\end{proof}

As a corollary of Theorem \ref{theorem main}, we also get the following:

\begin{corollary} \label{corollary other important vanishing Q fact term degree n-1}
If $X$ is a $\mathbb Q$-factorial terminal primitive symplectic variety, then $$H^0(X, R^1\pi_*\Omega_{\widetilde X}^{2n-1}(\log E)(-E)) = 0.$$
\end{corollary}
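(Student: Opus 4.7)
The plan is to deduce this vanishing directly as the Grothendieck dual of Theorem \ref{theorem important vanishing Q fact term}, with Theorem \ref{theorem main} playing the role of the bridge between the two sides. Since $X$ is $\mathbb{Q}$-factorial and terminal of dimension $2n = 4$, one has $\mathrm{codim}_X(\Sigma) \ge 4$, so the singular locus $\Sigma$ is zero-dimensional and in particular $\underline \Omega_\Sigma^{2n-1} = 0$. This allows us to invoke the log-zero triangle of Theorem \ref{theorem du Bois properties}\ref{theorem right triangle log zero du Bois} to obtain a quasi-isomorphism
\[ \mathbf{R}\pi_*\Omega_{\widetilde X}^{2n-1}(\log E)(-E) \xrightarrow{\sim} \underline \Omega_X^{2n-1}, \]
and the identification (2.2) (valid since $2n-1 > 0 = \dim\Sigma$) to obtain
\[ \mathbb{D}_X(\underline \Omega_X^{2n-1}) \xrightarrow{\sim} \mathbf{R}\pi_*\Omega_{\widetilde X}^1(\log E). \]

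Next, I would apply Theorem \ref{theorem main}, which furnishes a quasi-isomorphism $L_\sigma^{n-1}: \mathbb{D}_X(\underline \Omega_X^{2n-1}) \xrightarrow{\sim} \underline \Omega_X^{2n-1}$. Composing the three quasi-isomorphisms produces a chain of quasi-isomorphisms between $\mathbf{R}\pi_*\Omega_{\widetilde X}^1(\log E)$ and $\mathbf{R}\pi_*\Omega_{\widetilde X}^{2n-1}(\log E)(-E)$. Taking $\mathscr H^1$ yields an isomorphism of coherent sheaves
\[ R^1\pi_*\Omega_{\widetilde X}^1(\log E) \cong R^1\pi_*\Omega_{\widetilde X}^{2n-1}(\log E)(-E). \]

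Applying $H^0(X, -)$ then reduces the desired statement to $H^0(X, R^1\pi_*\Omega_{\widetilde X}^1(\log E)) = 0$, which is precisely Theorem \ref{theorem important vanishing Q fact term}. There is no substantial obstacle: the argument is a formal manipulation, and the only points worth flagging are (i) that terminality in dimension $4$ forces isolated singularities, so both the identification (2.2) in degree $p = 2n-1$ and the collapse of the log-zero triangle are justified, and (ii) that Theorem \ref{theorem important vanishing Q fact term} is itself already proved under these hypotheses, so no additional assumption on $X$ is required.
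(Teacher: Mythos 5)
Your proposal is correct and is exactly the argument the paper intends: the corollary is stated with no written proof beyond ``as a corollary of Theorem \ref{theorem main},'' and the intended chain is precisely your composition of the log-zero triangle, the duality identification $\mathbb D_X(\underline\Omega_X^{2n-1})\cong \mathbf R\pi_*\Omega_{\widetilde X}^1(\log E)$, and the quasi-isomorphism $L_\sigma^{n-1}$, followed by Theorem \ref{theorem important vanishing Q fact term}. Your flagged points (i) and (ii) are also the right ones to check, since the section works with 4-folds where terminality forces isolated singularities.
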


\begin{remark} \label{remark q factorial deformations}
The vanishing of Theorem \ref{theorem important vanishing Q fact term} has an interpretation to deformation theory.  Supposing the vanishing holds, the Leray spectral sequence implies $H^2(X, \mathbb Q) \xrightarrow{\sim} H^2(U, \mathbb Q)$.  The $(1,1)$-parts of these Hodge structures parametrize the locally-trivial and flat deformations of $X$, respectively, if $X$ is terminal, and these are known to agree in the $\mathbb Q$-factorial setting \cite[Main Theorem]{namikawa2005deformations}.  The vanishing of Corollary \ref{corollary other important vanishing Q fact term degree n-1} also has an interpretation in deformation theory and has been studied in work of Friedman-Laza for nice classes of singular Calabi-Yau varieties, see for example \cite{friedman2024kliminal}.  It would be interesting to know if there is a similar relationship between $\mathbb Q$-factorial terminal Calabi-Yau varieties and their deformations.
\end{remark}

\begin{corollary} \label{corollary not so important vanishing}
If $X$ is a $\mathbb Q$-factorial terminal primitive symplectic 4-fold, then $$H^0(R^1\pi_*\Omega_{\widetilde X}^2(\log E)(-E)) = 0.$$
\end{corollary}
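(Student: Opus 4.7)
The plan is to chain preceding results to conclude that $X$ is $2$-Du Bois, which immediately gives the stated vanishing (and in fact a stronger, sheaf-level one).

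First, observe that $X$ has isolated singularities: terminal plus $\dim X=4$ forces $\mathrm{codim}_X(\Sigma)\geq 4$, so $\Sigma$ is a finite set of points. Next, I would establish that $X$ is $1$-rational in the sense of Definition \ref{definition higher du bois}. Theorem \ref{theorem local vanishing general} gives $R^j\pi_*\Omega^1_{\widetilde X}(\log E)=0$ for $j>1$, and Theorem \ref{theorem important vanishing Q fact term} gives $H^0(X,R^1\pi_*\Omega^1_{\widetilde X}(\log E))=0$; since this last sheaf is supported on the finite set $\Sigma$, vanishing of its global sections is equivalent to vanishing of the sheaf itself. Thus $R^j\pi_*\Omega^1_{\widetilde X}(\log E)=0$ for every $j>0$, and the canonical morphism
$$\Omega_X^{[1]}=\pi_*\Omega^1_{\widetilde X}(\log E)\longrightarrow \mathbf R\pi_*\Omega^1_{\widetilde X}(\log E)\cong \mathbb D_X(\underline{\Omega}^3_X)$$
is a quasi-isomorphism, so $X$ is $1$-rational.

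I would then invoke Theorem \ref{theorem k db k rational} with $k=1$, which promotes $1$-rationality to the $2$-Du Bois property: the natural map $\Omega_X^{[2]}\xrightarrow{\sim}\underline{\Omega}^2_X$ is a quasi-isomorphism. Because $X$ has isolated singularities, Theorem \ref{theorem du Bois properties}\ref{theorem right triangle log zero du Bois} identifies $\underline{\Omega}^2_X\cong_{\mathrm{qis}}\mathbf R\pi_*\Omega^2_{\widetilde X}(\log E)(-E)$. Comparing cohomology sheaves of the resulting quasi-isomorphism $\Omega_X^{[2]}\cong_{\mathrm{qis}}\mathbf R\pi_*\Omega^2_{\widetilde X}(\log E)(-E)$ forces
$$R^j\pi_*\Omega^2_{\widetilde X}(\log E)(-E)=0\qquad\text{for every }j>0.$$
Taking $j=1$ and then global sections gives Corollary \ref{corollary not so important vanishing} (and in fact the stronger sheaf-level statement).

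The argument is a direct chaining of Theorems \ref{theorem important vanishing Q fact term}, \ref{theorem local vanishing general}, and \ref{theorem k db k rational}, so there is no serious obstacle. The only bookkeeping is the identification of Du Bois pieces with $\mathbf R\pi_*$ of log-differentials for isolated singularities, both of which are immediate from Theorem \ref{theorem du Bois properties}.
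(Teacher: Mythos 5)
Your chain of results is logically coherent and genuinely different from the paper's argument. The paper proves the corollary by showing (locally, via the spectral sequence of the acyclic complex $\Omega_{\widetilde X}^\bullet(\log E)(-E)$) that the de Rham differential $d\colon R^1\pi_*\Omega_{\widetilde X}^2(\log E)(-E) \to R^1\pi_*\Omega_{\widetilde X}^3(\log E)(-E)$ is injective, and then quoting Corollary \ref{corollary other important vanishing Q fact term degree n-1} for the vanishing of $H^0$ of the target. You instead promote Theorem \ref{theorem important vanishing Q fact term} to the sheaf-level statement $R^1\pi_*\Omega_{\widetilde X}^1(\log E)=0$, conclude $1$-rationality, and feed that into Theorem \ref{theorem k db k rational}. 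Each individual link checks out against the paper's statements: the sheaf is coherent and supported on the finite set $\Sigma$, so vanishing of global sections on the compact $X$ does force the sheaf to vanish; $1$-rationality in the sense of Definition \ref{definition higher du bois} is exactly the vanishing of $R^j\pi_*\Omega_{\widetilde X}^1(\log E)\cong \mathscr H^j\mathbb D_X(\underline\Omega_X^3)$ for $j>0$; and the $2$-Du Bois property for isolated singularities is exactly $R^j\pi_*\Omega_{\widetilde X}^2(\log E)(-E)=0$ for $j>0$. Your route buys a strictly stronger, sheaf-level conclusion, whereas the paper's route stays entirely at the level of global sections.

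That said, you should not describe this as presenting ``no serious obstacle,'' because your argument proves something the author explicitly declines to claim. In the remark immediately following Corollary \ref{corollary not so important vanishing}, the paper states it cannot establish the sheaf-level vanishings, precisely because it cannot rule out that the germ of $X$ at a singular point admits a symplectic resolution, in which case Corollary \ref{corollary vanishing symplectic resolution} would force $R^1\pi_*\Omega_{\widetilde X}^1(\log E)\neq 0$. Your skyscraper upgrade of Theorem \ref{theorem important vanishing Q fact term} is formally hard to argue with, but it therefore implies a nontrivial geometric statement (no local symplectic resolutions at the singular points of a $\mathbb Q$-factorial terminal primitive symplectic $4$-fold) that the paper treats as open. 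This means either you have genuinely strengthened the paper's conclusions, or the weight your argument places on Theorem \ref{theorem important vanishing Q fact term} (whose proof rests on global Hodge-theoretic degenerations and Proposition \ref{proposition Q factorial H^2 IH^2}) is exposing a fragility there. Before relying on this route, you should verify Theorem \ref{theorem important vanishing Q fact term} independently; the paper's own proof of the corollary deliberately avoids this upgrade and only uses the weaker $H^0$-statements, which is why it survives even if the sheaf-level vanishing fails.
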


\begin{proof}
We claim that the differentiation morphism $$d: R^1\pi_*\Omega_{\widetilde X}^2(\log E)(-E) \to R^1\pi_*\Omega_{\widetilde X}^3(\log E)(-E)$$ is an isomorphism for any terminal 4-fold, not necessarily $\mathbb Q$-factorial or primitive.  Indeed, this problem is local, and we may assume that $X$ is the germ of Stein open neighborhood.  In this case, we again consider the spectral sequence $$E_1^{p,q} = H^q(\widetilde X, \Omega_{\widetilde X}^p(\log E)(-E) \Rightarrow \mathbb H^q(\widetilde X, \Omega_{\widetilde X}^p(\log E)(-E))$$ of Corollary \ref{corollary extra vanishing}, which abuts to 0. But by Theorem \ref{theorem local and steenbrink vanishing intro}, the only terms that survive are $E_1^{q,0}$ for $0 \le q \le 4$, $E^{2,1}_1$ and $E_1^{3,1}$.  Thus, $$E_2^{3,1} \cong E_1^{3,1}/E_1^{2,1},$$ and $E_2^{p,q} = E_\infty^{p,q} = 0$ for any $p,q$.  Now the claim follows from Corollary \ref{corollary other important vanishing Q fact term degree n-1}. 
\end{proof}

\begin{remark}
Theorem \ref{theorem local and steenbrink vanishing intro}, Theorem \ref{theorem important vanishing Q fact term}, and Corollaries \ref{corollary other important vanishing Q fact term degree n-1}, \ref{corollary not so important vanishing} imply that most of the higher cohomology sheaves $R^q\pi_*\Omega_{\widetilde X}^p(\log E)(-E)$ vanish for $\mathbb Q$-factorial terminal 4-folds, and the ones that don't at least have no global sections.  We are not sure if these extra vanishing should descend to the level of sheaves, as we cannot rule out that a $\mathbb Q$-factorial terminal 4-fold does not admit a symplectic resolution upon passing to formal completion at a singular point, and Corollary \ref{corollary vanishing symplectic resolution} would imply the non-vanishing.  This happens, for example, for the singular symplectic moduli space of Kaledin-Lehn-Sorger \cite[Remark 6.3]{kaledin2005singular}, which are $\mathbb Q$-factorial terminal. The singular loci of these examples are not smooth, and it may be the case this does not occur for isolated singularities. 
\end{remark}

\begin{theorem} \label{theorem hodge structure h3}
Let $X$ be a $\mathbb Q$-factorial terminal primitive symplectic variety.  Then $H^3(X, \mathbb Q)$ carries a pure Hodge structure, and $$H^3(X, \mathbb Q) \xrightarrow{\sim} IH^3(X, \mathbb Q) \xrightarrow{\sim} H^3(\widetilde X, \mathbb Q),$$ where $\pi:\widetilde X \to X$ is any log-resolution of singularities.
\end{theorem}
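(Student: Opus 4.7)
The plan is to establish the two isomorphisms $H^3(X,\mathbb Q) \cong IH^3(X, \mathbb Q) \cong H^3(\widetilde X, \mathbb Q)$, with purity of $H^3(X,\mathbb Q)$ a consequence of the first. Since $X$ is terminal, $\mathrm{codim}_X \Sigma \ge 4$ forces $\Sigma$ to be a finite set of points in the $4$-fold $X$. Because $3 < \dim X = 4$, we have $IH^3(X,\mathbb Q) \cong H^3(U, \mathbb Q)$, pure of weight $3$ with Hodge filtration coming from Deligne's log de Rham spectral sequence on $(\widetilde X, E)$ (degenerating at $E_1$).

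For the first isomorphism, I would compare this spectral sequence termwise with the Du Bois spectral sequence $\mathbb H^q(X, \underline \Omega_X^p) \Rightarrow H^{p+q}(X, \mathbb C)$ (also degenerating at $E_1$) on $\mathrm{gr}_F^p H^3$ for $p + q = 3$. Since $\Sigma$ is $0$-dimensional, $\underline\Omega_X^p \cong \mathbf R\pi_*\Omega_{\widetilde X}^p(\log E)(-E)$ for $p > 0$ by Theorem \ref{theorem du Bois properties}\ref{theorem right triangle log zero du Bois}, while $\underline \Omega_X^0 = \mathscr O_X$ by rational singularities. For $p = 0$ both sides yield $H^3(\widetilde X, \mathscr O_{\widetilde X})$. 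For $p = 3$ both reduce to $H^0(X, \Omega^{[3]}_X)$, which vanishes: Proposition \ref{proposition reflexivity and symmetry of h^0's} gives $\Omega^{[3]}_X \cong \Omega^{[1]}_X$, and then $H^0(\widetilde X, \Omega^1_{\widetilde X}) = 0$ follows by Hodge symmetry from $H^1(X, \mathscr O_X) = 0$ (primitivity). For $p = 1, 2$, the vanishings of Theorem \ref{theorem important vanishing Q fact term}, Theorem \ref{theorem local and steenbrink vanishing intro}, Corollary \ref{corollary not so important vanishing}, and Corollary \ref{corollary extra vanishing} (with Steenbrink vanishing) give $R^j\pi_*\Omega_{\widetilde X}^p(\log E)(-E) = R^j\pi_*\Omega_{\widetilde X}^p(\log E) = 0$ for $j > 0$, so the Leray spectral sequence collapses both graded pieces to $H^{3-p}(X, \Omega^{[p]}_X)$, noting that the inclusion $\pi_*\Omega^p(\log E)(-E) \hookrightarrow \pi_*\Omega^p(\log E) = \Omega^{[p]}_X$ has skyscraper cokernel contributing only to $H^0$, invisible for $3 - p \ge 1$. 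This yields $H^3(X,\mathbb C) \cong H^3(U, \mathbb C) = IH^3(X, \mathbb C)$, the first isomorphism and purity.

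For the second isomorphism I would apply the Mayer--Vietoris sequence,
$$H^2(E, \mathbb Q) \to H^3(X, \mathbb Q) \to H^3(\widetilde X, \mathbb Q) \to H^3(E, \mathbb Q) \to H^4(X, \mathbb Q),$$
valid since $\Sigma$ is finite. Injectivity of $H^3(X) \to H^3(\widetilde X)$ is immediate from weights: $H^2(E, \mathbb Q)$ has weight $\le 2$, hence its image in the pure weight-$3$ structure $H^3(X, \mathbb Q)$ vanishes. For surjectivity I would compare Hodge numbers of the pure weight-$3$ structures $H^3(X) = H^3(U)$ and $H^3(\widetilde X)$: both have $h^{0,3} = h^{3,0} = 0$ by the same primitivity argument as above, so each has dimension $2h^{2,1}$; here $h^{2,1}(U) = h^1(\widetilde X, \Omega^2(\log E)) = h^1(X, \Omega^{[2]}_X)$ by the previous Leray analysis, while $h^{2,1}(\widetilde X) = h^1(\widetilde X, \Omega^2)$. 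Equality of these dimensions would follow from the residue sequence $0 \to \Omega_{\widetilde X}^2 \to \Omega_{\widetilde X}^2(\log E) \to Q \to 0$ combined with Corollary \ref{corollary not so important vanishing} and the Grothendieck duality $\mathbb D_X(\mathbf R\pi_*\Omega^2(\log E)(-E)) \cong \mathbf R\pi_*\Omega^2(\log E)$ from (\ref{equation duality isomorphisms}).

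The hard part will be precisely this Hodge-number equality $h^1(\widetilde X, \Omega^2(\log E)) = h^1(\widetilde X, \Omega^2)$. The analogue for $h^{1,1}$ underlying Proposition \ref{proposition Q factorial H^2 IH^2} uses $\mathbb Q$-factoriality essentially via Theorem \ref{theorem important vanishing Q fact term}. For $h^{1,2}$, the Leray spectral sequence for $\pi_*\Omega^2$ picks up extra contributions from $\bigoplus_i H^1(E_i, \mathscr O_{E_i})$ arising from the residue sequence, and these must be shown to be killed by a connecting $d_3$-differential; the cancellation should follow from Corollary \ref{corollary not so important vanishing} and its duality consequences together with the $\mathbb Q$-factoriality hypothesis, parallel to how the $H^2$ case uses Theorem \ref{theorem important vanishing Q fact term}, but verifying the injectivity of the differential is the delicate technical step.
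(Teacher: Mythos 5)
Your route diverges from the paper's and has two genuine gaps. First, in your comparison of the graded pieces of $F$ on $H^3(X,\mathbb C)$ and $H^3(U,\mathbb C)$, the piece $(p,q)=(2,1)$ requires $H^1(\widetilde X,\Omega^2_{\widetilde X}(\log E))$ to collapse to $H^1(X,\Omega_X^{[2]})$, i.e.\ $H^0(X,R^1\pi_*\Omega^2_{\widetilde X}(\log E))=0$. None of the results you cite give this: Corollary \ref{corollary not so important vanishing} and Corollary \ref{corollary extra vanishing} concern $\Omega^2_{\widetilde X}(\log E)(-E)$, not $\Omega^2_{\widetilde X}(\log E)$, and by (\ref{equation duality isomorphisms}) the complex $\mathbf R\pi_*\Omega^2_{\widetilde X}(\log E)\cong\mathbb D_X(\underline\Omega_X^2)$ need not be a sheaf even when $\underline\Omega_X^2$ is --- this is precisely the failure of higher rationality discussed after Theorem \ref{theorem local vanishing general} and in Corollary \ref{corollary vanishing symplectic resolution}. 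The paper sidesteps this by proving only \emph{injectivity} of $H^q(X,\pi_*\Omega^p_{\widetilde X}(\log E))\to H^q(\widetilde X,\Omega^p_{\widetilde X}(\log E))$ for $p+q=3$: for $q=1$ this is a free Leray edge map, and for $q=2$ it is exactly Theorem \ref{theorem important vanishing Q fact term}; injectivity on graded pieces is all that is needed to kill $W_2H^3(X,\mathbb Q)$. Relatedly, your claim that the skyscraper cokernel of $\pi_*\Omega^p_{\widetilde X}(\log E)(-E)\hookrightarrow\Omega_X^{[p]}$ is ``invisible for $3-p\ge 1$'' is not correct: it feeds into $H^1$ of the subsheaf through the connecting map $H^0(T)\to H^1(X,\pi_*\Omega^p_{\widetilde X}(\log E)(-E))$.

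Second, and more seriously, the isomorphism $IH^3(X,\mathbb Q)\cong H^3(\widetilde X,\mathbb Q)$ is not actually established: you reduce it to the equality $h^1(\widetilde X,\Omega^2_{\widetilde X}(\log E))=h^1(\widetilde X,\Omega^2_{\widetilde X})$ and explicitly defer the injectivity of the relevant $d_3$ differential as ``the delicate technical step.'' The paper obtains this isomorphism with no Hodge-number count at all: rational singularities give $R^1\pi_*\mathscr O_{\widetilde X}=0$, hence $R^1\pi_*\mathbb Z=0$ by the exponential sequence, and the topological Leray spectral sequence for $\pi$ (which has only the $p=0$ and $q=0$ lines because $\Sigma$ is finite) produces an exact sequence ending in $H^0(X,R^2\pi_*\mathbb Z)\to H^3(X,\mathbb Z)\to H^3(\widetilde X,\mathbb Z)\to 0$; combined with the decomposition-theorem inclusion $IH^3(X,\mathbb Q)\hookrightarrow H^3(\widetilde X,\mathbb Q)$ and the weight comparison, this yields $\mathrm{gr}^W_3H^3(X,\mathbb Q)\cong IH^3(X,\mathbb Q)\cong H^3(\widetilde X,\mathbb Q)$ directly. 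You should replace the Mayer--Vietoris/Hodge-number step with this argument and reorganize so that purity (vanishing of $W_2$) is the only analytic input.
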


\begin{proof}
Since $X$ has rational singularities, $R^1\pi_*\mathscr O_{\widetilde X} = 0$.  Therefore, $R^1\pi_*\mathbb Z = 0$ by the exponential sheaf sequence, and the Leray spectral sequence $$E_2^{p,q} = H^p(X, R^q\pi_*\mathbb Z) \Rightarrow H^{p+q}(\widetilde X, \mathbb Z)$$ induces a long-exact sequence \begin{equation}\label{equation long exact sequence H^3}
0 \to H^2(X, \mathbb Z) \to H^2(\widetilde X, \mathbb Z) \to H^0(X, R^2\pi_*\mathbb Z) \to H^3(X, \mathbb Z) \to H^3(\widetilde X, \mathbb Z) \to 0.
\end{equation} Now, the natural map $H^3(X, \mathbb Q) \to IH^3(X, \mathbb Q)$ has kernel equal to $W_2H^3(X, \mathbb Q)$.  In particular, (\ref{equation long exact sequence H^3}) implies at the very least $$\mathrm{gr}_3^WH^3(X, \mathbb Q) \xrightarrow{\sim} IH^3(X, \mathbb Q) \xrightarrow{\sim} H^3(\widetilde X, \mathbb Q).$$ However, we will show that $H^3(X, \mathbb C) \hookrightarrow IH^3(X, \mathbb C)$ using the Hodge filtration, which is enough.  Since $X$ has isolated singularities, we have $IH^3(X, \mathbb C) \xrightarrow{\sim} H^3(U, \mathbb C)$, and the Hodge filtration is the one induced by (\ref{equation spectral sequence log regular locus}).  By duality, the Hodge filtration on $H_c^3(U, \mathbb C) \cong H^3(X, \mathbb C)$ is given by the spectral sequence $$E_1^{p,q} = H^q(\widetilde X, \Omega_{\widetilde X}^p(\log E)(-E)) \Rightarrow H^{p+q}(X, \mathbb C).$$ 
 By Proposition \ref{proposition reflexivity and symmetry of h^0's} and (\ref{equation natural morphism from du bois to dual}), we have morphisms $$H^q(X, \pi_*\Omega_{\widetilde X}^p(\log E)) \to H^q(\widetilde X, \Omega_{\widetilde X}^p(\log E)(-E)) \to H^q(\widetilde X, \Omega_{\widetilde X}^p(\log E)).$$ 
We claim $H^q(X, \pi_*\Omega_{\widetilde X}^p(\log E)) \to H^q(X, \Omega_{\widetilde X}^p(\log E))$ is injective for $p + q =3$.  It is obvious if $q = 0$, $q = 3$ holds since $X$ has rational singularities, and $q = 1$ holds by Leray.  To get $q = 2$, we use the fact that $H^0(X, R^1\pi_*\Omega_{\widetilde X}^1(\log E)) = 0$ by Theorem \ref{theorem important vanishing Q fact term}.  This proves that $H^3(X, \mathbb C) \hookrightarrow H^3(U, \mathbb C)$, and $H^3(X, \mathbb Q)$ carries a pure Hodge structure.
\end{proof}

Next, we need to understand the Hodge theory of log-resolutions of singularities.  The following is a slight generalization of a result of Kaledin, which states that the fibers of \textit{symplectic} resolutions are Hodge-Tate:

\begin{theorem}
Let $X$ be a $\mathbb Q$-factorial terminal primitive symplectic 4-fold, and let $E$ be the exceptional divisor of some log-resolution $\pi:\widetilde X \to X$.  Then $H^k(E, \mathbb Q)$ is pure of Hodge-Tate type for each $k$.  Moreover, $H^4(X, \mathbb Q)$ carries a pure Hodge structure.
\end{theorem}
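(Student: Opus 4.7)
The plan is to adapt the strategy of Theorem \ref{theorem hodge structure h3} to degree four.  First I will prove $H^4(X, \mathbb Q)$ is pure by showing that the restriction $H^4(X, \mathbb C) \hookrightarrow H^4(U, \mathbb C)$ is injective; this yields purity because $H^4(X)$ has weights $\le 4$ (as $X$ is compact) while $H^4(U)$ has weights $\ge 4$ (as $U$ is smooth).  From there I will invoke the BBD decomposition theorem together with the long exact sequence of the pair $(\widetilde X, E)$ to promote this purity to the Hodge--Tate property for every $H^k(E, \mathbb Q)$.

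For the injectivity step, by $E_1$-degeneration of the respective log Hodge-to-de Rham spectral sequences on $\widetilde X$ and by strictness of morphisms of mixed Hodge structures with respect to the Hodge filtration, it suffices to check that
$$H^{4-p}(\widetilde X, \Omega^p_{\widetilde X}(\log E)(-E)) \longrightarrow H^{4-p}(\widetilde X, \Omega^p_{\widetilde X}(\log E))$$
is injective for each $p \in \{0, 1, 2, 3, 4\}$.  I will feed this into Leray's spectral sequence and collapse it using the vanishings of Theorem \ref{theorem local and steenbrink vanishing intro}, Theorem \ref{theorem important vanishing Q fact term}, and Corollaries \ref{corollary other important vanishing Q fact term degree n-1} and \ref{corollary not so important vanishing}: both sides reduce to cohomology of $\pi_*$-sheaves on $X$, and the cokernel $\pi_*\Omega^p_{\widetilde X}(\log E)/\pi_*\Omega^p_{\widetilde X}(\log E)(-E)$ is a skyscraper on $\Sigma$ with no higher cohomology, forcing the comparison map to be injective.

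With purity of $H^4(X)$ in hand, the BBD decomposition theorem applied to $\pi$ produces a splitting $H^k(\widetilde X, \mathbb Q) \cong IH^k(X, \mathbb Q) \oplus C^k$ as pure Hodge structures of weight $k$, with $C^k$ supported on $\Sigma$ and Verdier self-dual $C^k \cong (C^{8-k})^*(-4)$.  Theorem \ref{theorem hodge structure h3}, Proposition \ref{proposition Q factorial H^2 IH^2}, the purity of $H^4(X)$, and Poincar\'e duality on intersection cohomology jointly force $C^k = 0$ for $k \in \{0, 1, 3, 5, 7, 8\}$; $C^2$ is spanned by classes of exceptional divisors (type $(1,1)$) and $C^6$ is its dual (type $(3,3)$).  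The long exact sequence of the pair then yields $H^k(E, \mathbb Q) \cong C^k$ for every $k$, in particular $H^3(E) = 0$.  To see that $C^4 \cong H^4(E)$ is Hodge--Tate of type $(2,2)$, I will compare Hodge numbers using Serre duality on $X$ and the symplectic symmetry $IH^{n-p, q}(X) \cong IH^{n+p, q}(X)$, together with the isomorphisms from the first step, to match the Hodge pieces of $H^4(X) \cong IH^4(X)$ with those of $H^4(\widetilde X)$ off bidegree $(2,2)$.

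The main obstacle will be the case $p = 2$ of the Leray collapse.  The vanishings $R^j\pi_*\Omega^2_{\widetilde X}(\log E)(-E) = 0$ for $j \ge 1$ follow from Corollary \ref{corollary not so important vanishing} and Corollary \ref{corollary extra vanishing} combined with skyscraper support, but the corresponding vanishings for $R^j\pi_*\Omega^2_{\widetilde X}(\log E)$ are not directly provided; obtaining them appears to require the Grothendieck duality identification $\mathbb D_X(\mathbf R\pi_*\Omega^2_{\widetilde X}(\log E)(-E)) \cong \mathbf R\pi_*\Omega^2_{\widetilde X}(\log E)$ together with a careful reflexivity analysis of $\pi_*\Omega^2_{\widetilde X}(\log E)(-E)$ using Ischebeck's lemma, a delicate piece of bookkeeping that dovetails with the $(2,2)$-matching in the final Hodge number comparison.
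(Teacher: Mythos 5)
Your overall architecture is the same as the paper's: prove purity of $H^4(X,\mathbb Q)\cong H^4_c(U,\mathbb Q)$ by checking injectivity on the graded pieces of the Hodge filtration, then feed this into the decomposition theorem and the sequence $\cdots\to H^k_c(U)\to H^k(\widetilde X)\to H^k(E)\to\cdots$ to control $H^k(E,\mathbb Q)$. The genuine gap is exactly where you flag it, at $p=2$, and your proposed fix does not close it. Concretely: $H^2(\widetilde X,\Omega^2_{\widetilde X}(\log E)(-E))=H^2(X,\pi_*\Omega^2_{\widetilde X}(\log E)(-E))$ because all higher direct images of $\Omega^2_{\widetilde X}(\log E)(-E)$ vanish, and the map to $H^2(X,\pi_*\Omega^2_{\widetilde X}(\log E))$ is injective since the cokernel of the sheaf map is a skyscraper; but the Leray edge map $H^2(X,\pi_*\Omega^2_{\widetilde X}(\log E))\to H^2(\widetilde X,\Omega^2_{\widetilde X}(\log E))$ has kernel the image of the differential $d_2$ out of $H^0(X,R^1\pi_*\Omega^2_{\widetilde X}(\log E))$, and no result in the paper controls $R^1\pi_*\Omega^2_{\widetilde X}(\log E)$ (this is the excluded case $p=0$ of Theorem \ref{theorem local and steenbrink vanishing intro}(1) and of Corollary \ref{corollary vanishing higher log E}). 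Your suggested route via $\mathbb D_X(\mathbf R\pi_*\Omega^2_{\widetilde X}(\log E)(-E))\cong\mathbf R\pi_*\Omega^2_{\widetilde X}(\log E)$ only converts the problem into showing $\mathscr{E}xt^1_{\mathscr O_X}(\Omega_X^{[2]},\mathscr O_X)=0$; Ischebeck's lemma (Lemma \ref{lemma dim support ischebeck}) is vacuous here since the sheaf has full support, and reflexivity of $\Omega_X^{[2]}$ only gives depth $\ge 2$, not the maximal Cohen--Macaulay property needed to kill this $\mathscr{E}xt^1$. (A smaller instance of the same issue: for $p=3$ your ``skyscraper cokernel has no higher cohomology'' argument is applied to $H^1$, where $H^0$ of the cokernel can contribute to the kernel; it happens to be harmless only because $\pi_*\Omega^3_{\widetilde X}(\log E)(-E)=\pi_*\Omega^3_{\widetilde X}(\log E)=\Omega_X^{[3]}$.)

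The paper avoids this by never dualizing at the level of $\mathbf R\pi_*$-complexes: it works with local cohomology along $E$, using that $H^q_E(\widetilde X,\Omega^p_{\widetilde X}(\log E))$ is dual to $H^0(X_\Sigma,R^{4-q}\pi_*\Omega^{4-p}_{\widetilde X}(\log E)(-E))$, so the vanishings actually required are the $(-E)$-twisted ones that \emph{are} available (Steenbrink vanishing, Theorem \ref{theorem local and steenbrink vanishing intro}(3), and Corollaries \ref{corollary other important vanishing Q fact term degree n-1} and \ref{corollary not so important vanishing}). This yields surjectivity of $H^{4-p}(\widetilde X,\Omega^p_{\widetilde X}(\log E))\to H^{4-p}(U,\Omega_U^p)$, which combined with the symplectic symmetry $H^{4-p}(U,\Omega_U^p)\cong H^{4-p}(U,\Omega_U^{4-p})$, the decomposition-theorem surjectivity onto $IH^{p,4-p}(X)$, and a dimension count gives surjectivity of $\mathrm{gr}^p_F H^4(\widetilde X,\mathbb C)\to\mathrm{gr}^p_FH^4(U,\mathbb C)$ for $p\ge 1$; the dual statement is the injectivity $\mathrm{gr}^p_FH^4_c(U,\mathbb C)\hookrightarrow\mathrm{gr}^p_FH^4(\widetilde X,\mathbb C)$ you are after, and the remaining piece $\mathrm{gr}^0_FH^3(E,\mathbb C)=H^3(E,\mathscr O_E)=0$ is handled separately. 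If you want to salvage your plan, you should replace the $p=2$ Leray collapse with this local-cohomology duality step; as written, the claimed injectivity for $p=2$ is unsupported.
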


\begin{proof}
For $k \ge 4$, $H^k(E, \mathbb Q)$ carries a pure Hodge structure.  In fact, we have by the decomposition theorem $$H^k(\widetilde X, \mathbb Q) = IH^k(X, \mathbb Q)\oplus V,$$ where $V = H^k(E, \mathbb Q)$ for $k \ge 4$ and $H_k(E, \mathbb Q)$ for $k < 4$.  By duality and Theorem \ref{theorem hodge structure h3}, $IH^5(X, \mathbb Q) \xrightarrow{\sim} H^5(\widetilde X, \mathbb Q)$, whence $H^5(E, \mathbb Q) = 0$.  It is obvious that $H^6(E, \mathbb Q)$ and $H^0(E, \mathbb Q)$ are Hodge-Tate.  We continue on with the rest of the computation.

For a primitive symplectic variety, there is an exact sequence $$0 \to H^1(E, \mathbb Q) \to H^2_c(U, \mathbb Q) \to H^2(\widetilde X, \mathbb Q) \to H^2(E, \mathbb Q) \to H^3_c(U, \mathbb Q) \to H^3(\widetilde X, \mathbb Q),$$  as $H^1(\widetilde X, \mathbb Q) = 0$ by (\ref{equation hodge to de rham X}).  Moreover, $H^k_c(U, \mathbb Q) \xrightarrow{\sim} H^k(X, \mathbb Q)$ for $k \ge 2$ since $X$ has isolated singularities.  This implies we get a short exact sequence $$0 \to H^2_c(U, \mathbb Q) \to H^2(\widetilde X, \mathbb Q) \to H^2(E, \mathbb Q) \to 0,$$ and $H^1(E, \mathbb Q) = 0$.  We also have $H^2(E, \mathbb Q)$ is pure of Hodge-Tate type, since $H_c^2(U, \mathbb C) \to H^2(\widetilde X, \mathbb C)$ is an isomorphism outside the $(1,1)$-part.

Next, we actually show $H^3(E_{(1)}, \mathbb Q) = 0$.  It is classically known $H^{3,0}(E_{(1)}) = 0$, but this easily follows from the fact that $R^{3}\pi_*\mathscr O_{\widetilde X} = 0$.  It is enough to prove that $H^{2,1}(E_{(1)}) = 0$.  If we consider the logarithmic sheaf sequence, we only need to show that
            \begin{enumerate}
                \item $H^1(\widetilde X, \Omega_{\widetilde X}^3) \to H^1(\widetilde X, \Omega_{\widetilde X}^3(\log E))$ is surjective, and
                \item $H^2(\widetilde X, \Omega_{\widetilde X}^3) \to H^2(\widetilde X, \Omega_{\widetilde X}^3(\log E))$ is injective.
            \end{enumerate}
            The second part follows from the fact $H^3(X, \mathbb Q) \to H^3(\widetilde X, \mathbb Q)$ is surjective by (\ref{equation long exact sequence H^3}), which by Poincar\'e duality implies $H^5(\widetilde X, \mathbb Q) \hookrightarrow H^5(U, \mathbb Q)$ since $H_c^3(U, \mathbb Q) \xrightarrow{\sim} H^3(X, \mathbb Q)$.  Therefore, $H^3(U, \mathbb Q)$ carries a pure Hodge structure, and the isomorphism follows from (\ref{equation spectral sequence log regular locus}).  Now we need to prove the first part. Consider the long exact sequence \begin{equation}
            \begin{split}
                 H^1_E(\widetilde X, \Omega_{\widetilde X}^3(\log E)) \to & H^1(\widetilde X, \Omega_{\widetilde X}^3(\log E)) \to H^1(U, \Omega_U^3) \\&\to H^{2}_E(\widetilde X, \Omega_{\widetilde X}^3(\log E)).   
            \end{split}
        \end{equation} We claim that the first and last terms vanish.  Indeed, the groups $H^q_E(\widetilde X, \Omega_{\widetilde X}^p(\log E))$ are dual to $$H^{4-q}(\widetilde X_E, \Omega_{\widetilde X}^{4-p}(\log E)(-E)),$$ where $\widetilde X_E$ is the completion of $X$ along $E$.  Therefore, they can be computed by a spectral sequence $$E_2^{r,s} = H^r(X_\Sigma, R^s\pi_*\Omega_{\widetilde X}^{4-p}(\log E)(-E)) \Rightarrow H^{r+s}(\widetilde X_E, \Omega_{\widetilde X}^{4-p}(\log E)(-E),$$ where $X_\Sigma$ is the completion of $X$ along $\Sigma$.  Since $\dim \Sigma \le 0$, it follows that \begin{equation}\label{equation isomorphism for local cohomology middle}
        H^0(X_\Sigma, R^{4-q}\pi_*\Omega_{\widetilde X}^{4-p}(\log E)(-E))^* \cong H^q_E(\widetilde X, \Omega_{\widetilde X}^p(\log E).\end{equation} But $R^3\pi_*\Omega_{\widetilde X}^1(\log E)(-E) = 0$ by (\ref{equation steenbrink vanishing extended}) and $$R^2\pi_*\Omega_{\widetilde X}^3(\log E)(-E) = 0$$ by Steenbrink vanishing (\ref{equation steenbrink vanishing general}).  Therefore, $H^1(\widetilde X, \Omega_{\widetilde X}^3(\log E)) \xrightarrow{\sim} H^1(U, \Omega_U^3)$. Since $U$ is symplectic, there is an isomorphism $H^1(U, \Omega_U^1) \xrightarrow{\sim} H^1(U, \Omega_U^3)$, and the former group is isomorphic to $IH^{1,1}(X)$.  Therefore, $H^1(U, \Omega_U^3) \cong IH^{3,1}(X)$ by \cite[Theorem 1.2]{tighe2022llv}. But by the decomposition theorem, we also have that $H^{3,1}(\widetilde X) \to IH^{3,1}(X)$ is surjective.  This finishes the proof that $H^3(E_{(1)}, \mathbb Q) = 0$.

Now we prove $H^3(E, \mathbb Q) = 0$.  There is similarly an exact sequence \begin{equation} \label{equation exact sequence H^3 and H^4}
      0 \to H^3(E, \mathbb Q) \to H_c^4(U, \mathbb Q) \to H^4(\widetilde X, \mathbb Q).  
    \end{equation} We do not yet know that $H_c^4(U, \mathbb Q)$ is a pure Hodge structure, but we do understand \textit{part} of the Hodge filtration on its dual $H^4(U, \mathbb Q)$.  Following the identification (\ref{equation isomorphism for local cohomology middle}) a bit further, we see that $$H^{4-p}(\widetilde X, \Omega_{\widetilde X}^p(\log E)) \to H^{4-p}(U, \Omega_U^p)$$ is at least surjective for $p \ge 1$.  Recall that we have seen $H^1(\widetilde X, \Omega_{\widetilde X}^3) \to H^1(U, \Omega_U^1)$ is surjective, and $H^2(\widetilde X, \Omega_{\widetilde X}^2) \to H^2(\widetilde X, \Omega_{\widetilde X}^2(\log E))$ is surjective. Using that $U$ is symplectic again, we also have $H^3(U, \Omega_U^1) \xrightarrow{\sim} H^3(U, \Omega_U^3)$.  By duality, $H^6(U, \mathbb Q)$ carries a pure Hodge structure.  Moreover, $$H^3(\widetilde X, \Omega_{\widetilde X}^3(\log E)) \to H^3(U, \Omega_U^3)$$ is an isomorphism by (\ref{equation isomorphism for local cohomology middle}), using the fact that $R^1\pi_*\Omega_{\widetilde X}^1(\log E)(-E) = 0$ by Theorem \ref{theorem local and steenbrink vanishing intro}. It follows immediately that $H^3(\widetilde X, \Omega_{\widetilde X}^1) \hookrightarrow H^3(U, \Omega_U^1)$, and in fact they are isomorphic since they have the same dimension. Therefore, the morphism $$\mathrm{gr}_p^FH^4(\widetilde X, \mathbb C) \to \mathrm{gr}_p^F H^4(U, \mathbb Q)$$ is surjective for $p \ge 1$, and dually $$\mathrm{gr}_p^F H^4_c(U, \mathbb C) \to \mathrm{gr}_p^F H^4(\widetilde X, \mathbb C)$$ is injective for $p \ge 1$.  It follows that $\mathrm{gr}_p^FH^3(E, \mathbb C) = 0$ for $p \ge 1$.  But $\mathrm{gr}_F^0H^3(E, \mathbb Q) = H^3(E, \mathscr O_E) = 0$, either by our discussion above or classically since $X$ is Cohen-Macaulay, and so $$H^3(E, \mathbb C) = \bigoplus_{p \ge 0} \mathrm{gr}_p^F H^3(E, \mathbb C) = 0.$$  This also proves that $H_c^4(U, \mathbb Q) = H^4(X, \mathbb C)$ carries a pure Hodge structure. 
\end{proof}

\begin{corollary}
If $X$ is a $\mathbb Q$-factorial terminal primitive symplectic 4-fold, then $$H^k(E_{(m)}, \mathbb Q) = 0$$ for odd $k$.
\end{corollary}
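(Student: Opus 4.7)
The approach combines two spectral sequences with the purity results established in the preceding theorem. I will first exploit the Mayer--Vietoris spectral sequence associated to the simple normal crossings divisor $E$,
\[
    E_1^{p,q} = H^q(E_{(p+1)}, \mathbb Q) \Longrightarrow H^{p+q}(E, \mathbb Q),
\]
which is a spectral sequence of mixed Hodge structures. Each $E_1^{p,q}$ is pure of weight $q$ (since $E_{(p+1)}$ is smooth projective), so the higher differentials $d_r$ for $r \ge 2$ vanish by weight considerations, and $E_\infty^{p,q} = \mathrm{gr}_q^W H^{p+q}(E)$. The preceding theorem shows that $H^k(E, \mathbb Q)$ is pure of weight $k$ and vanishes for $k$ odd, which forces $E_2^{p,q} = 0$ for every $p \ge 1$; hence for each odd $q$ one obtains an exact sequence
\[
    0 \to H^q(E_{(1)}, \mathbb Q) \to H^q(E_{(2)}, \mathbb Q) \to H^q(E_{(3)}, \mathbb Q) \to H^q(E_{(4)}, \mathbb Q) \to 0.
\]

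Since $\dim_{\mathbb C} E_{(m)} = 4-m$, one has $H^k(E_{(m)}) = 0$ whenever $k > 2(4-m)$. The $q = 5$ sequence then immediately yields $H^5(E_{(1)}) = 0$, and the $q = 3$ sequence, together with $H^3(E_{(1)}) = 0$ from the preceding theorem, gives $H^3(E_{(2)}) = 0$. The case $q = 1$ reduces to showing $H^1(E_{(2)}, \mathbb Q) = 0$, since then $0 \to H^1(E_{(1)}) \to 0 \to H^1(E_{(3)}) \to 0$ is exact and yields the remaining two vanishings.

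To establish $H^1(E_{(2)}) = 0$ I will invoke the Deligne weight spectral sequence for $U = \widetilde X \setminus E$,
\[
    E_1^{-r, s} = H^{s-2r}(E_{(r)}, \mathbb Q)(-r) \Longrightarrow H^{s-r}(U, \mathbb Q)
\]
(with the convention $E_{(0)} = \widetilde X$), which also degenerates at $E_2$ in the category of mixed Hodge structures and satisfies $E_\infty^{-r, s} = \mathrm{gr}^W_s H^{s-r}(U)$. Because $X$ has isolated singularities, $H^3(U, \mathbb Q) \cong IH^3(X, \mathbb Q)$ is pure of weight $3$, so $\mathrm{gr}^W_5 H^3(U) = E_\infty^{-2, 5} = 0$. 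The $s = 5$ row of the $E_1$-page reads $0 \to H^1(E_{(2)})(-2) \xrightarrow{d_1} H^3(E_{(1)})(-1) = 0$, so $E_2^{-2, 5} = H^1(E_{(2)})(-2)$, and its vanishing forces $H^1(E_{(2)}) = 0$.

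The main obstacle is that $H^1(E_{(1)})$ is not accessible from dimensional bounds; extracting it requires chaining the Mayer--Vietoris and Deligne weight spectral sequences and uses as crucial input the vanishing $H^3(E_{(1)}) = 0$ from the preceding theorem, which is where the $\mathbb Q$-factorial terminal primitive symplectic hypothesis enters.
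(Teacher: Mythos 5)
Your proof is correct, and the first half coincides with the paper's: both arguments run the Mayer--Vietoris spectral sequence of the snc divisor $E$, use the purity and Hodge--Tate property of $H^*(E,\mathbb Q)$ from the preceding theorem to force the odd-degree rows to be exact, and extract $H^5(E_{(1)})=0$ and $H^3(E_{(2)})=0$ exactly as you do (the latter using $H^3(E_{(1)})=0$ from the preceding proof). Where you genuinely diverge is the $q=1$ row. The paper disposes of it by Poincar\'e duality on the smooth projective strata: the components of $E_{(1)}$ are $3$-folds, so $b_1(E_{(1)})=b_5(E_{(1)})=0$, and the components of $E_{(2)}$ are surfaces, so $b_1(E_{(2)})=b_3(E_{(2)})=0$; exactness then kills $H^1(E_{(3)})$. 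You instead feed the single vanishing $H^1(E_{(2)})=0$ into the exact row, and obtain it from the Deligne weight spectral sequence of $U=\widetilde X\setminus E$ together with the purity of $H^3(U)\cong IH^3(X)$ (valid here since the singularities are isolated and $X$ is projective), noting that the relevant $d_1$'s into and out of $E_1^{-2,5}=H^1(E_{(2)})(-2)$ vanish --- the outgoing one precisely because $H^3(E_{(1)})=0$. Both routes are sound; the paper's is more elementary and self-contained, while yours makes visible that the vanishing of $H^1(E_{(2)})$ is forced by the purity of $IH^3(X)$ rather than by duality on the strata, which is a nice conceptual point but costs you an extra spectral sequence. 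One cosmetic remark: once you know $H^1(E_{(2)})=0$ you could equally have quoted Poincar\'e duality for $H^1(E_{(1)})=b_5(E_{(1)})=0$ directly, so the chaining through the exact row is not strictly necessary there.
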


\begin{proof}
This follows from the Hodge theory of $H^k(E, \mathbb Q)$.  Since $E$ has simple normal crossings, the cohomology of $E$ can be computed by a complex $$0 \to H^k(E_{(1)}, \mathbb Q) \to H^k(E_{(2)}, \mathbb Q) \to ... \to H^k(E_{(m)}, \mathbb Q) \to ...$$ In general, the cohomology of this complex computes the weight filtration on $H^k(E, \mathbb Q)$, but in our case these are all pure; in particular, $$b_k(E) = \sum_m(-1)^{m}b_k(E_{(m)}).$$ One quickly checks that $b_5(E_{(1)}) = b_1(E_{(1)}) = 0$, $b_3(E_{(2)}) = b_1(E_{(2)}) = 0$ since we have already shown that $b_3(E) = b_3(E_{(1)}) = 0$, and $b_1(E_{(3)}) = 0$.  
\end{proof}

\begin{remark}
We do not claim that the cohomology groups of the $E_{(m)}$ are Hodge-Tate for $m > 1$.  In fact, it seems necessary that $H^{2,0}(E_{(1)}) \ne 0$ if $E$ has normal crossing singularities.
\end{remark}

\section{Some Results on the Intersection Hodge Module} \label{section intersection module}

\subsection{Symmetries for the intersection Hodge module} Recall that the Du Bois complex is a formal consequence of Deligne's construction of a mixed Hodge structure on the cohomology of $X$.  Saito constructs this data from the perspective of mixed Hodge modules in \cite{saito1990mixed}.  Specifically, there is an object $\mathbb Q_X^H \in D^b\mathrm{MHM}(X)$ in the bounded derived category of mixed Hodge modules on $X$, whose underlying $\mathbb Q$-complex is $\mathbb Q_X$.  It is defined as the pullback $$\mathbb Q_X^H : = r_X^*\mathbb Q_{\mathrm{pt}}$$ where $r_X: X \to \mathrm{pt}$ is the structure morphism. 

Let $\mathcal M_X^\bullet$ be the underlying complex of (left) $\mathscr D_Y$-modules. If $\mathrm{DR}_X(-)$ is the de Rham functor, it follows from \cite[Theorem 4.2]{saito2000mixed} that there is a canonical isomorphism \begin{equation}\label{equation hodge module du bois}
    \underline \Omega_X^p[-p] \xrightarrow{\sim} \mathrm{gr}_F^p\mathrm{DR}(\mathcal M_X^\bullet),
\end{equation} where $F$ is the (Hodge) filtration associated to $\mathbb Q_X^H$.

Being a complex of mixed Hodge modules, the cohomology objects $H^k\mathbb Q_X^H$ come with a weight filtration $W$ such that $\mathrm{gr}_i^WH^k\mathbb Q_X^H$ is a pure Hodge module.  By \cite[(4.5.2)]{saito1990mixed}, the complex $\mathbb Q_X^H$ is of weight $\le 0$, which means $\mathrm{gr}_i^WH^k\mathbb Q_X^H= 0$ for $i > k$.  This allows us to connect $\mathbb Q_X^H$ to the intersection pure Hodge module, denoted by $\mathrm{IC}_X$, which is the unique extension of the variation $\mathbb Q_U[2n]$ of the regular locus $U$.  It is a pure Hodge module of weight $2n$, and we write $\mathcal{IC}_X$ for its underlying $\mathscr D_Y$-module structure with Hodge filtration $F$.  By definition, the cohomology groups $\mathbb H^{k-2n}(X, \mathrm{IC}_X)$ are the intersection cohomology groups $IH^k(X, \mathbb Q)$.  By \cite[(4.5.9)]{saito1990mixed}, we have an identification \begin{equation}
    \mathrm{gr}_{2n}^WH^{2n}\mathbb Q_X^H = \mathrm{IC}_X.
\end{equation} On the holomorphic side, this reads as $\mathrm{DR}(\mathrm{gr}_{2n}^WH^{2n}M_X^\bullet) \xrightarrow{\sim} \mathrm{DR}(\mathcal{IC}_X)$. Since $\mathbb Q_X^H$ is of weight $\le 0$, this provides a natural quotient morphism $\mathbb Q_X^H \to \mathrm{IC}_X[-2n]$ in $D^b\mathrm{MHM}(X)$.  Since $\mathrm{IC}_X$ is a pure Hodge module, it is self dual up to a shift; in particular, this gives the natural factorization \begin{equation}
    \mathbb Q_X^H \to \mathrm{IC}_X[-2n] \to \mathbb D_X(\mathbb Q_X^H)(-2n)[-4n].
\end{equation} 

We conclude with the following theorem:

\begin{theorem} \label{theorem symmetry intersection cohomology}
If $X$ is a symplectic variety of dimension $2n$, then the symplectic form induces quasi-isomorphisms $$L_\sigma^{n-1}: \mathrm{gr}_{-1}^F\mathrm{DR}(\mathcal{IC}_X)[2n-1] \xrightarrow{\sim} \mathrm{gr}_{-(2n-1)}^F\mathrm{DR}(\mathcal{IC}_X)[1].$$
\end{theorem}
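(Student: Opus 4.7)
The plan is to mirror the proof of Theorem \ref{theorem main} with the Grothendieck duality of the Du Bois complex replaced by the self-duality of the intersection pure Hodge module. First, I would construct the map $L_\sigma^p$ itself. On the regular locus $U$, $\mathcal{IC}_X|_U \cong \mathscr{O}_U$ as a Hodge module (up to a shift/twist), and $\mathrm{gr}_{-k}^F\mathrm{DR}(\mathcal{IC}_X|_U) \cong \Omega_U^k[2n-k]$. Wedging with $\sigma^p$ gives the isomorphism $\Omega_U^{n-p}[n+p] \xrightarrow{\sim} \Omega_U^{n+p}[n-p]$. Since $\mathrm{IC}_X$ is characterized by the fact that it has no non-trivial sub or quotient Hodge module supported on $\Sigma$, this morphism extends uniquely along the inclusion $j: U \hookrightarrow X$ to a morphism $L_\sigma^p: \mathrm{gr}_{-(n-p)}^F\mathrm{DR}(\mathcal{IC}_X)[n+p] \to \mathrm{gr}_{-(n+p)}^F\mathrm{DR}(\mathcal{IC}_X)[n-p]$ on $X$.

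Next, I would set up the analogue of the diagram (\ref{equation commutative diagram}). The quotient morphism $\mathbb{Q}_X^H \to \mathrm{IC}_X[-2n]$, combined with the identification (\ref{equation hodge module du bois}), supplies natural maps $\underline \Omega_X^p[-p] \to \mathrm{gr}_{-p}^F\mathrm{DR}(\mathcal{IC}_X)[-2n]$ that are isomorphisms on $U$. Applying the Saito self-duality $\mathbb D_X(\mathrm{gr}_{-k}^F\mathrm{DR}(\mathcal{IC}_X)) \cong \mathrm{gr}_{k-2n}^F\mathrm{DR}(\mathcal{IC}_X)$ (which encodes $\mathbb D_X(\mathrm{IC}_X) \cong \mathrm{IC}_X(2n)$), one dualizes to obtain morphisms $\mathrm{gr}_{-p}^F\mathrm{DR}(\mathcal{IC}_X)[2n-p] \to \mathbb D_X(\underline \Omega_X^{2n-p})$. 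Factoring through $\pi_*\Omega_{\widetilde X}^{n-p}$ and $\pi_*\Omega_{\widetilde X}^{n+p}$, these fit into a commutative diagram whose bottom arrow is the dual of the symplectic isomorphism $\sigma^p: \pi_*\Omega_{\widetilde X}^{n-p} \xrightarrow{\sim} \pi_*\Omega_{\widetilde X}^{n+p}$ of (\ref{equation symplectic symmetry GR differentials}).

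The remainder of the argument is the same truncation and spectral sequence computation used in Steps 1 and 2 of the proof of Theorem \ref{lemma local vanishing isolated singularities}. The cone of each vertical morphism is supported on $\Sigma$, so Lemma \ref{lemma dim support ischebeck} and the spectral sequence (\ref{equation important spectral sequence}) reduce the problem to controlling $R^j\pi_*\Omega_{\widetilde X}^1(\log E)$ in the upper range. The crucial input is the local vanishing $R^j\pi_*\Omega_{\widetilde X}^1(\log E) = 0$ for $j > 1$ established in Theorem \ref{theorem local vanishing general}, which holds for arbitrary symplectic $X$ (not just isolated singularities). Combined with Proposition \ref{proposition reflexivity and symmetry of h^0's} giving $L_\sigma^{n-1}$ an isomorphism on $\mathscr H^0$, the mapping cone vanishes and $L_\sigma^{n-1}$ is a quasi-isomorphism. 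The main obstacle is bookkeeping: matching the shifts, Tate twists, and sign conventions between Saito's self-duality of $\mathcal{IC}_X$ and the Grothendieck duality used on $\underline \Omega_X^\bullet$, and verifying that the two natural extensions of $L_\sigma^p$ (the intersection-module one constructed here and the Du Bois one of (\ref{equation natural symplectic symmetry morphism Du Bois complex})) are compatible under the quotient $\mathbb Q_X^H \to \mathrm{IC}_X[-2n]$, so that the diagram genuinely commutes.
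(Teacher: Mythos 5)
Your overall strategy coincides with the paper's: construct $L_\sigma^p$ by unique extension from $U$ using the strict support condition on $\mathrm{IC}_X$, set up the truncation diagram with the self-duality of $\mathrm{IC}_X$ playing the role that Grothendieck duality plays for $\underline\Omega_X^\bullet$, identify the $\mathscr H^0$'s with $\pi_*\Omega_{\widetilde X}^{n\pm p}$, and run the Ischebeck/spectral-sequence argument to reduce everything to a vanishing statement in the top range of degrees. One small correction: the isomorphism on $\mathscr H^0$ is not Proposition \ref{proposition reflexivity and symmetry of h^0's} (which concerns $\underline\Omega_X^{n\pm p}$ and $\mathbb D_X(\underline\Omega_X^{n\pm p})$); what is needed is the identification $\mathscr H^0\bigl(\mathrm{gr}^F_{-q}\mathrm{DR}(\mathcal{IC}_X)[q-2n]\bigr) \cong \pi_*\Omega_{\widetilde X}^q$, which the paper takes from \cite[Proposition 8.1]{kebekus2021extending} and then combines with (\ref{equation symplectic symmetry GR differentials}).

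The substantive gap is in your last step. After the reduction, what must be shown is that $\mathscr H^j\mathrm{gr}^F_{-1}\mathrm{DR}(\mathcal{IC}_X) = 0$ for $j \ge 0$, i.e.\ $\mathscr H^jK_1 = 0$ for $j \ge 2n-1$ in the paper's normalization $K_q = \mathrm{gr}^F_{-q}\mathrm{DR}(\mathcal{IC}_X)[q-2n]$. You propose to import this from the local vanishing $R^j\pi_*\Omega_{\widetilde X}^1(\log E) = 0$ for $j > 1$ of Theorem \ref{theorem local vanishing general}. But $\mathbf R\pi_*\Omega_{\widetilde X}^1(\log E)$ computes $\mathbb D_X(\underline\Omega_X^{2n-1})$, not $\mathrm{gr}^F_{-1}\mathrm{DR}(\mathcal{IC}_X)$; the two complexes differ by contributions supported on $\Sigma$, and the whole point of stating Theorem \ref{theorem symmetry intersection cohomology} separately from Theorem \ref{theorem main} is that they are different objects. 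To transfer the vanishing you would need a comparison map between them that is injective on cohomology sheaves in degrees $\ge 0$, and producing that injectivity is essentially the problem you are trying to solve. The paper sidesteps this by citing \cite[Proposition 8.4]{kebekus2021extending}, which is a vanishing statement made directly about $\mathrm{gr}^F_{-1}\mathrm{DR}(\mathcal{IC}_X)$. Relatedly, the truncation/Ischebeck step you quote from Theorem \ref{lemma local vanishing isolated singularities} relies on the truncated cohomology sheaves having zero-dimensional support; since the theorem is asserted for arbitrary symplectic $X$, one must first reduce to that situation via the stratification and local product decomposition of Proposition \ref{proposition symstrat}, as in \S\ref{subsection general case}.
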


\begin{proof}
The existence of such a morphism follows from the fact that the symplectic form defines a map $\mathbb C_U[2n] \to \mathbb C_U[2n + 2]$ on the regular locus $U$, since $\mathrm{IC}_X$ is the unique extension of $\mathbb C_U[2n]$ to $X$.

The proof is nearly identical to Theorem \ref{lemma local vanishing isolated singularities}.  For brevity, let $K_q = \mathrm{gr}^F_{-q}\mathrm{DR}(\mathcal{IC}_X)[q-2n]$.  We get the following analogue of (\ref{equation commutative diagram}), using the fact that the complexes $K_{n-p}$ and $K_{n+p}$ are related by duality:

\begin{equation}
\begin{tikzcd}
    \mathbf R\mathscr Hom_{\mathscr O_X}(\tau_{\ge 1}K_{n+p}, \mathscr O_X) \arrow{d} \arrow{r}{(\tau_{\ge 1}L_\sigma^p)^*}~~~~~ & ~~~~~ \mathbf R\mathscr Hom_{\mathscr O_X}(\tau_{\ge 1} K_{n-p}, \mathscr O_X) \arrow{d} \\
    K_{n-p} \arrow{r}{(L_\sigma^p)^*} \arrow{d} & K_{n+p} \arrow{d}\\
    \mathbf R\mathscr Hom_{\mathscr O_X}(\mathscr H^0K_{n+p}, \mathscr O_X) \arrow{r}{(\mathscr H^0L_\sigma^p)^*} & \mathbf R\mathscr Hom_{\mathscr O_X}(\mathscr H^0K_{n-p}, \mathscr O_X),
    \end{tikzcd}
    \end{equation} By \cite[Proposition 8.1]{kebekus2021extending}, $\mathscr H^0K_{q} \cong \pi_*\Omega_{\widetilde X}^{q}$ for each $q$; in particular, $(\mathscr H^0L_\sigma^p)^*$ is a quasi-isomorphism.  By the (semi)perverse data of the Hodge module, also have $\mathscr H^jK_{n+p} = 0$ for $j > n-p$, and a similar argument to the proof of Theorem \ref{theorem main} shows $\mathscr H^jK_{n-p} = 0$ for $n-p < j < n+p$, and $L_\sigma^p$ is a quasi-isomorphism if and only if the vanishing holds for $j \ge n+p$.  For $p = n-1$, this extra vanishing holds by \cite[Proposition 8.4]{kebekus2021extending}.
\end{proof}

By taking hypercohomology, we get the following corollary:

\begin{corollary}
If $X$ is a primitive symplectic variety of dimension $2n$, the symplectic form induces isomorphisms $$IH^{1,q}(X) \xrightarrow{\sim} IH^{2n-1,q}(X)$$ for each $q$.
\end{corollary}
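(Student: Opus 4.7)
The plan is to reduce the corollary to a direct hypercohomology computation from Theorem \ref{theorem symmetry intersection cohomology}.

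First, I would use Saito's theory of pure Hodge modules to identify Hodge components of intersection cohomology with hypercohomology of the graded pieces of $\mathrm{DR}(\mathcal{IC}_X)$. Since $X$ is primitive symplectic, hence compact K\"ahler, $\mathrm{IC}_X$ is a pure Hodge module of weight $2n$ and the Hodge-to-de Rham spectral sequence associated to the Hodge filtration $F$ on $\mathcal{IC}_X$ degenerates at $E_1$. With the indexing conventions of Section \ref{section intersection module}, this yields a canonical identification
\[
IH^{p,\,k+p}(X) \;=\; \mathbb H^{k}\bigl(X, \mathrm{gr}_{-p}^F \mathrm{DR}(\mathcal{IC}_X)\bigr)
\]
for every $p$ and $k$, and the morphisms induced on these hypercohomology groups by $L_\sigma^p$ are morphisms of pure Hodge structures (up to a Tate twist), because $L_\sigma^p$ lifts the wedge with $\sigma^p$ on the open stratum $U$.

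Next, I would apply $\mathbb H^k(X,-)$ to the quasi-isomorphism
\[
L_\sigma^{n-1}\colon \mathrm{gr}_{-1}^F\mathrm{DR}(\mathcal{IC}_X)[2n-1] \xrightarrow{\sim} \mathrm{gr}_{-(2n-1)}^F\mathrm{DR}(\mathcal{IC}_X)[1]
\]
of Theorem \ref{theorem symmetry intersection cohomology}. The cohomological shifts absorb into degree shifts of hypercohomology: the left side becomes $\mathbb H^{k+2n-1}(X, \mathrm{gr}_{-1}^F\mathrm{DR}(\mathcal{IC}_X)) \cong IH^{1,\,k+2n}(X)$ and the right side becomes $\mathbb H^{k+1}(X, \mathrm{gr}_{-(2n-1)}^F\mathrm{DR}(\mathcal{IC}_X)) \cong IH^{2n-1,\,k+2n}(X)$. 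Setting $q := k+2n$, which ranges over all integers as $k$ does, gives the desired isomorphism $IH^{1,q}(X) \xrightarrow{\sim} IH^{2n-1,q}(X)$ for every $q$.

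The main obstacle is essentially bookkeeping: the only point requiring verification is that the sign conventions for the Hodge filtration on $\mathcal{IC}_X$ (reflected in the index $\mathrm{gr}_{-p}^F$) and the resulting shifts in the Hodge-to-de Rham spectral sequence are all consistent with those of the preceding theorem and with the description of $IH^{p,q}(X)$ used throughout Section \ref{subsection primitive symplectic varieties}. Once the shifts and signs are aligned, no further geometric input is needed, since Theorem \ref{theorem symmetry intersection cohomology} already carries the substantive content.
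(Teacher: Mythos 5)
Your proposal is correct and takes essentially the same approach as the paper, whose entire argument is the phrase ``by taking hypercohomology'': you supply exactly the two ingredients that make this work, namely Saito's $E_1$-degeneration identifying the hypercohomology of the graded de Rham pieces with the Hodge pieces of $IH^\bullet(X)$, and the shift bookkeeping. One small caveat: your displayed identification $IH^{p,\,k+p}(X)=\mathbb H^{k}(X,\mathrm{gr}_{-p}^F\mathrm{DR}(\mathcal{IC}_X))$ is not the one forced by the normalization $K_q=\mathrm{gr}_{-q}^F\mathrm{DR}(\mathcal{IC}_X)[q-2n]$ used in the proof of Theorem \ref{theorem symmetry intersection cohomology} (under which $\mathbb H^{j}(X,K_q)=IH^{q,j}(X)$, giving the corollary directly), but this is precisely the convention-alignment you flag, and it does not affect the substance of the argument.
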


\begin{remark}
One of the key features of a compact hyperk\"ahler manifold $X$ is the structure the cohomology ring $H^*(X, \mathbb Q)$ inherits as a $\mathfrak g$-representation, where $\mathfrak g$ is the \textit{total Lie algebra}, or \textit{Looijenga-Lunts-Verbitsky} (LLV) algebra.  By definition, the algebra is generated by operators $$\mathfrak g = \langle L_\omega, \Lambda_\omega, H\rangle$$ where $L_\omega$ is the cupping morphism of a class $\omega \in H^2(X, \mathbb Q)$ which satisfies the Hard Lefschetz theorem $$L_\omega^k: H^{2n-k}(X, \mathbb Q) \xrightarrow{\sim} H^{2n+k}(X, \mathbb Q), \quad 0 \le k \le 2n,$$ and $H = (k-2n)\mathrm{id}$ is the unique weight operator on $H^k(X, \mathbb Q)$ which satisfies $[L_\omega, \Lambda_\omega] = H$.  By \cite{looijenga1997lie} and \cite{verbitsky1996cohomology}, the total Lie algebra takes on the form $$\mathfrak g \cong \mathfrak{so}((H^2(X, \mathbb Q), q_X)\oplus \mathfrak h),$$ where $q_X$ is the BBF-form and $\mathfrak h$ is a copy of the hyperbolic plane.  Both proofs rely on the underlying hyperk\"ahler metric.

The author gave an algebraic proof of this in \cite{tighe2022llv} by showing that the necessary commutator relations can be obtained from the symplectic isomorphisms $L_\sigma^p: H^{n-p,q}(X) \to H^{n+p,q}(X)$.  This gives a blueprint for extending this theorem to the singular setting using \textit{intersection} cohomology.  In particular, the author proves in \cite{tighe2022llv} that the natural extension of the LLV algebra to the intersection cohomology module $IH^*(X, \mathbb Q)$ of a primitive symplectic variety with \textit{isolated} singularities admits the structure $$\mathfrak{g} \cong \mathfrak{so}((IH^2(X, \mathbb Q), Q_X)\oplus \mathfrak h),$$ where $Q_X$ is a natural extension of the BBF form under the inclusion $H^2(X, \mathbb Q) \hookrightarrow IH^2(X, \mathbb Q)$.  The proof uses the fact that the maps $L_\sigma^p: IH^{n-p,q}(X) \to IH^{n+p,q}(X)$ are isomorphisms for each $p$ \cite[Theorem 3.5]{tighe2022llv}, using some global results from the Hodge theory of isolated singularities.  Therefore, Theorem \ref{theorem symmetry intersection cohomology} can be seen as a local version of this proof.
\end{remark}

\subsection{Deformation theory and intersection cohomology} As we pointed out in Remark \ref{remark q factorial deformations}, there is a connection between the Du Bois complex and deformations of symplectic varieties.  The deformation theory of primitive symplectic varieties has been extensively studied by work of Namikawa \cite{namikawa2000deformation}, \cite{namikawa2005deformations} and Bakker-Lehn \cite{bakker2016global}, \cite{bakker2018global}.  To conclude this paper, we give a slightly new perspective involving intersection cohomology, which we believe could be useful in more general settings.

Let $X$ be a primitive symplectic variety with universal deformation space $\mathrm{Def}(X)$.  There is a subspace $\mathrm{Def}^{\mathrm{lt}}(X) \subset \mathrm{Def}(X)$ parametrizing locally trivial deformations of $X$.  The tangent spaces at the reference point are $\mathrm{Ext}^1(\Omega_X^1, \mathscr O_X)$ and $H^1(X, T_X)$, respectively, where $T_X$ is the tangent sheaf of $X$, and the deformation spaces are compatible with the inclusion $H^1(X, T_X) \hookrightarrow \mathrm{Ext}^1(\Omega_X^1, \mathscr O_X)$ obtained by the local-to-global spectral sequence for Ext. The space of locally trivial deformations is unobstructed \cite[Theorem 4.7]{bakker2018global} while the full deformation space is unobstructed if $X$ is terminal \cite[Theorem (2.5)]{namikawa2000deformation}. 

We have seen there is a natural, injective morphism $H^2(X, \mathbb Q) \to IH^2(X, \mathbb Q)$.  Since $T_X \cong \Omega_X^{[1]}$, this gives an injection $$H^1(X, T_X) \hookrightarrow IH^{1,1}(X),$$ using the fact that $T_X \cong \Omega_X^{[1]}$.  We also have the following:

\begin{proposition} \label{proposition deformation intersection cohomology}
Let $X$ be a projective primitive symplectic variety.  There is an injective map $IH^{1,1}(X) \to \mathrm{Ext}^1(\Omega_X^1, \mathcal O_X)$ which depends only on $X$.
\end{proposition}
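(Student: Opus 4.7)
The plan is to build the map via the natural factorization $\underline\Omega_X^p \to K_p \to \mathbb D_X(\underline\Omega_X^{2n-p})$ from (\ref{equation natural morphism from du bois to dual}), where $K_p := \mathrm{gr}_{-p}^F\mathrm{DR}(\mathcal{IC}_X)[2n-p]$ is the $p$-th Hodge graded piece of the intersection complex satisfying $\mathbb H^q(X,K_p) = IH^{p,q}(X)$, and to leverage the symplectic symmetry of Theorem \ref{theorem intersection hodge module} to pass between the $(1,1)$ and $(2n-1,1)$ Hodge types.

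For the construction, I first specialize $p = 2n-1$ in the factorization and use the Gorenstein identification $\mathbb D_X(\underline\Omega_X^1) \cong \mathbf R\mathscr Hom_{\mathcal O_X}(\underline\Omega_X^1, \mathcal O_X)$, so that the induced morphism on $\mathbb H^1$ reads
\[
IH^{2n-1,1}(X) = \mathbb H^1(X, K_{2n-1}) \longrightarrow \mathrm{Ext}^1(\underline\Omega_X^1, \mathcal O_X) \longrightarrow \mathrm{Ext}^1(\Omega_X^1, \mathcal O_X),
\]
where the last arrow is induced by the natural morphism $\Omega_X^1 \to \underline\Omega_X^1$. I then precompose with the inverse of the quasi-isomorphism $L_\sigma^{n-1}\colon K_1 \xrightarrow{\sim} K_{2n-1}$ of Theorem \ref{theorem intersection hodge module} to obtain $IH^{1,1}(X) \cong IH^{2n-1,1}(X) \longrightarrow \mathrm{Ext}^1(\Omega_X^1, \mathcal O_X)$. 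Since every ingredient — $\mathcal{IC}_X$ and its self-duality, the natural morphism from $\mathbb Q_X^H$, and wedging by $\sigma^{n-1}$ — is intrinsic to $X$, the resulting map depends only on $X$.

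Injectivity splits into two pieces. The map $\mathrm{Ext}^1(\underline\Omega_X^1, \mathcal O_X) \to \mathrm{Ext}^1(\Omega_X^1, \mathcal O_X)$ is injective: its kernel is controlled by $\mathrm{Ext}^1(C, \mathcal O_X)$, with $C$ the cone of $\Omega_X^1 \to \underline\Omega_X^1$, and each $\mathscr H^jC$ is supported on $\Sigma$ of codimension at least two; applying Lemma \ref{lemma dim support ischebeck} termwise in the spectral sequence $\mathrm{Ext}^p(\mathscr H^{-q}C, \mathcal O_X) \Rightarrow \mathrm{Ext}^{p+q}(C, \mathcal O_X)$ then forces $\mathrm{Ext}^1(C, \mathcal O_X) = 0$. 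The core task is injectivity of $\mathbb H^1(X, K_{2n-1}) \to \mathrm{Ext}^1(\underline\Omega_X^1, \mathcal O_X)$, which reduces to showing $\mathbb H^0(X, C') = 0$ for the cone $C'$ of $K_{2n-1} \to \mathbb D_X(\underline\Omega_X^1)$. The essential observation is that the induced map on $\mathscr H^0$ is an isomorphism: Kebekus-Schnell gives $\mathscr H^0 K_{2n-1} \cong \pi_*\Omega_{\widetilde X}^{2n-1}$, while $\mathscr H^0 \mathbb D_X(\underline\Omega_X^1) \cong \mathscr Hom_{\mathcal O_X}(\Omega_X^{[1]}, \mathcal O_X) \cong \Omega_X^{[1]}$ via the symplectic contraction $T_X \cong \Omega_X^{[1]}$, and Proposition \ref{proposition reflexivity and symmetry of h^0's} identifies $\Omega_X^{[1]}$ with $\pi_*\Omega_{\widetilde X}^{2n-1}$ compatibly so that the induced map is precisely the reflexive symplectic symmetry.

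The main obstacle I anticipate is controlling the negative cohomology sheaves of $\mathbb D_X(\underline\Omega_X^1)$: using the semi-perversity of $\mathcal{IC}_X$ together with Lemma \ref{lemma dim support ischebeck} applied to the sheaves $\mathscr H^j\underline\Omega_X^1$ (supported on $\Sigma$) for $j > 0$, I expect to conclude that $C'$ is concentrated in strictly positive cohomological degrees, yielding the desired vanishing $\mathbb H^0(X, C') = 0$. This is the step where Theorems \ref{theorem main} and \ref{theorem intersection hodge module} enter essentially: together they identify $K_{2n-1}$ and $\mathbb D_X(\underline\Omega_X^1)$ in such a way that the $\mathscr H^0$-isomorphism propagates, via Grothendieck-Serre duality, into control of the entire cone.
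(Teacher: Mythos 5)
Your construction of the map is essentially the paper's: both exploit the self-duality of $\mathrm{IC}_X$, the identification of the extremal cohomology sheaf of $\mathrm{gr}_{-(2n-1)}^F\mathrm{DR}(\mathcal{IC}_X)$ with $\Omega_X^{[2n-1]}\cong\Omega_X^{[1]}$, and the symplectic symmetry; routing through $\underline\Omega_X^1$ and precomposing with $L_\sigma^{n-1}$ rather than dualizing the inclusion of the lowest cohomology sheaf directly is a cosmetic difference. The injectivity, however, is where your proposal diverges and where it has a genuine gap. The paper's proof of injectivity is \emph{global}: it passes to a $\mathbb Q$-factorial terminalization $\phi\colon Z\to X$, uses Namikawa's finiteness of $\mathrm{Def}(Z)\to\mathrm{Def}(X)$ to identify $H^{1,1}(Z)\cong\mathrm{Ext}^1(\Omega_X^1,\mathscr O_X)$, and then uses the decomposition theorem together with Proposition \ref{proposition Q factorial H^2 IH^2} to embed $IH^{1,1}(X)\hookrightarrow H^{1,1}(Z)$. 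This is also exactly where projectivity is used. Your proposed replacement is a local cone argument, and the two steps you need are not established.

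Concretely: (i) for the injectivity of $\mathrm{Ext}^1(\underline\Omega_X^1,\mathcal O_X)\to\mathrm{Ext}^1(\Omega_X^1,\mathcal O_X)$, the contribution of $\mathscr H^jC$ to $\mathrm{Ext}^1(C,\mathcal O_X)$ is $\mathrm{Ext}^{1+j}(\mathscr H^jC,\mathcal O_X)$, so for $j\ge 1$ (where $\mathscr H^jC\cong\mathscr H^j\underline\Omega_X^1$) the hypothesis ``supported in codimension at least two'' is not enough --- Lemma \ref{lemma dim support ischebeck} only kills $\mathrm{Ext}^p$ for $p$ strictly below the codimension of the support, so you would need $\mathrm{codim}\,\mathrm{Supp}\,\mathscr H^j\underline\Omega_X^1>j+1$, which requires a separate stratum-by-stratum analysis that you have not supplied. (ii) More seriously, $\mathbb H^0(X,C')=0$ does not follow from the $\mathscr H^0$-isomorphism: from the long exact sequence of the cone, $\mathscr H^0C'$ is the kernel of $\mathscr H^1K_{2n-1}\to\mathscr H^1\mathbb D_X(\underline\Omega_X^1)$, and $\mathscr H^1K_{2n-1}$ is typically nonzero (e.g.\ when $X$ admits a symplectic resolution). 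The injectivity of that map in degree $1$, together with the vanishing of the negative cohomology sheaves of $C'$, is precisely the kind of statement the paper only obtains after substantial work (Steenbrink vanishing, isolated singularities) in Theorems \ref{lemma local vanishing isolated singularities} and \ref{theorem symmetry intersection cohomology}, and it is not a formal consequence of Theorem \ref{theorem intersection hodge module}. You flag this step yourself as an anticipated obstacle rather than resolving it, so as written the proof of injectivity is incomplete; I would recommend adopting the global terminalization argument instead.
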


\begin{proof}
First, recall that $\mathscr H^{-1}\mathrm{gr}_{-(2n-1)}^F\mathrm{DR}(\mathcal{IC}_X) \cong \Omega_X^{[2n-1]} \cong \Omega_X^{[1]}$. By duality, this gives a morphism $$\mathrm{gr}_{-1}^F\mathrm{DR}(\mathcal{IC}_X)[2n-1] \to \mathbf R\mathscr Hom_{\mathscr O_X}(\Omega_X^{[1]}, \mathscr O_X[2n]).$$ Taking hypercohomology, we obtain a sequence of morphisms \begin{equation} \label{equation composition ext and intersection cohomology}
 IH^{1,1}(X) \to \mathrm{Ext}^1(\Omega_X^{[1]}, \mathcal O_X) \to \mathrm{Ext}^1(\Omega_X^1, \mathcal O_X)   
\end{equation} which depend only on $X$. To see it is injective, we pass to a $\mathbb Q$-factorial terminalization $\phi: Z \to X$.  By \cite[Theorem 1]{namikawa2005deformations}, there is a finite morphism $\mathrm{Def}(Z) \to \mathrm{Def}(X)$.  Since $Z$ is $\mathbb Q$-factorial terminal, this implies an isomorphism $$H^{1,1}(Z) \xrightarrow{\sim} \mathrm{Ext}^1(\Omega_X^1, \mathscr O_X).$$  On the other hand, the decomposition theorem implies an injection $$IH^{1,1}(X) \hookrightarrow IH^{1,1}(Z) \cong H^{1,1}(Z)$$ by Proposition \ref{proposition Q factorial H^2 IH^2}.  It follows that (\ref{equation composition ext and intersection cohomology}) is injective.
\end{proof}

In particular, we have a factorization \[\begin{tikzcd}
H^1(T_X) \arrow{dr} \arrow[swap]{rr} & & \mathrm{Ext}^1(\Omega_X^1, \mathcal O_X) \\
 & IH^{1,1}(X) \arrow{ur}& 
\end{tikzcd}.
\] There is actually a clear way of interpreting $IH^{1,1}(X)$ in terms of deformations.  Let $\Sigma$ be the singular locus.  There is a locus $\Sigma_0 \subset \Sigma$ such that $\mathrm{codim}_X(\Sigma_0) \ge 4$, and every point of $X\setminus \Sigma_0$ is locally a transversal slice of a rational double point.  By \cite[Lemma 1]{durfee1995intersection}, $IH^2(X, \mathbb Q) \cong IH^2(X\setminus \Sigma_0, \mathbb Q)$.  By definition, $X\setminus \Sigma_0$ is a $\mathbb Q$-homology manifold, and $IH^2(X\setminus \Sigma_0, \mathbb Q) \cong H^2(X\setminus \Sigma_0, \mathbb Q)$.  In particular, $H^2(X\setminus \Sigma_0, \mathbb Q)$ carries a pure Hodge structure, and in fact $$H^{1,1}(X\setminus \Sigma_0, \mathbb C) = H^1(X\setminus \Sigma_0, T_{X\setminus \Sigma_0}).$$  In particular, $IH^{1,1}(X)$ parametrizes the deformations of $X$ which are locally trivial in codimension 2.  Therefore:

\begin{enumerate}
    \item $IH^{1,1}(X) = \mathrm{Ext}^1(\Omega_X^1, \mathscr O_X)$ if $X$ is terminal.

    \item $H^{1,1}(X) \cong IH^{1,1}(X)$ if $X$ is $\mathbb Q$-factorial terminal (this is a different way of looking at Proposition \ref{proposition Q factorial H^2 IH^2}).  
\end{enumerate} As a consequence of Theorem \ref{theorem symmetry intersection cohomology} and the above perspective, we get the following curious condition related to Remark \ref{remark q factorial deformations}:

\begin{corollary}
Let $X$ be a primitive symplectic variety of dimension $2n$.  If $$\mathscr H^{-(2n-2)}\mathrm{gr}_{-1}^F\mathrm{DR}(\mathcal{IC}_X) = 0,$$ then the deformations of $X$ are locally trivial. 
\end{corollary}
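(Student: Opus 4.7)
The plan is to use Theorem~\ref{theorem symmetry intersection cohomology} to collapse the complex $\mathrm{gr}_{-1}^F\mathrm{DR}(\mathcal{IC}_X)$ to a single sheaf under the hypothesis, then translate this into an identification of deformation-theoretic tangent spaces via Proposition~\ref{proposition deformation intersection cohomology}.

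First I will translate the vanishing hypothesis into the language used in the proof of Theorem~\ref{theorem symmetry intersection cohomology}. Let $K_1 = \mathrm{gr}_{-1}^F\mathrm{DR}(\mathcal{IC}_X)[1-2n]$. That proof shows $K_1$ is concentrated in cohomological degrees $\{0,1\}$, with $\mathscr H^0 K_1 \cong \pi_*\Omega_{\widetilde X}^1 = \Omega_X^{[1]}$ by the Kebekus--Schnell identification. The remaining sheaf $\mathscr H^1 K_1$ coincides, up to the defining shift $[1-2n]$, with $\mathscr H^{-(2n-2)}\mathrm{gr}_{-1}^F\mathrm{DR}(\mathcal{IC}_X)$, which vanishes by hypothesis. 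Consequently $K_1$ is quasi-isomorphic to the reflexive sheaf $\Omega_X^{[1]}$ placed in degree $0$.

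Second, I would take hypercohomology. Since $K_1$ is now a sheaf, the Leray spectral sequence for hypercohomology collapses and the corollary to Theorem~\ref{theorem symmetry intersection cohomology} gives
\[
IH^{1,q}(X) \cong H^q(X, \Omega_X^{[1]}) \cong H^q(X, T_X)
\]
for every $q$, where the last identification uses $T_X \cong \Omega_X^{[1]}$ via the symplectic form. Specializing to $q=1$, this forces the natural inclusion $H^{1,1}(X) = H^1(X, T_X) \hookrightarrow IH^{1,1}(X)$ coming from the decomposition theorem to be an isomorphism. Third, I will combine this with Proposition~\ref{proposition deformation intersection cohomology}: the injection $IH^{1,1}(X) \hookrightarrow \mathrm{Ext}^1(\Omega_X^1, \mathscr O_X)$ factors through $H^1(X, T_X)$, so the equality above identifies the tangent space of $\mathrm{Def}^{\mathrm{lt}}(X)$ with the image of $IH^{1,1}(X)$ in $\mathrm{Ext}^1(\Omega_X^1, \mathscr O_X)$. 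The remaining identification $IH^{1,1}(X) = \mathrm{Ext}^1(\Omega_X^1, \mathscr O_X)$ needed to close the argument follows from the deformation-theoretic discussion before the corollary, once one checks that the hypothesis rules out contributions to $\mathscr H^1 K_1$ arising from codim-$2$ ADE strata of $X$ --- in which case the identification for terminal varieties applies. Combining yields $H^1(X, T_X) = \mathrm{Ext}^1(\Omega_X^1, \mathscr O_X)$, and unobstructedness of $\mathrm{Def}^{\mathrm{lt}}(X)$ (Bakker--Lehn) promotes this infinitesimal equality to a full identification of deformation functors.

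The main obstacle is the codim-$2$ analysis in the final step: extracting the identification $IH^{1,1}(X) = \mathrm{Ext}^1(\Omega_X^1, \mathscr O_X)$ from the sheaf-level hypothesis requires showing that $\mathscr H^1 K_1$ is nonzero along any codim-$2$ singular stratum, which in turn requires a K\"unneth-type description of $\mathrm{gr}_{-1}^F\mathrm{DR}(\mathcal{IC}_X)$ around generic codim-$2$ points via the local product decomposition of Proposition~\ref{proposition symstrat}. The first two steps, by contrast, are formal consequences of Theorem~\ref{theorem symmetry intersection cohomology} together with the hypercohomology spectral sequence.
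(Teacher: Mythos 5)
Your first two steps reproduce the paper's argument exactly: Theorem \ref{theorem symmetry intersection cohomology} confines the cohomology sheaves of $\mathrm{gr}_{-1}^F\mathrm{DR}(\mathcal{IC}_X)$ to degrees $-(2n-1)$ and $-(2n-2)$, the hypothesis kills the latter, the complex collapses to $\pi_*\Omega_{\widetilde X}^1 \cong \Omega_X^{[1]} \cong T_X$ via \cite[Proposition 8.1]{kebekus2021extending}, and taking hypercohomology gives $H^{1,1}(X) = H^1(X, T_X) \xrightarrow{\sim} IH^{1,1}(X)$. That is in fact where the paper's own proof stops: it reads the conclusion off the discussion preceding the corollary, in which $IH^{1,1}(X)$ is interpreted as the space of first-order deformations that are locally trivial in codimension $2$.

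The genuine gap is your third step, which you have correctly located but not closed. To conclude that \emph{all} deformations are locally trivial one needs the second arrow in $H^1(T_X)\hookrightarrow IH^{1,1}(X)\hookrightarrow\mathrm{Ext}^1(\Omega_X^1,\mathscr O_X)$ to be surjective, and the paper only establishes that when $X$ is terminal. Your proposed repair --- showing that the hypothesis forces $\mathscr H^1K_1$ to be nonzero along every codimension-$2$ ADE stratum, so that such strata are excluded and the terminal case applies --- cannot work. Carrying out the K\"unneth computation you suggest via Proposition \ref{proposition symstrat}: near a generic point of a codimension-$2$ stratum one has $X\cong S\times\Delta$ with $S$ an ADE surface germ and $\Delta$ smooth of dimension $2n-2$, and since $\mathrm{gr}_{-p}^F\mathrm{DR}(\mathcal{IC}_S)\cong\Omega_S^{[p]}[2-p]$ is a single sheaf for a quotient surface singularity, the external product gives $\mathrm{gr}_{-1}^F\mathrm{DR}(\mathcal{IC}_X)\cong\bigl(\Omega_S^{[1]}\boxtimes\mathscr O_\Delta\bigr)[2n-1]\oplus\bigl(\mathscr O_S\boxtimes\Omega_\Delta^1\bigr)[2n-1]$. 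Both summands sit in degree $-(2n-1)$, so $\mathscr H^{-(2n-2)}\mathrm{gr}_{-1}^F\mathrm{DR}(\mathcal{IC}_X)$ vanishes identically along codimension-$2$ strata: the hypothesis carries no information about exactly the strata that account for the difference between $IH^{1,1}(X)$ and $\mathrm{Ext}^1(\Omega_X^1,\mathscr O_X)$. A symmetric product of a K3 surface, where the hypothesis holds but $h^1(T_X)<\mathrm{ext}^1(\Omega_X^1,\mathscr O_X)$, is the test case to check against whichever form of the statement you end up proving.
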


\begin{proof}
By Theorem \ref{theorem symmetry intersection cohomology}, the only non-trivial cohomology sheaves $\mathscr H^j\mathrm{gr}_{-1}^F\mathrm{DR}(\mathcal{IC}_X)$ are for $j = -(2n-1)$ and $-(2n-2)$.  If the latter vanishes, then $$\mathrm{gr}_{-1}^F\mathrm{DR}(\mathcal{IC}_X) \cong_{\mathrm{qis}} \mathscr H^{-(2n-1)}\mathrm{gr}_{-1}^F\mathrm{DR}(\mathcal{IC}_X) \cong \pi_*\Omega_{\widetilde X}^1,$$ the last isomorphism being \cite[Proposition 8.1]{kebekus2021extending}, and $IH^{1,1}(X) \cong H^{1,1}(X)$.  This proves the claim.
\end{proof}

It would be interesting to know if there is a similar condition involving intersection cohomology and the deformation theory of Calabi-Yau varieties.

\bibliography{bib.bib}
\bibliographystyle{alpha}

\end{document}